\documentclass[smallextended,referee,envcountsect]{svjour3}
\usepackage [latin1]{inputenc}
\usepackage{amsmath,amssymb}
\usepackage{marvosym}
\usepackage{graphicx}
\usepackage{subcaption}
\usepackage{lipsum}
\usepackage{epstopdf}
\usepackage[numbers,sort&compress]{natbib}
\usepackage[colorlinks,linkcolor=blue,urlcolor=blue,citecolor=blue]{hyperref}

\usepackage{algorithm}
\usepackage{algorithmic}
\usepackage{float}

\usepackage{xcolor}

\smartqed
\usepackage{graphicx}
\journalname{}

\usepackage{fancyhdr}
\usepackage{booktabs}

\usepackage{ntheorem}
\theoremheaderfont{\bfseries\upshape}
\theorembodyfont{\upshape}
\renewtheorem{remark}{\it Remark}[section]
\renewtheorem{example}{Example}[section]
\renewtheorem{property}{Property}[section]
\renewtheorem{proposition}{Proposition}[section]
\renewtheorem{lemma}{Lemma}[section]

\newtheorem*{theorem*}{Theorem}

\begin{document}

\title{Inertial Primal-Dual Dynamics Methods Featuring Implicit Hessian-Driven	Damping for Convex Optimization Problems in Continuous and Discrete Time}

\author{Xiangkai Sun$^{1,2}$ \and Zeying Gao$^{1,2}$ \and Liang He$^{3}$ \and Kok Lay Teo$^{4}$}

\institute{Xiangkai Sun  (\Letter) \at{\small sunxk@ctbu.edu.cn } \\
	\\ Zeying Gao \at {\small gaozeying666@163.com}\\
	\\ Liang He \at{\small liangheee@126.com}\\
	\\ Kok Lay Teo \at{\small K.L.Teo@curtin.edu.cn}\\\\
               $^{1}$School of Mathematics and Statistics,
 Chongqing Technology and Business University,
Chongqing 400067, China\\
\\
$^{2}$Chongqing Key Laboratory of Statistical Intelligent Computing and Monitoring,
 Chongqing Technology and Business University,
Chongqing 400067, China\\
\\
$^{3}$Department of Mathematics, Sichuan University, Chengdu 610065, China\\
\\
$^{4}$School of Mathematical Sciences, Sunway University, 47500 Bandar Sunway, Selangor, Malaysia
}

\date{Received:   / Accepted:  }

\maketitle

\begin{abstract}
 This paper investigates inertial primal-dual dynamics with implicit Hessian-driven damping for strongly convex optimization problems with linear equality constraints. We first establish fast convergence rates for the objective function value error, the feasibility measure, and both the trajectory and its corresponding velocity vector. By appropriately adjusting parameters, we show that the proposed system achieves exponential convergence rates. Through implicit time discretization of the dynamical system, we derive an inertial accelerated primal-dual algorithm for solving the strongly convex optimization problems. Using Lyapunov-based method, we show that the proposed algorithm achieves convergence rates consistent with those of its continuous-time counterpart. We also extend the obtained results to non-smooth convex optimization case. Furthermore, we conduct numerical experiments to illustrate theoretical results.
\end{abstract}
\keywords{Primal-dual dynamics \and Convergence rate \and Inertial accelerated algorithm \and Convex optimization}

\subclass{90C25 \and 37N40 \and 34D05}

\section{Introduction}
Let $\mathbb{R}^n$ be an $n$-dimensional Euclidean space and let $f: \mathbb{R}^n \rightarrow \mathbb{R}$ be a differentiable convex function. Consider the following convex optimization problem with linear equality constraints:
\begin{equation}\label{prob}
\left\{ \begin{array}{cc}
		\min\limits_{x}&{f(x)} \\
        s.t.& Ax=b,
	\end{array}
	\right.
\end{equation}
where $A: \mathbb{R}^{n} \rightarrow \mathbb{R}^{m}$ is a linear operator and $b\in \mathbb{R}^{m}$. As a fundamental model, Problem \eqref{prob} has been used in various fields, such as image recovery \cite{image}, machine learning \cite{machine}, and transportation \cite{trans}.

Inertial primal-dual dynamics method \cite{zeng2023, bot2021}, as a powerful approach for solving Problem \eqref{prob}, has attracted considerable attention from several different perspectives. One of the main reasons is that appropriate time discretization of the dynamical system enables the design of new numerical algorithms with superior performance, thereby providing valuable insights for algorithm design. In order to solve Problem \eqref{prob}, Zeng et al. \cite{zeng2023} first proposed the following primal-dual dynamical system with vanishing damping:
\begin{align} \label{zeng}
	\left\{ \begin{array}{l}
		\ddot{x}(t)+ \frac{\alpha}{t} \dot{x}(t) + \nabla_x {\mathcal{L}}_\rho \big(x(t), \lambda(t) + \theta t \dot{\lambda}(t) \big)=0,\\
		\ddot{\lambda}(t)+ \frac{\alpha}{t} \dot{\lambda}(t) - \nabla_\lambda {\mathcal{L}}_\rho \big(x(t)+ \theta t \dot{x}(t), \lambda(t)\big)=0,
	\end{array}
	\right.
\end{align}
where $\alpha > 0$, $\theta = \max \left\{ \frac{1}{2}, \frac{3}{2\alpha} \right\}$, and $\mathcal{L}_\rho: \mathbb{R}^{n} \times \mathbb{R}^{m}\rightarrow \mathbb{R}$  is the augmented Lagrangian function of Problem \eqref{prob}. When $\alpha > 3$ and $\theta = \frac{1}{2}$, they proved that the primal-dual gap and the feasibility measure of System \eqref{zeng} is $\mathcal{O}(\frac{1}{t^2})$ and $\mathcal{O}(\frac{1}{t})$, respectively. Subsequently, by applying a time discretization to System \eqref{zeng}, Bo\c{t} et al. \cite{bot2023} proposed a fast augmented Lagrangian algorithm for solving Problem \eqref{prob} and established an $\mathcal{O}(\frac{1}{k^2})$ convergence rate for the primal-dual gap, the objective function value error, and the feasibility measure. By incorporating a time-scaling coefficient $\beta(t)$ into System \eqref{zeng}, Hulett and Nguyen \cite{hul2023} improved the convergence rates from $\mathcal{O}(\frac{1}{t^2})$ to $\mathcal{O}(\frac{1}{t^2 \beta(t)})$. Replacing the asymptotic vanishing damping with fixed viscous damping and incorporating a time-scaling coefficient into System \eqref{zeng}, Ding et al. \cite{ding} developed an inertial algorithm for solving Problem \eqref{prob} via an implicit time discretization of this system. They also obtained exponential convergence rates for the primal-dual gap, the objective function value error, and the feasibility measure, without requiring the strong convexity assumption. Moreover, to solve separable convex optimization problems, He et al. \cite{sepa2021} proposed a primal-dual dynamical system with time-dependent positive damping terms and established convergence rates for the proposed system under different choices of the damping coefficients. When the objective function of Problem \eqref{prob} has a ``smooth + nonsmooth" composite structure, Zhang et al. \cite{ZH} proposed a new inertial accelerated primal-dual algorithm via the time discretization of a second-order differential system. For further research on System \eqref{zeng} and its generalizations from both continuous and discrete time perspectives, see \cite{admm, he2022, zhu2025, heo, heo2} and the references therein.

Recently, to reduce the numerical and time discretization complexity inherent in second-order dynamical systems, and to improve computational efficiency, He et al. \cite{HHF2022} proposed the following ``second-order primal" + ``first-order dual" dynamical system:
\begin{eqnarray} \label{2+1}
	\begin{split}
		\left\{ \begin{array}{l}
			\ddot{x}(t)+ \alpha\dot{x}(t) + \gamma(t) \nabla_x {\mathcal{L}}_\rho \big(x(t), \lambda(t)\big)=0,\\
			\dot{\lambda}(t) - \gamma(t) \nabla_\lambda {\mathcal{L}}_\rho \big(x(t) + \theta \dot{x}(t), \lambda(t)\big)=0,
		\end{array}
		\right.
	\end{split}
\end{eqnarray}
where $\alpha,~\theta > 0$ and $\gamma: [t_0,+\infty) \rightarrow (0,+\infty)$ is a time-scaling function. It should be noted that, in System \eqref{2+1}, the inertia term is constructed only for the primal variable. When $\lim \limits_{t \to +\infty} \gamma(t) = +\infty$, they obtained that the asymptotic convergence rate of the primal-dual gap and the feasibility measure along the trajectory generated by System \eqref{2+1} are $\mathcal{O}(\frac{1}{\gamma(t)})$ and $\mathcal{O}\big(\frac{1}{\sqrt{\gamma(t)}}\big)$, respectively. By replacing $\alpha$ with $\frac{\alpha}{t}$ and using Nesterov's acceleration technique, He et al. \cite{he2022l} proposed a fast primal-dual algorithm with nonergodic convergence rates. Under suitable parameter conditions, they proved that the objective function value error and the feasibility measure of the proposed algorithm achieve a rate of $\mathcal{O}(\frac{1}{k^{\alpha-1}})$. For Problem \eqref{prob} with a separable structure, Sun et al. \cite{ZLJ} introduced a Tikhonov regularized ``second-order primal" + ``first-order dual" dynamical system. Using Lyapunov-based analysis, they proved that the fast convergence rates of the system and the strong convergence of the generated trajectory to the minimal-norm solution of Problem \eqref{prob}. By incorporating Tikhonov regularization technique into Nesterov-type primal-dual algorithms, Zhu et al. \cite{zhul} proposed a fast primal-dual algorithm with Tikhonov regularization for solving Problem \eqref{prob}. When the Tikhonov regularization coefficient tends slowly to zero, they established the strong convergence of the primal sequence to the minimal-norm solution of Problem \eqref{prob}. In particular, when the objective function $f$ of Problem \eqref{prob} is a composite convex function, He et al. \cite{he2026} proposed an accelerated primal-dual algorithm based on implicit-explicit time discretization of the corresponding ``second-order primal" + ``first-order dual" differential inclusion system. Under suitable assumptions, they obtained the convergence rates for the primal-dual gap, the objective function value error, and the feasibility measure of the proposed algorithm. For further convergence results on ``second-order primal" + ``first-order dual" dynamical systems and related discrete algorithms, see \cite{mix, hex2025, bat2025, zhu2026}.

Notably, oscillation phenomena frequently occur during the iteration of the considered dynamical systems and their discrete algorithms. To address this issue, in the context of unconstrained optimization problems, many scholars have considered introducing dynamical systems that incorporate explicit Hessian-driven damping terms \cite{alvarez2002second, attouch2016hes, bot2021hes, firstorder, jano, heavy}. However, when using standard explicit or implicit discretization of the dynamical system with explicit Hessian-driven damping, Nesterov-type inertial algorithms cannot be naturally derived. To overcome this limitation, inertial dynamical systems with implicit Hessian-driven damping terms have attracted growing interest across various disciplines, see, e.g. \cite{alecsa2021extension, attouch2021effect, c25, gzy}. Compared to unconstrained optimization problems, there appear to be only a few papers in the literature devoted to the study of inertial primal-dual dynamical systems with Hessian-driven damping for Problem \eqref{prob}. More precisely, He et al. \cite{hehes} proposed the following ``second-order primal" + ``first-order dual" dynamical system with explicit Hessian-driven damping:
\begin{eqnarray} \label{pdhes}
	\left\{ \begin{array}{l}
		\ddot{x}(t)+ \frac{\alpha}{t} \dot{x}(t) + \beta(t) \frac{d}{dt}{\nabla_x \mathcal{L}\big(x(t), \lambda(t)\big)} + \gamma(t)\nabla_x \mathcal{L} \big(x(t), \lambda(t)\big)=0,\\
		\dot{\lambda}(t) - \eta(t) \nabla_\lambda \mathcal{L} \big( x(t)+\frac{t}{\alpha-1} \dot{x}(t), \lambda(t) \big) = 0,
	\end{array}
	\right.
\end{eqnarray}
where $\alpha > 1$, $\beta,
\gamma, \eta: [t_0,+\infty) \rightarrow (0,+\infty)$ are time-scaling functions, and $\mathcal{L}: \mathbb{R}^{n} \times \mathbb{R}^{m}\rightarrow \mathbb{R}$ is the Lagrangian function of Problem \eqref{prob}. They showed that the fast convergence rates of the primal-dual gap, the objective function value error, and the feasibility measure along the trajectory of System \eqref{pdhes} are of order $\mathcal{O}(\frac{1}{t\eta(t)})$. By applying an implicit time discretization of System \eqref{pdhes}, they derived the corresponding algorithm and proved that the proposed algorithm achieves convergence rates that match those of the continuous-time counterpart. Csetnek and L\'{a}szl\'{o} \cite{cehes} studied a second-order primal-dual dynamical system with explicit Hessian-driven damping and Tikhonov regularization, establishing strong convergence of its trajectories to the minimal-norm solution of Problem \eqref{prob}. Sun et al. \cite{hl} introduced a dynamical system equipped with explicit Hessian-driven damping and Tikhonov regularization to solving convex-concave bilinear saddle-point problems. Li et al. \cite{lhl} introduced a second-order dynamical system with implicit Hessian-driven damping. They investigated the convergence properties of this system and demonstrated that this system can effectively reduce oscillations and offer broader applicability, as it does not require the objective function to be twice differentiable.

From what was mentioned above, for solving Problem \eqref{prob}, we see that primal-dual dynamical systems equipped with implicit Hessian-driven damping have garnered far less research attention than their counterparts with explicit Hessian-driven damping. In fact, compared with its explicit counterpart, the implicit Hessian-driven damping avoids the explicit computation and storage of the Hessian matrix, while still effectively reducing oscillations. This makes it particularly attractive for large-scale optimization problems where second-order information is costly or unavailable. This is the main motivation for investigating dynamical systems with implicit Hessian-driven damping to solve Problem \eqref{prob} in this paper.

Motivated by the works in \cite{he2026, lhl}, this paper investigates inertial primal-dual dynamics with implicit Hessian-driven damping for solving Problem \eqref{prob}, where the objective function $f$ is a $\mu$-strongly convex with constant $\mu > 0$ and $\nabla f$ is $L$-Lipschitz continuous with constant $L>0$. To this end, we first introduce a ``second-order primal" + ``first-order dual" dynamical system with implicit Hessian-driven damping as follows:
\begin{align}\label{system}
	\left\{
	\begin{array}{ll}
		\ddot{x}(t) + 2\sqrt{\mu}\dot{x}(t) + \nabla_x \mathcal{L}\big(z(t), \lambda(t)\big)=0, \\ [2mm]
		\dot{\lambda}(t) - \eta(t)\nabla_\lambda \mathcal{L}\big(z(t) + \frac{1}{\sqrt{\mu}}\dot{z}(t), \lambda(t) \big) = 0,
	\end{array}
	\right.
\end{align}
where $z(t) = x(t)+\beta \dot{x}(t)$, $\beta \geq 0$, and $2\sqrt{\mu}$ is a constant viscous damping coefficient, $\frac{1}{\sqrt{\mu}}$ is a constant extrapolation coefficient, $\mathcal{L}: \mathbb{R}^{n} \times \mathbb{R}^{m}\rightarrow \mathbb{R}$ is the Lagrangian function of Problem \eqref{prob} (see \eqref{lglr} for details), $\eta: [t_0,+\infty) \rightarrow (0,+\infty)$ is a non-decreasing differentiable time-scaling function and $\lim \limits_{t \to +\infty} \eta(t) = +\infty$. The term $\nabla_x \mathcal{L}(z(t),\lambda(t))$ in \eqref{system} is referred to as implicit Hessian-driven damping. This is justified by applying the Taylor expansion to $\nabla_x \mathcal{L}(\cdot,\lambda(t))$, which yields
\begin{align*}
	\nabla_x \mathcal{L}(z(t),\lambda(t)) \approx \nabla_x \mathcal{L}(x(t),\lambda(t))+\beta \frac{d}{dt} \nabla_x \mathcal{L}(x(t),\lambda(t)).
\end{align*}
Subsequently, we establish several new asymptotic properties of System \eqref{system}. Moreover, by employing an implicit time discretization of System \eqref{system}, we construct a corresponding inertial accelerated algorithm for Problem \eqref{prob}, under the assumption that the objective function is strongly convex. We also provide a convergence analysis for this inertial accelerated algorithm. The main contributions of this paper are summarized as follows:
\begin{description}
	\item[\textbullet] \textbf{The continuous time level:}
	\textup{We propose a novel ``second-order primal" + ``first-order dual" dynamical system with implicit Hessian-driven damping to solve Problem \eqref{prob}. Compared with explicit Hessian-driven damping \cite{hehes, cehes}, the implicit formulation eliminates the need to store and compute second-order gradient information. This significantly reduces computational cost and broadens applicability, while fully preserving the oscillation-reduction effect. Using the $\mu$-strong convexity of $f$, we establish an $\mathcal{O}\left(\frac{1}{\sqrt{\eta(t)}}\right)$ convergence rate for the objective function value error, the feasibility measure, and both the trajectory and its corresponding velocity vector. We also demonstrate that our method achieves exponential convergence when the time-scaling factor grows exponentially. Moreover, we show that our approach can naturally be extended to the investigation of non-smooth convex optimization problems, extending the convergence rate results from unconstrained optimization problems \cite{firstorder, attouch2021effect} to linearly equality constrained ones.}
	
	\item[\textbullet] \textbf{The discrete time level:} \textup{We develop an inertial primal-dual algorithm via the implicit time discretization of System \eqref{system}. Few, if any, accelerated algorithms have been proposed from the perspectives of discreting primal-dual dynamical systems with implicit Hessian-driven damping, although some studies have explored algorithms derived from systems with explicit Hessian-driven damping or without damping \cite{bot2023, ding, hehes}. To the best of our knowledge, this is the first algorithm derived from a primal-dual dynamical system with implicit Hessian-driven damping. Under some mild conditions on the parameters, we show that our algorithm achieves fast convergence properties that match those of the continuous-time dynamics, thereby overcoming the common difficulty that an improper discretization scheme may lead to a loss of convergence.}
\end{description}

The rest of the paper is organized as follows. In Section 2, we recall some basic notions and present several preliminary results. In Section 3, we first investigate the asymptotic properties of System \eqref{system}. Then, we extend these methods to solve non-smooth convex optimization problems. In Section 4, we first introduce a primal-dual algorithm derived from time discretization of Syetem \eqref{system}. Then, we analyze the convergence behaviors of the proposed algorithm. In Section 5, we present numerical experiments to illustrate theoretical results.

\section{Preliminaries}

Throughout this paper, let $\mathbb{R}^n$ be an $n$-dimensional Euclidean space with inner product $\langle \cdot,\cdot \rangle$ and norm $\lVert \cdot \rVert$. The notation ${L}_{loc}^{1}([ t_0,+\infty))$ denotes the family of locally integrable functions. Let $\psi: \mathbb{R}^{n} \rightarrow \mathbb{R}$ be a continuous differentiable function. For any $\mu>0$, we say that $\psi$ is a $\mu $-strongly convex function iff $ \psi -\frac{\mu}{2}\|\cdot\|$ is a convex function. We say that $\psi$ is gradient Lipschitz continuous (or simply $L$-smooth) iff there exists a constant $L > 0$ such that $\|\nabla \psi(x) - \nabla \psi(y)\| \leq L\|x-y\|$, for all $x, y \in \mathbb{R}^{n}$. Clearly, for the $L$-smooth and $\mu$-strongly convex function $\psi$, it holds that
\begin{equation}\label{L}
	\frac{\mu}{2}\|y-x\|^2 \leq \psi(y) - \psi(x) - \langle \nabla \psi(x), y - x \rangle \leq \frac{L}{2} \|y - x\|^{2},~\forall x, y \in \mathbb{R}^{n}.
\end{equation}
The Lagrangian function associated with Problem  \eqref{prob} is defined as:
\begin{align}\label{lglr}
	\mathcal{L} (x, \lambda) = f(x) + \langle \lambda, Ax-b \rangle.
\end{align}
Let $\Omega$ denote the saddle point set of Problem \eqref{prob}. Then, $(x^*, \lambda^*)\in \Omega$ if and only if
\begin{eqnarray} \label{kkt}
	\left\{ \begin{array}{ll}
		\nabla_x \mathcal{L}( x^*,\lambda^*) = \nabla f(x^*) + A^\top\lambda^* = 0,\\
		\nabla_\lambda \mathcal{L}( x^*,\lambda^*) = Ax^* - b = 0.
	\end{array}
	\right.
\end{eqnarray}

In what follows, we assume that $\Omega \neq\emptyset$. Then, the solution set $S$ of Problem \eqref{prob} is nonempty. It is well-known that $x^*\in S$ if and only if there exists a solution $\lambda^*$ to Problem \eqref{prob} such that $(x^*, \lambda^*)\in\Omega$. This means that
$(x^*, \lambda^*) \in\Omega $ if and only if
\begin{equation*}
	\mathcal{L}(x^*, \lambda) \leqslant \mathcal{L}( x^*, \lambda^*) \leqslant \mathcal{L}(x, \lambda^*),~\forall (x, \lambda) \in \mathbb{R}^{n} \times \mathbb{R}^{m}.
\end{equation*}

\begin{lemma}\cite[Lemma 6]{he2022l} \label{yinli}
	Suppose that $t_0 > 0$, $g: [t_0, +\infty) \to \mathbb{R}^n$ is a continuous function, $h: [t_0, +\infty) \to [0, +\infty)$ is a continuous function, and $C \geq 0$. Let
	\begin{equation*}
		\left\| g(t) + \int_{t_0}^t h(s)g(s) \, ds \right\| \leq C, ~\forall t \geq t_0.
	\end{equation*}
	Then, $\sup_{t \geq t_0} \| g(t) \| \leq 2C$.
\end{lemma}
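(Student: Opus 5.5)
Set $G(t) := \int_{t_0}^t h(s)g(s)\,ds$. This is a $C^1$ function with $\dot G(t) = h(t)g(t)$. The hypothesis says $\|g(t) + G(t)\| \le C$. The plan is to bound $\|G\|$ by $C$ via a differential inequality for $G$, and then recover $\|g\| \le \|g+G\| + \|G\| \le 2C$.

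Write $g(t) = u(t) - G(t)$, where $u(t) := g(t)+G(t)$ is continuous with $\|u(t)\|\le C$. Then
\begin{equation*}
\dot G(t) = h(t)u(t) - h(t)G(t).
\end{equation*}
This linear ODE is solved with the integrating factor $e^{H(t)}$, where $H(t) := \int_{t_0}^t h(s)\,ds \ge 0$ is non-decreasing. We get $\frac{d}{dt}\big(e^{H(t)}G(t)\big) = e^{H(t)}h(t)u(t)$. Since $G(t_0)=0$, this gives
\begin{equation*}
G(t) = e^{-H(t)}\int_{t_0}^t e^{H(s)}h(s)u(s)\,ds .
\end{equation*}

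Taking norms and using $h\ge 0$ and $\|u\|\le C$:
\begin{equation*}
\|G(t)\| \le C e^{-H(t)}\int_{t_0}^t e^{H(s)}\dot H(s)\,ds = C e^{-H(t)}\big(e^{H(t)}-1\big) = C\big(1-e^{-H(t)}\big) \le C .
\end{equation*}
Hence $\|g(t)\| \le \|u(t)\| + \|G(t)\| \le 2C$ for every $t\ge t_0$, and the supremum bound follows.

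The only step needing any care is the integrating-factor identity. It is justified because $h$ and $g$ are continuous, so $G$ and $H$ are $C^1$ and the fundamental theorem of calculus applies. The nonnegativity of $h$ is used essentially in the norm estimate, because it makes $e^{H(s)}h(s)\|u(s)\|\le C\,e^{H(s)}h(s)$ with a nonnegative weight. There is no real obstacle beyond this.
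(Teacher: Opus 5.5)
Your proof is correct. The integrating-factor identity $\frac{d}{dt}\bigl(e^{H(t)}G(t)\bigr)=e^{H(t)}h(t)\bigl(g(t)+G(t)\bigr)$, together with $G(t_0)=0$ and $h\ge 0$, gives $\|G(t)\|\le C\bigl(1-e^{-H(t)}\bigr)\le C$, and hence $\|g(t)\|\le 2C$. The paper does not prove this lemma but cites it from He et al.\ (Lemma 6 there), and your integrating-factor argument is, as far as I recall, essentially the standard proof behind that result.
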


\begin{lemma}\cite[Lemma 4]{he2022l} \label{yinli2}
	Let $\{r_{k}\}_{k \geq 1}$ be a sequence in $\mathbb{R}^n$, $\{b_{k}\}_{k \geq 1}$ be a sequence in $[0,1)$, and $C \geq 0$. Suppose that
	\[
	\left\| r_{k+1} + \sum_{i=1}^k b_i r_i \right\| \le C, ~\forall k \ge 1.
	\]
	Then, $\underset{k \geq 1}{\sup} \|r_k\| < \|r_1\| + 2C$.
\end{lemma}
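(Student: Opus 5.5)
The plan is to turn the hypothesis into a one-step recursion for a shifted sequence whose norm cannot grow beyond a fixed level. Set
\[
a_k := r_{k+1}+\sum_{i=1}^k b_i r_i \quad (k\ge 1),\qquad a_0:=r_1 ,
\]
so that $\|a_k\|\le C$ for $k\ge1$. No bound on $a_0$ is claimed.

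\textbf{Step 1 (recursion).} Subtracting consecutive $a_k$ gives, for $k\ge2$,
\[
a_k-a_{k-1}=r_{k+1}-r_k+b_kr_k ,\qquad\text{so}\qquad r_{k+1}=(1-b_k)r_k+a_k-a_{k-1}.
\]
For $k=1$ we have $r_2=a_1-b_1r_1$. With the convention $a_0=r_1$ this is the same formula, since $(1-b_1)r_1+a_1-r_1=a_1-b_1r_1$.

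\textbf{Step 2 (shifted sequence).} Define $v_k:=r_k-a_{k-1}$ for $k\ge1$. Then $v_1=0$, and Step 1 gives
\[
v_{k+1}=(1-b_k)r_k-a_{k-1}=(1-b_k)v_k-b_ka_{k-1}.
\]
At $k=1$ this reads $v_2=-b_1r_1$, so $\|v_2\|=b_1\|r_1\|$. For $k\ge2$ we have $\|a_{k-1}\|\le C$ and $b_k\in[0,1)$, hence
\[
\|v_{k+1}\|\le(1-b_k)\|v_k\|+b_kC\le\max\{\|v_k\|,C\}.
\]
By induction, $\|v_k\|\le M:=\max\{b_1\|r_1\|,\,C\}$ for all $k\ge2$. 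Consequently, for $k\ge2$,
\[
\|r_k\|\le\|v_k\|+\|a_{k-1}\|\le M+C .
\]

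\textbf{Step 3 (strictness).} For $k=1$ we have $\|r_1\|<\|r_1\|+2C$ whenever $C>0$. For $k\ge2$ we compare $M+C$ with $\|r_1\|+2C$, which reduces to comparing $M$ with $\|r_1\|+C$:
\begin{itemize}
\item if $M=C$, strictness requires $r_1\neq0$;
\item if $M=b_1\|r_1\|$, then $b_1<1$ makes the inequality strict unless $r_1=0$ and $C=0$.
\end{itemize}
In either case, $\sup_k\|r_k\|\le M+C<\|r_1\|+2C$ as soon as $(r_1,C)\neq(0,0)$; when $r_1=0$ the bound for $k=1$ is $0<2C$, which holds because then $C>0$.

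\textbf{Main obstacle: the degenerate case.} The hard point is exactly the case $r_1=0$, $C=0$. Then $a_k=0$ for all $k\ge1$, and induction gives $r_k=0$ for every $k$. In that case the supremum equals $\|r_1\|+2C=0$, so the strict inequality as worded cannot hold there. I would therefore state the lemma with the (trivial) exclusion $(r_1,C)\neq(0,0)$. In that form the argument above yields the strict bound $\sup_k\|r_k\|<\|r_1\|+2C$. In the remaining case one still has the non-strict bound $\le$, which is all that is used in later applications.
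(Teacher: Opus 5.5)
Your Steps 1--2 are correct, and they follow the usual argument for this lemma. The paper gives no proof of its own, only the citation. Your $v_k$ equals $-\sum_{i=1}^{k-1}b_ir_i$. The recursion $v_{k+1}=(1-b_k)v_k-b_ka_{k-1}$ then says that these partial sums are convex combinations of $b_1r_1$ and vectors of norm at most $C$, which is the discrete analogue of the integrating-factor proof of Lemma \ref{yinli}. This yields $\sup_{k}\|r_k\|\le\max\{\|r_1\|,\;C+\max\{b_1\|r_1\|,C\}\}\le\|r_1\|+2C$, and that bound is all Theorem \ref{lisan} uses.

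Step 3, however, is wrong. The exclusion $(r_1,C)\neq(0,0)$ does not rescue the strict inequality, and your conclusion contradicts your own first bullet.

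\textbf{Case $C=0$, $r_1\neq0$.} The term $k=1$ already gives $\sup_k\|r_k\|\ge\|r_1\|=\|r_1\|+2C$.

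\textbf{Case $r_1=0$, $C>0$.} Here $M=C$, so your bound is $M+C=2C=\|r_1\|+2C$, with no strictness. Your remark $\|r_1\|=0<2C$ only covers $k=1$, and for $k\ge2$ the supremum can actually reach $2C$. For example, take $n=1$, $C=1$, $r_1=0$, $b_k=1-\frac{1}{k}$. Define recursively $r_{k+1}:=(-1)^k-S_k$, where $S_k:=\sum_{i=1}^kb_ir_i$. Then:
\begin{itemize}
\item the hypothesis holds with equality;
\item $|S_k|\le1$ for all $k$;
\item $S_{k+1}-(-1)^k=\frac{1}{k+1}\big(S_k-(-1)^k\big)$.
\end{itemize}
Hence $|r_{k+2}|=|(-1)^k+S_{k+1}|\ge2-\frac{2}{k+1}$, so $\sup_k|r_k|=2=|r_1|+2C$.

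So the strict form is guaranteed only when $r_1\neq0$ and $C>0$. In that case your Steps 1--2 already give it, since each quantity inside the maximum above ($\|r_1\|$, $C+b_1\|r_1\|$, $2C$) is then strictly below $\|r_1\|+2C$. In general the lemma holds only with $\le$, and it should be stated that way.
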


\begin{lemma} \label{muml} \cite[Proposition 1]{firstorder}
	Suppose that $f: \mathbb{R}^{n} \rightarrow \mathbb{R} \cup \{+\infty\}$ is a proper lower semi-continuous convex function. Then, for any $\gamma > 0$ and $\mu > 0$, it holds that
	\begin{equation*}
		f \text{ is } \mu \text{-strongly convex} \implies f_\gamma \text{ is strongly convex with modulus } \frac{\mu}{1 + \gamma \mu}.
	\end{equation*}
\end{lemma}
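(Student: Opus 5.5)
The statement to prove is Lemma \ref{muml}: if $f$ is $\mu$-strongly convex, then its Moreau envelope $f_\gamma(x)=\min_{y}\{f(y)+\frac{1}{2\gamma}\|x-y\|^2\}$ is strongly convex with modulus $\frac{\mu}{1+\gamma\mu}$. I will argue through convex conjugates, because the Moreau envelope becomes a sum on the conjugate side:
\begin{equation*}
f_\gamma=\Big(f^*+\frac{\gamma}{2}\|\cdot\|^2\Big)^*,
\end{equation*}
which is the infimal convolution of $f$ with $\frac{1}{2\gamma}\|\cdot\|^2$. Throughout, $f$ is proper, lower semicontinuous and convex, so $f^{**}=f$ and all conjugates below are well behaved.

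\textbf{Step 1 (strong convexity of $f$ gives smoothness of $f^*$).} Write $f=g+\frac{\mu}{2}\|\cdot\|^2$ with $g$ convex. The standard duality between strong convexity and smoothness then shows that $f^*$ is differentiable and $\nabla f^*$ is $\frac1\mu$-Lipschitz. Equivalently, $h:=\frac{1}{2\mu}\|\cdot\|^2-f^*$ is convex.

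\textbf{Step 2 (add the quadratic and pass back).} Set $\varphi:=f^*+\frac{\gamma}{2}\|\cdot\|^2$. Its gradient is $\nabla f^*+\gamma I$, which is Lipschitz with constant $\frac1\mu+\gamma=\frac{1+\gamma\mu}{\mu}$. The converse direction of the same duality says that a convex function with $\ell$-Lipschitz gradient has a $\frac1\ell$-strongly convex conjugate. Applying this to $\varphi$ shows that $f_\gamma=\varphi^*$ is strongly convex with modulus $\frac{\mu}{1+\gamma\mu}$, which is the claim.

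The main obstacle is justifying the smoothness/strong-convexity duality in both directions for extended-valued $f$. I would avoid it with a direct, self-contained argument.
\begin{itemize}
\item For the gradient of the envelope: $\nabla f_\gamma(x)=\frac{1}{\gamma}(x-p(x))$ with $p=\mathrm{prox}_{\gamma f}$. Equivalently, $\nabla f_\gamma(x)\in\partial f(p(x))$ and $x=p(x)+\gamma\nabla f_\gamma(x)$.
\item For two points $x_1,x_2$, strong monotonicity of $\partial f$ gives
\begin{equation*}
\langle u_1-u_2,\,p_1-p_2\rangle\ge \mu\|p_1-p_2\|^2,\qquad u_i=\nabla f_\gamma(x_i),\ p_i=p(x_i).
\end{equation*}
\item Substituting $x_1-x_2=(p_1-p_2)+\gamma(u_1-u_2)$ yields
\begin{equation*}
\langle u_1-u_2,\,x_1-x_2\rangle\ge \mu\|p_1-p_2\|^2+\gamma\|u_1-u_2\|^2 .
\end{equation*}
\item With $a=\|p_1-p_2\|$, $c=\|u_1-u_2\|$ and $d=\|x_1-x_2\|\le a+\gamma c$, it remains to show $\mu a^2+\gamma c^2\ge \frac{\mu}{1+\gamma\mu}\,d^2$. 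This follows from the weighted Cauchy--Schwarz inequality
\begin{equation*}
(a+\gamma c)^2\le\Big(\frac1\mu+\gamma\Big)(\mu a^2+\gamma c^2).
\end{equation*}
\item Hence $\nabla f_\gamma$ is $\frac{\mu}{1+\gamma\mu}$-strongly monotone, which is equivalent to $\frac{\mu}{1+\gamma\mu}$-strong convexity of the differentiable convex function $f_\gamma$.
\end{itemize}
This monotonicity route is the one I would actually write out.
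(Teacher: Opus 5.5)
The paper gives no argument of its own for this lemma. It quotes the result from \cite[Proposition 1]{firstorder} and uses it as a black box, so there is no in-paper proof to match.

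Your proof is correct, and the monotonicity route you plan to write out is complete. The inclusion $u_i=\frac1\gamma(x_i-p_i)\in\partial f(p_i)$ is the optimality condition of the prox subproblem. The identity $\partial f=\partial g+\mu\,\mathrm{Id}$ gives the strong monotonicity estimate. The substitution $x_1-x_2=(p_1-p_2)+\gamma(u_1-u_2)$ and the weighted bound $(a+\gamma c)^2\le(\frac1\mu+\gamma)(\mu a^2+\gamma c^2)$ are right. Finally, $m$-strong monotonicity of $\nabla f_\gamma$ gives $m$-strong convexity, because $f_\gamma-\frac m2\|\cdot\|^2$ then has a monotone gradient.

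Compared with your conjugate sketch, this route needs only Moreau's formula $\nabla f_\gamma=\frac1\gamma(\mathrm{Id}-\mathrm{prox}_{\gamma f})$. The conjugate version instead needs the strong-convexity/smoothness duality in both directions, plus $(f_\gamma)^*=f^*+\frac\gamma2\|\cdot\|^2$ and $f_\gamma^{**}=f_\gamma$. In exchange, it is shorter once those textbook facts are granted. It also explains the constant most transparently: the gradient Lipschitz constants $\frac1\mu$ and $\gamma$ simply add on the dual side.

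The decomposition $f=g+\frac\mu2\|\cdot\|^2$ you open Step 1 with already finishes the proof by completing the square. This needs no differentiability, prox map, or conjugacy:
\[
f_\gamma(x)=g_{\gamma/(1+\gamma\mu)}\Big(\frac{x}{1+\gamma\mu}\Big)+\frac{\mu}{2(1+\gamma\mu)}\|x\|^2 .
\]
Here the first term is convex in $x$, since it is the Moreau envelope of the proper lower semicontinuous convex function $g$ composed with a linear map. Taking $g=0$ shows that the modulus $\frac{\mu}{1+\gamma\mu}$ cannot be improved.
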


In the sequel, we establish the existence and uniqueness theorem for global strong solutions to System \eqref{system}.

\begin{theorem} \label{unique}
	Suppose that $\nabla f$ is $L$-Lipschitz continuous on $\mathbb{R}^n$ with $L>0$. Let $\eta (t)\in {L}_{loc}^{1}([ t_0,+\infty))$. Then, for initial conditions $(x(t_0),\dot{x}(t_0), \lambda(t_0)) \in \mathbb{R}^n \times \mathbb{R}^n \times \mathbb{R}^m$, System \eqref{system} admits a unique global strong solution.
\end{theorem}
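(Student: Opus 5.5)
The plan is to rewrite System \eqref{system} as a first-order system $\dot U(t)=F(t,U(t))$ in a suitable phase space and then invoke the Cauchy--Lipschitz--Picard theorem in its Carath\'eodory form, which allows locally integrable time dependence. The only obstacle is that $\dot z(t)=\dot x(t)+\beta\ddot x(t)$ contains $\ddot x$. So I will first eliminate $\ddot x$ using the primal equation, and then check that the resulting vector field is globally Lipschitz in the state with an $L^1_{loc}$ Lipschitz modulus.

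\textbf{Step 1 (reformulation).} Set $u(t)=\dot x(t)$ and $U=(x,u,\lambda)\in\mathbb{R}^n\times\mathbb{R}^n\times\mathbb{R}^m$. Since $\nabla_x\mathcal{L}(z,\lambda)=\nabla f(z)+A^\top\lambda$ and $\nabla_\lambda\mathcal{L}(w,\lambda)=Aw-b$, the primal equation reads
\begin{equation*}
\dot u=-2\sqrt{\mu}\,u-\nabla f(x+\beta u)-A^\top\lambda .
\end{equation*}
This gives
\begin{equation*}
\dot z=u+\beta\dot u=(1-2\beta\sqrt{\mu})u-\beta\big(\nabla f(x+\beta u)+A^\top\lambda\big).
\end{equation*}
Substituting into the dual equation yields
\begin{equation*}
\dot\lambda=\eta(t)\Big(A\big(x+\beta u+\tfrac{1}{\sqrt{\mu}}\big[(1-2\beta\sqrt{\mu})u-\beta(\nabla f(x+\beta u)+A^\top\lambda)\big]\big)-b\Big).
\end{equation*}
So the system becomes $\dot U=F(t,U)$, where $F(t,U)=(u,\ \dot u\text{-expression},\ \dot\lambda\text{-expression})$. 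I will also verify the converse: any absolutely continuous solution of this first-order system, with $x\in C^1$ and $\dot x=u$ absolutely continuous, is a strong solution of \eqref{system}, and vice versa. This makes the two notions of solution coincide.

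\textbf{Step 2 (Lipschitz and integrability estimates).} The maps $(x,u)\mapsto\nabla f(x+\beta u)$ and $(x,u,\lambda)\mapsto A(\cdot)$, $A^\top\lambda$ are globally Lipschitz; the first has constant $L(1+\beta)$ and the linear maps have constants $\|A\|$, $\|A\|^2$. Hence there is a constant $K$, depending on $\mu,\beta,L,\|A\|$, such that
\begin{equation*}
\|F(t,U)-F(t,V)\|\le K\,(1+\eta(t))\,\|U-V\| .
\end{equation*}
Moreover, $\|F(t,U)\|\le K(1+\eta(t))(1+\|U\|)$, using $\|\nabla f(w)\|\le\|\nabla f(0)\|+L\|w\|$. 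Because $\eta\in L^1_{loc}([t_0,+\infty))$, both $K(1+\eta(\cdot))$ and $t\mapsto F(t,U)$ lie in $L^1_{loc}$ for each fixed $U$. Measurability in $t$ and continuity in $U$ are clear.

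\textbf{Step 3 (conclusion).} By the Cauchy--Lipschitz--Picard theorem for Carath\'eodory vector fields with locally integrable Lipschitz constant (as in Haraux, \emph{Syst\`emes dynamiques dissipatifs}, Prop.\ 6.2.1), for every initial datum $(x(t_0),\dot x(t_0),\lambda(t_0))$ there is a unique absolutely continuous solution on every interval $[t_0,T]$. Uniqueness follows from the global Lipschitz bound via Gronwall's inequality with the integrable weight $K(1+\eta)$. Global existence (no blow-up) follows from the linear growth bound and Gronwall:
\begin{equation*}
\|U(t)\|\le\Big(\|U(t_0)\|+\int_{t_0}^t K(1+\eta)\Big)\exp\Big(\int_{t_0}^tK(1+\eta)\Big),
\end{equation*}
which is finite on bounded intervals. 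The main point needing care is Step 1: the implicit dependence of $\dot\lambda$ on $\ddot x$ must be resolved algebraically before applying the theorem. After that substitution, the remaining estimates are routine.
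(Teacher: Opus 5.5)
Your proposal is correct and follows essentially the same route as the paper's Appendix proof. You eliminate $\ddot x$ from $\dot z$ via the primal equation, rewrite the system as a first-order Cauchy problem in $(x,\dot x,\lambda)$ (your vector field coincides with the paper's $G(t,Z)$), establish an $L^1_{loc}$ Lipschitz modulus and a linear growth bound of the form $K(1+\eta(t))$, and conclude with Haraux's Proposition 6.2.1. The only differences are cosmetic: the paper writes the constants $K(t)$ and $P(t)$ out explicitly, while you absorb them into a generic constant and add an explicit Gronwall argument for non-blow-up.
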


For the detailed proof of Theorem \ref{unique}, we refer the reader to the Appendix.

\section{The Continuous-Time Dynamics Approach}

In this section, by using Lyapunov-based analysis, we investigate the asymptotic properties of System \eqref{system}. Specifically, we establish fast convergence rates for the objective function value error, the feasibility measure, and both the trajectory and its corresponding velocity vector.

\begin{proposition} \label{prop}
	Let $(x,\lambda ):[{t}_{0},+\infty )\rightarrow \mathbb{R}^{n}\times \mathbb{R}^{m}$ be a solution of System \eqref{system} and let $({x}^{*}, {\lambda }^{*})\in \Omega $. The function $W: [t_0, +\infty) \rightarrow [0, +\infty)$ is defined as follows:
	\begin{equation*}
		\begin{split}
			W(t) := \mathcal{L}(x(t)+\beta\dot{x}(t), \lambda^*)-\mathcal{L}(x^*, \lambda^*) + \frac{1}{2}\|\sqrt{\mu}(x(t)-x^*)+\dot{x}(t)\|^2 + \frac{\mu}{4}\|x(t)-x^{*}\|^2.
		\end{split}
	\end{equation*}
	Then, for any $t \geq t_0$, it holds that
	\begin{equation} \label{hinequ}
		\begin{split}
			\dot{W}(t) + \frac{\sqrt{\mu}}{2} W(t) \leq &~ -\frac{\beta}{2} \|\nabla_x \mathcal{L}(x(t)+\beta\dot{x}(t), \lambda(t))\|^2 - \frac{\sqrt{\mu}}{4}(3- 2\beta\sqrt{\mu} - 3\beta^2\mu) \|\dot{x}(t)\|^2 \\
			& - \frac{\sqrt{\mu}}{\eta(t)} \langle \lambda(t)-\lambda^{*}, \dot{\lambda}(t) \rangle.
		\end{split}
	\end{equation}
\end{proposition}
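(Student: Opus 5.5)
The plan is a direct Lyapunov computation. Differentiate $W$ along the trajectory, replace $\ddot{x}$ using the first equation of System \eqref{system}, replace the multiplier coupling using the second equation, and bound the remaining primal terms with the strong convexity and smoothness inequality \eqref{L}. Throughout, write $z(t)=x(t)+\beta\dot{x}(t)$ and use the KKT relations \eqref{kkt}, which give $\nabla_x\mathcal{L}(z,\lambda^*)=\nabla_x\mathcal{L}(z,\lambda)-A^\top(\lambda-\lambda^*)$ and $Ax^*=b$.

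The derivative has three pieces.
\begin{itemize}
\item[(i)] The first term of $W$ contributes $\langle \nabla_x\mathcal{L}(z,\lambda^*),\dot z\rangle$, with $\dot z=\dot x+\beta\ddot x$.
\item[(ii)] The second term contributes $\langle \sqrt{\mu}(x-x^*)+\dot x,\ \sqrt{\mu}\dot x+\ddot x\rangle$. Substituting $\ddot x=-2\sqrt{\mu}\dot x-\nabla_x\mathcal{L}(z,\lambda)$ turns the right factor into $-\sqrt{\mu}\dot x-\nabla_x\mathcal{L}(z,\lambda)$.
\item[(iii)] The last term contributes $\frac{\mu}{2}\langle x-x^*,\dot x\rangle$.
\end{itemize}
In (i) I substitute $\ddot x$ in the same way, so that $\dot z=(1-2\beta\sqrt{\mu})\dot x-\beta\nabla_x\mathcal{L}(z,\lambda)$. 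After this substitution, the term $-\beta\langle\nabla_x\mathcal{L}(z,\lambda),\nabla_x\mathcal{L}(z,\lambda)\rangle$ is the source of the $-\frac{\beta}{2}\|\nabla_x\mathcal{L}(z,\lambda)\|^2$ on the right of \eqref{hinequ}. The other half of $\beta\|\nabla_x\mathcal{L}(z,\lambda)\|^2$ is spent by Young's inequality on the cross term $\beta(1+\beta\sqrt{\mu})\langle \nabla_x\mathcal{L}(z,\lambda),\dot x\rangle$-type contributions. Those Young remainders produce the $\beta\sqrt{\mu}$ and $\beta^2\mu$ pieces of the coefficient $\frac{\sqrt{\mu}}{4}(3-2\beta\sqrt{\mu}-3\beta^2\mu)$.

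For the primal part, I split each gradient into $\nabla_x\mathcal{L}(z,\lambda^*)$ plus the multiplier part $A^\top(\lambda-\lambda^*)$. The decisive primal term is $-\sqrt{\mu}\langle x-x^*,\nabla_x\mathcal{L}(z,\lambda^*)\rangle$, which I rewrite as $-\sqrt{\mu}\langle z-x^*,\nabla_x\mathcal{L}(z,\lambda^*)\rangle+\beta\sqrt{\mu}\langle\dot x,\nabla_x\mathcal{L}(z,\lambda^*)\rangle$. Applying \eqref{L} to the convex function $\mathcal{L}(\cdot,\lambda^*)$ between $z$ and $x^*$ bounds the first part by
\[
-\sqrt{\mu}\bigl(\mathcal{L}(z,\lambda^*)-\mathcal{L}(x^*,\lambda^*)\bigr)-\frac{\mu\sqrt{\mu}}{2}\|z-x^*\|^2 .
\]
This yields the decay of the first component of $W$ at rate $\sqrt{\mu}$, which is more than the required $\frac{\sqrt{\mu}}{2}$. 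The surplus, together with $-\frac{\mu\sqrt{\mu}}{2}\|z-x^*\|^2$ expanded as $\|x-x^*+\beta\dot x\|^2$, then controls $\frac{\sqrt{\mu}}{2}\bigl(\frac12\|\sqrt{\mu}(x-x^*)+\dot x\|^2+\frac{\mu}{4}\|x-x^*\|^2\bigr)$. I expand that quadratic form explicitly in $x-x^*$ and $\dot x$. The $\langle x-x^*,\dot x\rangle$ cross terms coming from (ii), from (iii) and from the expansion are expected to cancel exactly; this is the reason for the weight $\frac{\mu}{4}$. What remains is a negative multiple of $\|\dot x\|^2$, to be compared with the stated coefficient.

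The main obstacle is the multiplier coupling. Collecting all terms involving $\lambda-\lambda^*$ gives $-\langle\lambda-\lambda^*,A(\dot z+\sqrt{\mu}(x-x^*)+\dot x)\rangle$. On the other hand, by $Ax^*=b$ the dual equation gives
\[
\frac{\sqrt{\mu}}{\eta}\dot\lambda=A\bigl(\sqrt{\mu}(z-x^*)+\dot z\bigr).
\]
I would match these two expressions to produce exactly $-\frac{\sqrt{\mu}}{\eta(t)}\langle\lambda-\lambda^*,\dot\lambda\rangle$. The bookkeeping must be arranged so that no unmatched $\langle\lambda-\lambda^*,A\dot x\rangle$ term survives. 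Since $\sqrt{\mu}(x-x^*)+\dot x=\sqrt{\mu}(z-x^*)+(1-\beta\sqrt{\mu})\dot x$, the two expressions differ by $(1-\beta\sqrt{\mu})\langle\lambda-\lambda^*,A\dot x\rangle$. My first attempt is to choose how $\nabla_x\mathcal{L}(z,\lambda^*)$ versus $\nabla_x\mathcal{L}(z,\lambda)$ is used in each of (i) and (ii) so that this term cancels. If it does not cancel, the term cannot be absorbed by Young's inequality, because $\lambda$ is not a priori bounded. In that case the Lyapunov function, or the precise form of the dual extrapolation, would have to be re-examined before the quadratic-form estimate above is finished.
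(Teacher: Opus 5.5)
Your route is the paper's route: differentiate $W$, substitute $\ddot x$, apply strong convexity of $\mathcal{L}(\cdot,\lambda^*)$ between $z$ and $x^*$, add $\frac{\sqrt{\mu}}{2}W$, and match with the dual equation. However, the proposal stops at the one step that actually has to be checked, and the obstruction you raise there is spurious.

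The leftover $(1-\beta\sqrt{\mu})\langle\lambda-\lambda^*,A\dot x\rangle$ comes from incomplete bookkeeping. By writing all of (ii) with $\nabla_x\mathcal{L}(z,\lambda^*)$, you leave the primal part holding $-\langle\dot x,\nabla_x\mathcal{L}(z,\lambda^*)\rangle$, and the convexity step adds $+\beta\sqrt{\mu}\langle\dot x,\nabla_x\mathcal{L}(z,\lambda^*)\rangle$. These must be converted back to $\nabla_x\mathcal{L}(z,\lambda)$ anyway, since only $-\beta\|\nabla_x\mathcal{L}(z,\lambda)\|^2$ can pay for Young's inequality. That conversion produces exactly $+(1-\beta\sqrt{\mu})\langle\lambda-\lambda^*,A\dot x\rangle$.

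The paper avoids the detour by splitting only $-\sqrt{\mu}\langle x-x^*,\nabla_x\mathcal{L}(z,\lambda)\rangle$ in (ii). It keeps $-\langle\dot x,\nabla_x\mathcal{L}(z,\lambda)\rangle$ intact, so that it merges with $(1-2\beta\sqrt{\mu})\langle\nabla_x\mathcal{L}(z,\lambda),\dot x\rangle$ from (i). The multiplier terms are then
\[
-\langle\lambda-\lambda^*,A\dot z\rangle-\sqrt{\mu}\langle\lambda-\lambda^*,A(x-x^*)\rangle-\beta\sqrt{\mu}\langle\lambda-\lambda^*,A\dot x\rangle=-\langle\lambda-\lambda^*,A(\dot z+\sqrt{\mu}(z-x^*))\rangle .
\]
By your own formula for $\frac{\sqrt{\mu}}{\eta}\dot\lambda$, this is exactly $-\frac{\sqrt{\mu}}{\eta(t)}\langle\lambda-\lambda^*,\dot\lambda\rangle$. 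So no change to $W$ or to the dual extrapolation is needed.

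The coefficient bookkeeping also needs correcting. The net cross term is $-\beta\sqrt{\mu}\langle\nabla_x\mathcal{L}(z,\lambda),\dot x\rangle$: the coefficients are $1-2\beta\sqrt{\mu}$ from (i), $-1$ from (ii), and $+\beta\sqrt{\mu}$ from the convexity step. It is not $\beta(1+\beta\sqrt{\mu})\langle\nabla_x\mathcal{L}(z,\lambda),\dot x\rangle$. With your coefficient, Young against $\frac{\beta}{2}\|\nabla_x\mathcal{L}(z,\lambda)\|^2$ would leave $\frac{\beta}{2}(1+\beta\sqrt{\mu})^2\|\dot x\|^2$, which does not match the stated bound. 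The correct cross term leaves $\frac{\beta\mu}{2}\|\dot x\|^2$, which is only the $2\beta\sqrt{\mu}$ piece.

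The $3\beta^2\mu$ piece has a different origin, in three steps:
\begin{itemize}
\item[(a)] Apply $\mathcal{L}(z,\lambda^*)-\mathcal{L}(x^*,\lambda^*)\ge\frac{\mu}{2}\|z-x^*\|^2$ to the surplus $-\frac{\sqrt{\mu}}{2}(\mathcal{L}(z,\lambda^*)-\mathcal{L}(x^*,\lambda^*))$. Together with the convexity term, this gives $-\frac{3\mu\sqrt{\mu}}{4}\|z-x^*\|^2$.
\item[(b)] Use $\|z-x^*\|^2\ge\frac12\|x-x^*\|^2-\beta^2\|\dot x\|^2$. Do not expand $\|z-x^*\|^2$ exactly, since that would create an uncancelled $\langle x-x^*,\dot x\rangle$ term.
\item[(c)] This cancels the $+\frac{3\mu\sqrt{\mu}}{8}\|x-x^*\|^2$ coming from $\frac{\sqrt{\mu}}{2}W$, at the cost of $\frac{3\beta^2\mu\sqrt{\mu}}{4}\|\dot x\|^2$.
\end{itemize}
The $\langle x-x^*,\dot x\rangle$ terms that do cancel exactly are those from (ii), (iii), and $\frac{\sqrt{\mu}}{4}\|\sqrt{\mu}(x-x^*)+\dot x\|^2$. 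The total $\|\dot x\|^2$ coefficient is then $-\frac{3\sqrt{\mu}}{4}+\frac{\beta\mu}{2}+\frac{3\beta^2\mu\sqrt{\mu}}{4}=-\frac{\sqrt{\mu}}{4}(3-2\beta\sqrt{\mu}-3\beta^2\mu)$, as stated.
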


\begin{proof}
	Now, we first analyze the time derivative of $W(t)$. Clearly, using $\nabla_x \mathcal{L}(x(t)+\beta\dot{x}(t), \lambda^*)=\nabla_x \mathcal{L}(x(t)+\beta\dot{x}(t), \lambda(t))-A^\top(\lambda(t)-\lambda^*)$, we obtain
	\begin{equation}\label{h1}
		\begin{split}
			\dot{W}(t)
			= &~ \langle \nabla_x \mathcal{L}(x(t)+\beta\dot{x}(t), \lambda^*), \dot{x}(t)+ \beta\ddot{x}(t) \rangle \\
			& + \langle \sqrt{\mu}(x(t)-x^*)+\dot{x}(t), \sqrt{\mu}\dot{x}(t) + \ddot{x}(t) \rangle + \frac{\mu}{2} \langle x(t)-x^*, \dot{x}(t) \rangle \\
			= &~ \langle \nabla_x \mathcal{L}(x(t)+\beta\dot{x}(t), \lambda(t)), (1-2\beta\sqrt{\mu})\dot{x}(t)-\beta\nabla_x \mathcal{L}(x(t)+\beta\dot{x}(t), \lambda(t)) \rangle \\
			& - \langle A^\top (\lambda(t)-\lambda^*), (1-2\beta\sqrt{\mu})\dot{x}(t)-\beta\nabla_x \mathcal{L}(x(t)+\beta\dot{x}(t), \lambda(t)) \rangle \\
			& + \langle \sqrt{\mu}(x(t)-x^*)+\dot{x}(t), -\sqrt{\mu}\dot{x}(t) - \nabla_x \mathcal{L}(x(t)+\beta\dot{x}(t), \lambda(t)) \rangle \\
			& + \frac{\mu}{2} \langle x(t)-x^*, \dot{x}(t) \rangle \\
			= &~ -2\beta\sqrt{\mu}\langle \nabla_x \mathcal{L}(x(t)+\beta\dot{x}(t), \lambda(t)), \dot{x}(t) \rangle - \beta\|\nabla_x \mathcal{L}(x(t)+\beta\dot{x}(t), \lambda(t))\|^2 \\
			& - (1-2\beta\sqrt{\mu}) \langle \lambda(t)-\lambda^*, A\dot{x}(t) \rangle + \beta \langle A^\top (\lambda(t)-\lambda^*), \nabla_x \mathcal{L}(x(t)+\beta\dot{x}(t), \lambda(t)) \rangle \\
			& - \frac{\mu}{2} \langle x(t) - x^*, \dot{x}(t) \rangle - \sqrt{\mu} \|\dot{x}(t)\|^2 - \sqrt{\mu} \langle \nabla_x \mathcal{L}(x(t)+\beta\dot{x}(t), \lambda^*), x(t)- x^* \rangle \\
			& - \sqrt{\mu} \langle A^\top (\lambda(t)-\lambda^*), x(t)- x^* \rangle.
		\end{split}
	\end{equation}
	From the $\mu$-strong convexity of $f$, we deduce that
	\begin{equation*}
		\begin{split}
			& - \langle \nabla_x \mathcal{L}(x(t)+\beta\dot{x}(t), \lambda^*), x(t)+\beta\dot{x}(t) - x^* \rangle \\
			= &~ - \langle \nabla f(x(t)+\beta\dot{x}(t)), x(t)+\beta\dot{x}(t) - x^* \rangle - \langle A^\top \lambda^*, x(t)+\beta\dot{x}(t) - x^* \rangle \\
			\leq &~ f(x^*) - f(x(t)+\beta\dot{x}(t)) - \frac{\mu}{2}\|x(t)+\beta\dot{x}(t) - x^*\|^2 \\
			& - \langle A^\top \lambda^*, x(t)+\beta\dot{x}(t) - x^* \rangle \\
			= &~ \mathcal{L}(x^*, \lambda^*) - \mathcal{L}(x(t)+\beta\dot{x}(t), \lambda^*) - \frac{\mu}{2}\|x(t)+\beta\dot{x}(t) - x^*\|^2.
		\end{split}
	\end{equation*}
	Then,
	\begin{equation*}
		\begin{split}
			& - \langle \nabla_x \mathcal{L}(x(t)+\beta\dot{x}(t), \lambda^*), x(t) - x^* \rangle \\
			= &~ - \langle \nabla_x \mathcal{L}(x(t)+\beta\dot{x}(t), \lambda^*), x(t)+\beta\dot{x}(t) - x^* \rangle + \beta \langle \nabla_x \mathcal{L}(x(t)+\beta\dot{x}(t), \lambda^*), \dot{x}(t) \rangle \\
			\leq &~ \mathcal{L}(x^*, \lambda^*) - \mathcal{L}(x(t)+\beta\dot{x}(t), \lambda^*) - \frac{\mu}{2}\|x(t)+\beta\dot{x}(t) - x^*\|^2 \\
			& + \beta \langle \nabla_x \mathcal{L}(x(t)+\beta\dot{x}(t), \lambda(t)), \dot{x}(t) \rangle - \beta \langle \lambda(t)-\lambda^*, A\dot{x}(t) \rangle.
		\end{split}
	\end{equation*}
	This together with \eqref{h1} yields
	\begin{equation*}
		\begin{split}
			\dot{W}(t)
			\leq &~ -\sqrt{\mu} \big(\mathcal{L}(x(t)+\beta\dot{x}(t), \lambda^*) - \mathcal{L}(x^*, \lambda^*)\big) \\
			& - \beta\sqrt{\mu} \langle \nabla_x \mathcal{L}(x(t)+\beta\dot{x}(t), \lambda(t)), \dot{x}(t) \rangle - \beta\|\nabla_x \mathcal{L}(x(t)+\beta\dot{x}(t), \lambda(t))\|^2 \\
			& - \frac{\mu\sqrt{\mu}}{2}\|x(t)+\beta\dot{x}(t) - x^*\|^2 - (1-\beta\sqrt{\mu}) \langle \lambda(t)-\lambda^*, A\dot{x}(t) \rangle \\
			& + \beta \langle A^\top (\lambda(t)-\lambda^*), \nabla_x \mathcal{L}(x(t)+\beta\dot{x}(t), \lambda(t)) \rangle - \frac{\mu}{2} \langle x(t) - x^*, \dot{x}(t) \rangle \\
			& - \sqrt{\mu} \|\dot{x}(t)\|^2 - \sqrt{\mu} \langle A^\top (\lambda(t)-\lambda^*), x(t)- x^* \rangle.
		\end{split}
	\end{equation*}
	Note that
	\begin{equation*}
		\begin{split}
			\frac{\sqrt{\mu}}{2}W(t) = &~ \frac{\sqrt{\mu}}{2}\big(\mathcal{L}(x(t)+\beta\dot{x}(t), \lambda^*)-\mathcal{L}(x^*, \lambda^*)\big) + \frac{3\mu\sqrt{\mu}}{8}\|x(t)-x^*\|^2 \\
			& + \frac{\sqrt{\mu}}{4}\|\dot{x}(t)\|^2 + \frac{\mu}{2}\langle x(t) - x^*, \dot{x}(t) \rangle.
		\end{split}
	\end{equation*}
	Then,
	\begin{equation}\label{*}
		\begin{split}
			& \dot{W}(t) + \frac{\sqrt{\mu}}{2} W(t) \\
			\leq &~ -\frac{\sqrt{\mu}}{2}\big(\mathcal{L}(x(t)+\beta\dot{x}(t), \lambda^*) - \mathcal{L}(x^*, \lambda^*)\big) - \beta\sqrt{\mu} \langle \nabla_x \mathcal{L}(x(t)+\beta\dot{x}(t), \lambda(t)), \dot{x}(t) \rangle \\
			& - \beta\|\nabla_x \mathcal{L}(x(t)+\beta\dot{x}(t), \lambda(t))\|^2 - \frac{\mu\sqrt{\mu}}{2}\|x(t)+\beta\dot{x}(t) - x^*\|^2 \\
			& - (1-\beta \sqrt{\mu}) \langle \lambda(t)-\lambda^*, A\dot{x}(t) \rangle + \beta \langle A^\top (\lambda(t)-\lambda^*), \nabla_x \mathcal{L}(x(t)+\beta\dot{x}(t), \lambda(t)) \rangle \\
			& - \frac{3\sqrt{\mu}}{4} \|\dot{x}(t)\|^2 - \sqrt{\mu} \langle A^\top (\lambda(t)-\lambda^*), x(t)- x^* \rangle + \frac{3\mu\sqrt{\mu}}{8}\|x(t)-x^*\|^2.
		\end{split}
	\end{equation}
	Combining the $\mu$-strong convexity of $f$ with \eqref{kkt} yields
	\begin{align} \label{fstr}
		\begin{split}
			& \mathcal{L}(x(t)+\beta\dot{x}(t), \lambda^*)-\mathcal{L}(x^*, \lambda^*) \\
			\geq & ~ \langle \nabla f(x^*), x(t)+\beta\dot{x}(t)-x^{*} \rangle + \frac{\mu}{2}\|x(t)+\beta\dot{x}(t)-x^*\|^2 + \langle \lambda^{*}, A(x(t)+\beta\dot{x}(t))-b \rangle \\
			= & ~  \frac{\mu}{2}\|x(t)+\beta\dot{x}(t)-x^*\|^2.
		\end{split}
	\end{align}
	Therefore, it follows from \eqref{*} and \eqref{fstr} that
	\begin{equation*}
		\begin{split}
			&~\dot{W}(t) + \frac{\sqrt{\mu}}{2} W(t)\\
			\leq &~ - \beta\sqrt{\mu} \langle \nabla_x \mathcal{L}(x(t)+\beta\dot{x}(t), \lambda(t)), \dot{x}(t) \rangle - \beta\|\nabla_x \mathcal{L}(x(t)+\beta\dot{x}(t), \lambda(t))\|^2 \\
			& - \frac{3\mu\sqrt{\mu}}{4}\|x(t)+\beta\dot{x}(t) - x^*\|^2 - (1-\beta \sqrt{\mu}) \langle \lambda(t)-\lambda^*, A\dot{x}(t) \rangle \\
			& + \beta \langle A^\top (\lambda(t)-\lambda^*), \nabla_x \mathcal{L}(x(t)+\beta\dot{x}(t), \lambda(t)) \rangle - \frac{3\sqrt{\mu}}{4} \|\dot{x}(t)\|^2 \\
			& - \sqrt{\mu} \langle A^\top (\lambda(t)-\lambda^*), x(t)- x^* \rangle + \frac{3\mu\sqrt{\mu}}{8}\|x(t)-x^*\|^2\\
			\leq &~ - \beta\sqrt{\mu} \langle \nabla_x \mathcal{L}(x(t)+\beta\dot{x}(t), \lambda(t)), \dot{x}(t) \rangle - \beta\|\nabla_x \mathcal{L}(x(t)+\beta\dot{x}(t), \lambda(t))\|^2 \\
			& - (1-\beta \sqrt{\mu}) \langle \lambda(t)-\lambda^*, A\dot{x}(t) \rangle \\
			& + \beta \langle A^\top (\lambda(t)-\lambda^*), \nabla_x \mathcal{L}(x(t)+\beta\dot{x}(t), \lambda(t)) \rangle - \frac{3\sqrt{\mu}}{4}(1-\beta^2 \mu) \|\dot{x}(t)\|^2 \\
			& - \sqrt{\mu} \langle A^\top (\lambda(t)-\lambda^*), x(t)- x^* \rangle,
		\end{split}
	\end{equation*}
	where the last inequality holds because $\|x(t)+\beta\dot{x}(t) - x^*\|^2 \geq \frac{1}{2}\|x(t)-x^*\|^2 - \beta^2 \|\dot{x}(t)\|^2$. Furthermore, note that
	\begin{align*}
		- \beta\sqrt{\mu} \langle \nabla_x \mathcal{L}(x(t)+\beta\dot{x}(t), \lambda(t)), \dot{x}(t) \rangle \leq ~\frac{\beta}{2} \|\nabla_x \mathcal{L}(x(t)+\beta\dot{x}(t), \lambda(t))\|^2 + \frac{\beta \mu}{2}\|\dot{x}(t)\|^2.
	\end{align*}
	Thus,
	\begin{equation*}
		\begin{split}
			&~\dot{W}(t) + \frac{\sqrt{\mu}}{2} W(t)\\
			\leq &~ - \frac{\beta}{2}\|\nabla_x \mathcal{L}(x(t)+\beta\dot{x}(t), \lambda(t))\|^2 - (1-\beta \sqrt{\mu}) \langle \lambda(t)-\lambda^*, A\dot{x}(t) \rangle \\
			& + \beta \langle A^\top (\lambda(t)-\lambda^*), \nabla_x \mathcal{L}(x(t)+\beta\dot{x}(t), \lambda(t)) \rangle \\
			& - \frac{\sqrt{\mu}}{4}(3-2\beta\sqrt{\mu}-3\beta^2 \mu) \|\dot{x}(t)\|^2 - \sqrt{\mu} \langle A^\top (\lambda(t)-\lambda^*), x(t)- x^* \rangle \\
			= &~ -\frac{\beta}{2} \|\nabla_x \mathcal{L}(x(t)+\beta\dot{x}(t), \lambda(t))\|^2 - \frac{\sqrt{\mu}}{4}(3- 2\beta\sqrt{\mu} - 3\beta^2\mu) \|\dot{x}(t)\|^2 \\
			& - \frac{\sqrt{\mu}}{\eta(t)} \langle \lambda(t)-\lambda^{*}, \dot{\lambda}(t) \rangle,
		\end{split}
	\end{equation*}
	where the last equality holds because the second equation of System \eqref{system} and
	\begin{align*}
		 \sqrt{\mu} \nabla_\lambda \mathcal{L}\Big(z(t)+\frac{1}{\sqrt{\mu}}\dot{z}(t),\lambda(t)\Big)
		=   (1-\beta\sqrt{\mu})A\dot{x}(t)-\beta A\nabla_x \mathcal{L}(z(t),\lambda(t))+\sqrt{\mu}A(x(t)-x^*).
	\end{align*}
	The proof is complete. \qed
	
\end{proof}

\begin{theorem} \label{lianxu}
	Let $(x,\lambda): [{t}_{0},+\infty) \rightarrow \mathbb{R}^{n}\times \mathbb{R}^{m}$ be a solution of System \eqref{system}. Suppose that $0 \leq \beta \leq  \frac{\sqrt{10}-1}{3\sqrt{\mu}}$ and $\dot{\eta}(t) \leq \frac{\sqrt{\mu}}{2}\eta(t)$. Then, for any $({x}^{*}, {\lambda }^{*})\in \Omega $, the trajectory $\{(x(t), \lambda(t))\}$ is bounded, and the following results are satisfied:
	\begin{align*}
		\left\{
		\begin{array}{lll}
			|f(x(t)) - f(x^{*})| = \mathcal{O} \left(\frac{1}{\sqrt{\eta(t)}}\right),~\text{as} ~t \rightarrow +\infty,\\ [3mm]
			\|Ax(t)-b\| = \mathcal{O} \left(\frac{1}{\sqrt{\eta(t)}}\right),~\text{as} ~t \rightarrow +\infty,\\ [3mm]
			\|x(t) - x^{*}\| = \mathcal{O} \left(\frac{1}{\sqrt{\eta(t)}}\right),~\text{as} ~t \rightarrow +\infty,\\ [3mm]
			\|\dot{x}(t)\|= \mathcal{O} \left(\frac{1}{\sqrt{\eta(t)}}\right),~\text{as} ~t \rightarrow +\infty.
		\end{array}
		\right.
	\end{align*}
\end{theorem}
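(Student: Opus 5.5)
The plan is a Lyapunov argument built on Proposition \ref{prop}. The dual term on the right-hand side of \eqref{hinequ} is exactly the derivative of a quadratic in $\lambda$. So I will consider the energy
\begin{equation*}
\mathcal{E}(t) := \eta(t)W(t) + \frac{\sqrt{\mu}}{2}\|\lambda(t)-\lambda^*\|^2 .
\end{equation*}
First I check that $W(t)\ge 0$. By \eqref{fstr}, $\mathcal{L}(x(t)+\beta\dot x(t),\lambda^*)-\mathcal{L}(x^*,\lambda^*)\ge \frac{\mu}{2}\|x(t)+\beta\dot x(t)-x^*\|^2\ge 0$, and the other two terms of $W$ are squares. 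Hence $\mathcal{E}(t)\ge 0$.

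Next I differentiate $\mathcal{E}$ and multiply \eqref{hinequ} by $\eta(t)>0$. This gives
\begin{equation*}
\dot{\mathcal{E}}(t) \le \Big(\dot\eta(t)-\frac{\sqrt{\mu}}{2}\eta(t)\Big)W(t) - \frac{\beta\eta(t)}{2}\|\nabla_x\mathcal{L}(z(t),\lambda(t))\|^2 - \frac{\sqrt{\mu}\,\eta(t)}{4}\big(3-2\beta\sqrt{\mu}-3\beta^2\mu\big)\|\dot x(t)\|^2 .
\end{equation*}
The term $-\sqrt{\mu}\langle\lambda(t)-\lambda^*,\dot\lambda(t)\rangle$ coming from \eqref{hinequ} cancels exactly with the derivative of $\frac{\sqrt{\mu}}{2}\|\lambda(t)-\lambda^*\|^2$.

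I then show each term on the right is nonpositive. The first is, because $\dot\eta\le\frac{\sqrt\mu}{2}\eta$ and $W\ge 0$. The second is, trivially. For the third, set $u=\beta\sqrt\mu\ge 0$. The quadratic $3u^2+2u-3$ has positive root $u=\frac{\sqrt{10}-1}{3}$, so the hypothesis $0\le\beta\le\frac{\sqrt{10}-1}{3\sqrt\mu}$ is exactly what makes $3-2\beta\sqrt\mu-3\beta^2\mu\ge 0$. Therefore $\mathcal{E}$ is nonincreasing and $\mathcal{E}(t)\le\mathcal{E}(t_0)=:C$ for all $t\ge t_0$. This step is where the parameter conditions enter, and it is the only real obstacle; once it is settled, the rest is bookkeeping.

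From $\mathcal{E}(t)\le C$ I read off the following.
\begin{itemize}
\item $\|\lambda(t)-\lambda^*\|^2\le 2C/\sqrt\mu$, so $\lambda$ is bounded.
\item $W(t)\le C/\eta(t)$. From the $\frac{\mu}{4}\|x-x^*\|^2$ term of $W$, $\|x(t)-x^*\|\le 2\sqrt{C/(\mu\eta(t))}$, which gives boundedness of $x$ (since $\eta$ is nondecreasing and positive) and the $\mathcal{O}(1/\sqrt{\eta(t)})$ rate for the trajectory.
\item From $\frac12\|\sqrt\mu(x-x^*)+\dot x\|^2\le C/\eta$ and the triangle inequality, $\|\dot x(t)\|\le\sqrt{2C/\eta(t)}+\sqrt\mu\|x(t)-x^*\|=\mathcal{O}(1/\sqrt{\eta(t)})$.
\item Feasibility follows from $Ax^*=b$: $\|Ax(t)-b\|=\|A(x(t)-x^*)\|\le\|A\|\,\|x(t)-x^*\|$.
\end{itemize}
Finally, for the objective error I use \eqref{L} with $\psi=f$ to get
\begin{equation*}
|f(x(t))-f(x^*)|\le\|\nabla f(x^*)\|\,\|x(t)-x^*\|+\frac{L}{2}\|x(t)-x^*\|^2 .
\end{equation*}
Since $\|x(t)-x^*\|=\mathcal{O}(1/\sqrt{\eta(t)})$ and $\eta(t)\to+\infty$, the linear term dominates, giving $\mathcal{O}(1/\sqrt{\eta(t)})$.
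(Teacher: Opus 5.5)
Your proof is correct. The Lyapunov part matches the paper's: the same energy $\eta(t)W(t)+\frac{\sqrt{\mu}}{2}\|\lambda(t)-\lambda^*\|^2$, the same cancellation of the dual cross term, and the same use of $\dot\eta\le\frac{\sqrt\mu}{2}\eta$ and $3-2\beta\sqrt\mu-3\beta^2\mu\ge 0$. Your explicit check that $W\ge 0$ via \eqref{fstr} is a step the paper takes for granted.

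You depart from the paper only in the feasibility and objective estimates. The paper writes $\lambda(t)-\lambda(t_0)$ as the integral of $\dot\lambda$ and integrates by parts, which gives an expression of the form $g(t)+\int_{t_0}^t h(s)g(s)\,ds$ with $g(t)=\eta(t)(Az(t)-b)$. It then invokes Lemma \ref{yinli} to get $\|Az(t)-b\|=\mathcal{O}(1/\eta(t))$, and passes to $x(t)$ through $\|Ax(t)-b\|\le\|Az(t)-b\|+\beta\|A\|\|\dot x(t)\|$. For the objective, it first bounds $|f(z(t))-f(x^*)|=\mathcal{O}(1/\eta(t))$ and then transfers this to $f(x(t))$ using $L$-smoothness.

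You instead use $Ax^*=b$ and expand \eqref{L} around $x^*$, so everything follows from $\|x(t)-x^*\|=\mathcal{O}(1/\sqrt{\eta(t)})$. Your route is shorter and avoids Lemma \ref{yinli} entirely. It also loses nothing for the theorem as stated, because the $\beta\|A\|\|\dot x(t)\|$ term already caps the paper's bound at $\mathcal{O}(1/\sqrt{\eta(t)})$.

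What the paper's route buys is the sharper $\mathcal{O}(1/\eta(t))$ rates for feasibility and objective error at the extrapolated point $z(t)=x(t)+\beta\dot x(t)$. It also uses the standard primal-dual feasibility technique, which still works when the energy does not control $\|x(t)-x^*\|$, for example without strong convexity.
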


\begin{proof}
	We first introduce the energy function $E: [t_0, +\infty) \rightarrow [0, +\infty)$ as follows:
	\begin{align*}
		E(t) := \eta(t) W(t) + \frac{\sqrt{\mu}}{2}\|\lambda(t)-\lambda^{*}\|^2,
	\end{align*}
	where $W(t)$ is defined in Proposition \ref{prop}.
	
	We now analyze the time derivative of $E(t)$. Obviously, it follows from \eqref{hinequ} that
	\begin{equation} \label{muE}
		\begin{split}
			\dot{E}(t) = &~ \dot{\eta}(t) W(t) + \eta(t) \dot{W}(t) + \sqrt{\mu}\langle \lambda(t)-\lambda^{*}, \dot{\lambda}(t)\rangle \\
			\leq &~ \Big(\dot{\eta}(t) - \frac{\sqrt{\mu}}{2} \eta(t)\Big) W(t) - \frac{\beta}{2}\eta(t) \|\nabla_x \mathcal{L}(x(t)+\beta\dot{x}(t), \lambda(t))\|^2 \\
			& - \frac{\sqrt{\mu}}{4}(3- 2\beta \sqrt{\mu}-3\beta^2 \mu)\eta(t)\|\dot{x}(t)\|^2.
		\end{split}
	\end{equation}
	Since $0 \leq \beta \leq  \frac{\sqrt{10}-1}{3\sqrt{\mu}}$, we have $3-2\beta\sqrt{\mu}-3\beta^2 \mu \geq 0$. This together with $\dot{\eta}(t) \leq \frac{\sqrt{\mu}}{2}\eta(t)$ and \eqref{muE} yields $\dot{E}(t) \leq 0$, for all $t \geq t_0$. This means that $$E(t) \leq E(t_0), \forall t \geq t_0.$$ By virtue of $\eta(t)W(t) \leq E(t) \leq E(t_0)$, it is clear that the trajectory $\{(x(t), \lambda(t))\}$ is bounded and the following results are satisfied:
	\begin{align}
		&\mathcal{L}(x(t)+\beta\dot{x}(t), \lambda^*)-\mathcal{L}(x^*, \lambda^*) = \mathcal{O} \left(\frac{1}{\eta(t)}\right), ~\text{as}~t \rightarrow +\infty, \label{1l}\\
		& \|\sqrt{\mu}(x(t)-x^*)+\dot{x}(t)\|= \mathcal{O} \left(\frac{1}{\sqrt{\eta(t)}}\right), ~\text{as}~t \rightarrow +\infty, \label{2xd} \\
		&\|x(t) - x^{*}\| = \mathcal{O} \left(\frac{1}{\sqrt{\eta(t)}}\right), ~\text{as}~t \rightarrow +\infty, \label{3x}
	\end{align}
	and
	\begin{align} \label{lam}
		\|\lambda(t)-\lambda^*\|^2 \leq \frac{2}{\sqrt{\mu}}E(t_0).
	\end{align}
	Clearly, combining $\|\dot{x}(t)\| \leq \|\sqrt{\mu}(x(t)-x^*)+\dot{x}(t)\| + \sqrt{\mu}\|x(t)-x^*\|$ with \eqref{2xd} and \eqref{3x}, it follows that
	\begin{align}\label{xd}
		\|\dot{x}(t)\| = \mathcal{O} \left(\frac{1}{\sqrt{\eta(t)}}\right),~\text{as}~t \rightarrow +\infty.
	\end{align}
	From \eqref{system}, we deduce that
	\begin{eqnarray}
		\begin{split} \label{lamda}
			\lambda(t)-\lambda(t_0)= \int_{t_0}^t \dot{\lambda}(s)ds
			= & ~ \int_{t_0}^t \eta(s)\left(Az(s) - b + \frac{1}{\sqrt{\mu}} A\dot{z}(s)\right)ds \\
			= & ~ \frac{1}{\sqrt{\mu}} \big[\eta(t)(Az(t)-b)-\eta(t_0)(Az(t_0)-b)\big] \\
			& + \frac{1}{\sqrt{\mu}} \int_{t_0}^t \big( \sqrt{\mu} \eta(s) - \dot{\eta}(s)\big)\big(Az(s)-b\big)ds.
		\end{split}
	\end{eqnarray}
	From \eqref{lam}, we obtain $\|\lambda(t)-\lambda(t_0)\| \leq \|\lambda(t)-\lambda^{*}\| + \|\lambda(t_0)-\lambda^{*}\| \leq 2\sup_{t \geq t_0}\|\lambda(t)-\lambda^{*}\| < +\infty.$ Let $g(t) = \eta(t)(Az(t)-b)$ and $h(t) = \frac{\sqrt{\mu} \eta(t)-\dot{\eta}(t)}{\eta(t)}$. Then, it follows from \eqref{lamda} that
	\[
	\left\|g(t)+ \int_{t_0}^t h(s)g(s) \, ds \right\| \leq C,~\forall t \geq t_0,
	\]
	where $C = 2\sqrt{\mu}\sup_{t \geq t_0} \|\lambda(t)-\lambda^*\| + \eta(t_0)\|Az(t_0)-b\| < +\infty$. This together with Lemma \ref{yinli} implies $\sup_{t \geq t_0} \eta(t)\|Az(t)-b\| < 2C$, which means that
	\begin{align}\label{ad}
		\|A(x(t)+\beta\dot{x}(t)) - b\| = \mathcal{O}\left(\frac{1}{\eta(t)}\right),~\text{as}~t \rightarrow +\infty.
	\end{align}
	This together with $\|Ax(t)-b\| \leq \|A(x(t)+\beta\dot{x}(t)) - b\| + \beta \|A\|\|\dot{x}(t)\|$ and \eqref{xd} yields
	\begin{align*}
		\|Ax(t) - b\| = \mathcal{O}\left(\frac{1}{\sqrt{\eta(t)}}\right),~\text{as}~t \rightarrow +\infty.
	\end{align*}
	Note that
	\begin{equation*}
		\begin{split}
			 |f(x(t)+\beta\dot{x}(t)) - f(x^{*})|
			\leq &~ \mathcal{L}(x(t)+\beta\dot{x}(t), \lambda^*)-\mathcal{L}(x^*, \lambda^*) +  |\langle \lambda^{*}, A(x(t)+\beta\dot{x}(t))-b \rangle| \\
			\leq &~ \mathcal{L}(x(t)+\beta\dot{x}(t), \lambda^*)-\mathcal{L}(x^*, \lambda^*) +  \|\lambda^{*}\| \|A(x(t)+\beta\dot{x}(t))-b\|.
		\end{split}
	\end{equation*}
	Then, it follows from \eqref{1l} and \eqref{ad} that
	\begin{align}\label{ff}
		|f(x(t)+\beta\dot{x}(t)) - f(x^{*})| = \mathcal{O} \left(\frac{1}{\eta(t)}\right), ~\text{as}~t \rightarrow +\infty.
	\end{align}
	Taking \eqref{L} into account, it is easy to show that
	\begin{align*}
		f(x(t)) - f(x(t)+\beta\dot{x}(t)) \leq &~ \langle \nabla f(x(t)+\beta\dot{x}(t)), -\beta\dot{x}(t) \rangle + \frac{L\beta^2}{2}\|\dot{x}(t)\|^2 \\
		\leq &~ \beta \|\nabla f(x(t)+\beta\dot{x}(t))\| \|\dot{x}(t)\| + \frac{L\beta^2}{2}\|\dot{x}(t)\|^2
	\end{align*}
	and
	\begin{align*}
		f(x(t)+\beta\dot{x}(t))-f(x(t)) \leq &~ \langle \nabla f(x(t)),\beta\dot{x}(t) \rangle + \frac{L\beta^2}{2}\|\dot{x}(t)\|^2 \\
		\leq &~ \beta\|\nabla f(x(t))\|\|\dot{x}(t)\|+\frac{L\beta^2}{2}\|\dot{x}(t)\|^2.
	\end{align*}
	Combining the Lipschitz continuity of $\nabla f$, \eqref{xd} and the boundedness of $\{x(t)\}$, we obtain
	\begin{align*}
		|f(x(t)) - f(x(t)+\beta\dot{x}(t))| = \mathcal{O} \left(\frac{1}{\sqrt{\eta(t)}}\right),~\text{as}~t \rightarrow +\infty.
	\end{align*}
	This together with \eqref{ff} and $|f(x(t))-f(x^*)| \leq |f(x(t)) - f(x(t)+\beta\dot{x}(t))| + |f(x(t)+\beta\dot{x}(t)) - f(x^*)|$ implies
	\begin{align*}
		|f(x(t))-f(x^*)| = \mathcal{O} \left(\frac{1}{\sqrt{\eta(t)}}\right),~\text{as}~t \rightarrow +\infty.
	\end{align*}
	The proof is complete. \qed
\end{proof}

In the specific case where $\dot{\eta}(t) = \frac{\sqrt{\mu}}{2} \eta(t)$, we obtain $\eta(t) = \eta(t_0)e^{\frac{\sqrt{\mu}}{2}(t-t_0)}$. Then, by applying Theorem \ref{lianxu}, we establish exponential convergence rates for the objective function value error, the feasibility measure, and both the trajectory and its corresponding velocity vector.

\begin{corollary} \label{coro}
	Let $(x,\lambda): [{t}_{0},+\infty) \rightarrow \mathbb{R}^{n}\times \mathbb{R}^{m}$ be a solution of System \eqref{system}. Suppose that $0 \leq \beta \leq \frac{\sqrt{10}- 1}{3\sqrt{\mu}}$ and $\eta(t) = \eta(t_0)e^{\frac{\sqrt{\mu}}{2}(t-t_0)}$. Then, for any $({x}^{*}, {\lambda }^{*})\in \Omega $, the trajectory $\{(x(t), \lambda(t))\}$ is bounded, and the following results are satisfied:
	\begin{align*}
		\left\{
		\begin{array}{lllll}
			|f(x(t)) - f(x^{*})| = \mathcal{O}\left(e^{-\frac{\sqrt{\mu}}{4}t}\right),~\text{as}~t \rightarrow +\infty,\\ [3mm]
			\|Ax(t)-b\| = \mathcal{O}\left(e^{-\frac{\sqrt{\mu}}{4}t}\right),~\text{as}~t \rightarrow +\infty, \\ [3mm]
			\|x(t)- x^{*}\| = \mathcal{O}\left(e^{-\frac{\sqrt{\mu}}{4}t}\right),~\text{as}~t \rightarrow +\infty,\\ [3mm]
			\|\dot{x}(t)\| = \mathcal{O}\left(e^{-\frac{\sqrt{\mu}}{4}t}\right),~\text{as}~t \rightarrow +\infty.
		\end{array}
		\right.
	\end{align*}
\end{corollary}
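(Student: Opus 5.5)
The corollary is a direct specialization of Theorem \ref{lianxu}, so the plan is to check its hypotheses for the given $\eta$ and then substitute.

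First, the choice $\eta(t)=\eta(t_0)e^{\frac{\sqrt{\mu}}{2}(t-t_0)}$, with $\eta(t_0)>0$, is positive, differentiable and non-decreasing. It also satisfies $\lim_{t\to+\infty}\eta(t)=+\infty$, and it is locally integrable, so Theorem \ref{unique} applies and a solution exists. Differentiating gives $\dot{\eta}(t)=\frac{\sqrt{\mu}}{2}\eta(t)$. Hence the condition $\dot{\eta}(t)\le\frac{\sqrt{\mu}}{2}\eta(t)$ of Theorem \ref{lianxu} holds with equality. The bound $0\le\beta\le\frac{\sqrt{10}-1}{3\sqrt{\mu}}$ is assumed verbatim, and it guarantees $3-2\beta\sqrt{\mu}-3\beta^2\mu\ge 0$ as in the proof of Theorem \ref{lianxu}.

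Theorem \ref{lianxu} then gives boundedness of $\{(x(t),\lambda(t))\}$. It also gives the four estimates $|f(x(t))-f(x^*)|$, $\|Ax(t)-b\|$, $\|x(t)-x^*\|$, $\|\dot{x}(t)\|=\mathcal{O}(1/\sqrt{\eta(t)})$. Substituting,
\begin{equation*}
\frac{1}{\sqrt{\eta(t)}}=\frac{1}{\sqrt{\eta(t_0)}}\,e^{\frac{\sqrt{\mu}}{4}t_0}\,e^{-\frac{\sqrt{\mu}}{4}t}.
\end{equation*}
The prefactor is a constant independent of $t$, so each $\mathcal{O}(1/\sqrt{\eta(t)})$ bound becomes $\mathcal{O}(e^{-\frac{\sqrt{\mu}}{4}t})$. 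This is exactly the claimed list.

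There is no real obstacle. The only care needed is to confirm that the constants hidden in the $\mathcal{O}$-notation of Theorem \ref{lianxu} depend only on $E(t_0)$, $\|A\|$, $L$, $\beta$, $\|\lambda^*\|$ and the trajectory bounds, not on $t$. Then absorbing the factor $e^{\frac{\sqrt{\mu}}{4}t_0}/\sqrt{\eta(t_0)}$ is legitimate.
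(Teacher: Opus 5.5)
Your proposal is correct and matches the paper's route: the paper gives no separate argument beyond observing that $\eta(t)=\eta(t_0)e^{\frac{\sqrt{\mu}}{2}(t-t_0)}$ satisfies $\dot{\eta}(t)=\frac{\sqrt{\mu}}{2}\eta(t)$ with equality, applying Theorem \ref{lianxu}, and substituting $1/\sqrt{\eta(t)}=\eta(t_0)^{-1/2}e^{\frac{\sqrt{\mu}}{4}t_0}e^{-\frac{\sqrt{\mu}}{4}t}$. Your closing remark about the $t$-independence of the hidden constants is harmless but not needed, since that independence is already part of what the $\mathcal{O}$-bounds in Theorem \ref{lianxu} assert.
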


\begin{remark}
	\textup{ While existing primal-dual dynamical systems with implicit Hessian-driven damping exhibit linear convergence \cite{lhl}, in this paper we demonstrate that an exponential convergence rate can be achieved under strong convexity of the objective function and appropriate assumptions on the parameters.}
\end{remark}

  In the sequel, we show that our approach can naturally be extended to the investigation of non-smooth convex optimization problems. To do this, we consider the convex optimization problem \eqref{prob}, where $f$ is a proper lower semi-continuous convex function. In order to adapt System \eqref{system} to non-smooth convex situation, we proposed the following differential inclusion system:
  \begin{align} \label{system1}
  	\left\{
  	\begin{array}{ll}
  		\ddot{x}(t) + 2\sqrt{\mu}\dot{x}(t) + \partial_x \mathcal{L}\big(z(t), \lambda(t)\big) \ni 0, \\ [2mm]
  		\dot{\lambda}(t) - \eta(t)\nabla_\lambda \mathcal{L}\big(z(t) + \frac{1}{\sqrt{\mu}}\dot{z}(t), \lambda(t) \big) = 0,
  	\end{array}
  	\right.
  \end{align}
  where $z(t) = x(t) + \beta \dot{x}(t)$ and $\partial_x \mathcal{L}$ denotes the subdifferential of $\mathcal{L}$ with respect to $x$.

  Let $f_\gamma$ be the Moreau envelope of $f$ with index $\gamma > 0$, which is defined as:
  \[
  f_\gamma(x) = \min_{y \in \mathbb{R}^n} \left\{ f(y) + \frac{1}{2\gamma} \|y - x\|^2 \right\}, ~ \forall x \in \mathbb{R}^n.
  \]
  Clearly, $f_\gamma$ is a continuously differentiable convex function, and $\nabla f_\gamma$ is $\gamma^{-1}$-Lipschitz continuous. According to Lemma \ref{muml}, in this case, the convergence rates of System \eqref{system1} can be established by investigating the properties of the following system:
	\begin{align*}
		\left\{
		\begin{array}{ll}
			\ddot{x}_\gamma(t) + \frac{2\sqrt{\mu}}{\sqrt{1 + \gamma \mu}}\dot{x}_\gamma(t) + \nabla_x \mathcal{L}_\gamma \big(z_\gamma(t), \lambda_\gamma(t)\big) = 0, \\ [2mm]
			\dot{\lambda}_\gamma(t) -  \eta(t)\nabla_\lambda \mathcal{L}_\gamma \big(z_\gamma(t) + \frac{1}{\sqrt{\mu}}\dot{z}_\gamma(t), \lambda_\gamma(t) \big) =0,
		\end{array}
		\right.
	\end{align*}
	where $z_\gamma = x_\gamma + \beta \dot{x}_\gamma$ and $\mathcal{L}_\gamma (z_\gamma, \lambda_\gamma) = f_\gamma (z_\gamma) + \langle \lambda_\gamma, Ax_\gamma -b \rangle$.
	
	Following the Moreau-Yosida regularization reported in \cite[Section 2]{MY} and using arguments similar to those in Theorem \ref{lianxu} and Corollary \ref{coro}, we obtain the following results as $\gamma \rightarrow 0$.

\begin{theorem} \label{lianxu2}
	Let $(x,\lambda): [{t}_{0},+\infty) \rightarrow \mathbb{R}^{n}\times \mathbb{R}^{m}$ be a solution of System \eqref{system1}. Suppose that $0 \leq \beta \leq \frac{\sqrt{10}-1}{3\sqrt{\mu}}$ and $\dot{\eta}(t) \leq \frac{\sqrt{\mu}}{2}\eta(t)$. Then, for any $({x}^{*}, {\lambda }^{*})\in \Omega $, the trajectory $\{(x(t), \lambda(t))\}$ is bounded, and the following results are satisfied:
	\begin{align*}
		\left\{
		\begin{array}{lllll}
			|f(x(t)) - f(x^{*})| = \mathcal{O} \left(\frac{1}{\sqrt{\eta(t)}}\right),~\text{as}~t \rightarrow +\infty, \\ [3mm]
			\|Ax(t)-b\| = \mathcal{O} \left(\frac{1}{\sqrt{\eta(t)}}\right),~\text{as}~t \rightarrow +\infty,\\ [3mm]
			\|x(t) - x^{*}\| = \mathcal{O} \left(\frac{1}{\sqrt{\eta(t)}}\right),~\text{as}~t \rightarrow +\infty,\\ [3mm]
			\|\dot{x}(t)\| = \mathcal{O} \left(\frac{1}{\sqrt{\eta(t)}}\right),~\text{as}~t \rightarrow +\infty.
		\end{array}
		\right.
	\end{align*}
\end{theorem}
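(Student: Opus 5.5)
The plan is to run the Lyapunov argument of Proposition \ref{prop} and Theorem \ref{lianxu} on the nonsmooth inclusion \eqref{system1}. The gradient $\nabla f(z(t))$ is replaced by a measurable selection $\xi(t)\in\partial f(z(t))$, and the descent and strong-convexity inequalities by the subgradient inequality. Concretely, I would write the first line of \eqref{system1} as $\ddot{x}(t)+2\sqrt{\mu}\dot{x}(t)+\xi(t)+A^\top\lambda(t)=0$ with $\xi(t)\in\partial f(z(t))$. I would then set
\[
W(t)=f(z(t))+\langle\lambda^*,Az(t)-b\rangle-\mathcal{L}(x^*,\lambda^*)+\tfrac12\|\sqrt{\mu}(x(t)-x^*)+\dot{x}(t)\|^2+\tfrac{\mu}{4}\|x(t)-x^*\|^2 ,
\]
with energy $E(t)=\eta(t)W(t)+\frac{\sqrt{\mu}}{2}\|\lambda(t)-\lambda^*\|^2$.

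The first and main obstacle is differentiating $t\mapsto f(z(t))$, since $f$ is not differentiable. There are two routes.
\begin{itemize}
\item \emph{Chain rule for convex functions.} For absolutely continuous $z$ with $f\circ z$ absolutely continuous, $\frac{d}{dt}f(z(t))=\langle\xi,\dot z(t)\rangle$ for every $\xi\in\partial f(z(t))$, for a.e.\ $t$ (Br\'ezis' lemma).
\item \emph{Moreau--Yosida regularization, as the paper suggests.} Replace $f$ by $f_\gamma$, which is $C^{1,1}$ and $\frac{\mu}{1+\gamma\mu}$-strongly convex by Lemma \ref{muml}. Theorem \ref{lianxu} then applies verbatim to the regularized system with $\mu_\gamma=\mu/(1+\gamma\mu)$. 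The thresholds become $\beta\le\frac{\sqrt{10}-1}{3\sqrt{\mu_\gamma}}$ and $\dot\eta\le\frac{\sqrt{\mu_\gamma}}{2}\eta$. Both are implied by the stated hypotheses up to a factor tending to $1$, so I would keep $\sqrt{\mu}$ in the coefficients and absorb the $\gamma$-discrepancy into the slack terms.
\end{itemize}
I would try the chain-rule route first, because it avoids passing to the limit $\gamma\to0$. The regularization route additionally needs the constants in $E_\gamma(t_0)$ to be uniform in $\gamma$ and the trajectories $(x_\gamma,\lambda_\gamma)\to(x,\lambda)$ locally uniformly, citing \cite[Section 2]{MY}.

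With the derivative in hand, the computation \eqref{h1}--\eqref{*} goes through line by line. In the step yielding \eqref{fstr}, $\nabla f(x^*)$ is replaced by $-A^\top\lambda^*\in\partial f(x^*)$. The strong convexity inequality $f(x^*)\ge f(z)+\langle\xi,x^*-z\rangle+\frac{\mu}{2}\|z-x^*\|^2$ holds for any $\xi\in\partial f(z)$. Young's inequality uses only $\|\xi+A^\top\lambda\|^2$. This gives \eqref{hinequ} with $\xi(t)+A^\top\lambda(t)$ in place of the gradient, so $\dot E\le0$ a.e. Since $E$ is absolutely continuous, $E(t)\le E(t_0)$. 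Hence we get boundedness, \eqref{2xd}, \eqref{3x}, \eqref{xd}, and $\mathcal{L}(z,\lambda^*)-\mathcal{L}(x^*,\lambda^*)=\mathcal{O}(1/\eta)$. The integration-by-parts identity \eqref{lamda} and Lemma \ref{yinli} need only the dual equation, which is unchanged, so $\|Az(t)-b\|=\mathcal{O}(1/\eta)$ and $\|Ax(t)-b\|=\mathcal{O}(1/\sqrt{\eta})$ follow as before.

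The second delicate point is the objective error $|f(x(t))-f(x^*)|$. The smooth proof used $L$-Lipschitz gradients to compare $f(x)$ with $f(z)$, and that is unavailable here. The lower bound $f(x)-f(x^*)\ge-\|\lambda^*\|\|Ax-b\|=-\mathcal{O}(1/\sqrt{\eta})$ follows from the saddle-point inequality. For the upper bound I would use $f(x)-f(z)\le\langle\zeta,x-z\rangle=-\beta\langle\zeta,\dot x\rangle$ with $\zeta\in\partial f(x(t))$. This needs $\partial f(x(t))$ to stay bounded along the bounded trajectory, which holds if $f$ is finite (then locally Lipschitz) on a neighborhood of the trajectory. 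Combined with \eqref{ff} and \eqref{xd}, this yields $\mathcal{O}(1/\sqrt{\eta})$. If $f$ may take the value $+\infty$, I would instead obtain the rate for the Moreau envelope, $f_\gamma(x)-f(x^*)=\mathcal{O}(1/\sqrt{\eta})$ with constants independent of $\gamma$, and let $\gamma\to0$ using $f_\gamma\uparrow f$. This is the step I expect to require the most care.
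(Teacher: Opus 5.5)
Your main (chain-rule) route is correct, and it differs genuinely from the paper's. The paper gives only a sketch: replace $f$ by $f_\gamma$, which is $C^{1,1}$ and $\frac{\mu}{1+\gamma\mu}$-strongly convex by Lemma~\ref{muml}, apply Theorem~\ref{lianxu} to the regularized system, and let $\gamma\to0$ with a pointer to \cite{MY}. You instead work on the inclusion directly with $g(t)=\xi(t)+A^\top\lambda(t)$. The computation of Proposition~\ref{prop} uses only $\dot z=(1-2\beta\sqrt{\mu})\dot x-\beta g$, the strongly convex subgradient inequality at $z(t)$, $-A^\top\lambda^*\in\partial f(x^*)$ and Young's inequality. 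So it transfers verbatim, and so does the dual argument through Lemma~\ref{yinli}.

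\textbf{What your route buys.} It avoids the limit $\gamma\to0$, which in the paper hides several unverified steps:
\begin{itemize}
\item the problem $\min f_\gamma$ subject to $Ax=b$ has, in general, a different saddle-point set;
\item Theorem~\ref{lianxu} with modulus $\mu_\gamma$ needs extrapolation $1/\sqrt{\mu_\gamma}$ and $\dot\eta\le\frac{\sqrt{\mu_\gamma}}{2}\eta$, which the hypotheses do not give;
\item the constants must be uniform in $\gamma$, and $(x_\gamma,\lambda_\gamma)\to(x,\lambda)$ must be proved;
\item comparing $f(x)$ with $f(z)$ through $\frac{1}{\gamma}$-smoothness degenerates as $\gamma\to0$, whereas your local-Lipschitz bound does not.
\end{itemize}

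\textbf{What the paper's route buys.} Existence comes from Theorem~\ref{unique} for each $\gamma$. You must instead presuppose a solution regular enough for the chain rule. For real-valued $f$ it suffices that $\dot x$ be absolutely continuous: then $f\circ z$ is absolutely continuous, and the subgradient inequality gives $\frac{d}{dt}f(z(t))=\langle\xi(t),\dot z(t)\rangle$ for a.e.\ $t$. For extended-valued $f$, Br\'ezis' lemma requires $\ddot x\in L^2_{\rm loc}$.

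\textbf{The extended-valued fallback.} This step cannot work, because the objective-error claim is then false without an extra hypothesis. Take $n=m=1$, $A=0$, $b=0$, $f(y)=\frac{\mu}{2}y^2+\iota_{[0,+\infty)}(y)$ and $\frac{1}{2\sqrt{\mu}}\le\beta\le\frac{\sqrt{10}-1}{3\sqrt{\mu}}$. Let $x(t)=-e^{-(t-t_0)/\beta}$ and $\lambda\equiv\lambda(t_0)$. Then $z\equiv0$ and
\[
-\ddot x-2\sqrt{\mu}\dot x=\frac{1}{\beta}\Big(\frac{1}{\beta}-2\sqrt{\mu}\Big)e^{-(t-t_0)/\beta}\in(-\infty,0]=\partial f(0),
\]
so this pair solves \eqref{system1}. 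Yet $f(x(t))=+\infty$ for every $t$. Hence any $\gamma$-uniform bound on $f_\gamma$ followed by $f_\gamma\uparrow f$ is impossible, since it would force $f(x(t))<+\infty$. Concretely, $f_\gamma(x(t))=\frac{1}{2\gamma}e^{-2(t-t_0)/\beta}$.

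The right repair is to assume $x(t_0)\in\operatorname{dom}f$. For $\beta>0$, integrate $\dot x=(z-x)/\beta$ to get
\[
x(t)=e^{-(t-t_0)/\beta}x(t_0)+\frac{1}{\beta}\int_{t_0}^{t}e^{-(t-s)/\beta}z(s)\,ds .
\]
(If $\beta=0$, then $x=z$ and the claim is immediate.) Apply Jensen's inequality, and use:
\begin{itemize}
\item $f(z(s))-f(x^*)\le C/\eta(s)$, the analogue of \eqref{ff};
\item $\eta(s)\ge\eta(t)e^{-\frac{\sqrt{\mu}}{2}(t-s)}$;
\item $\frac{1}{\beta}>\frac{\sqrt{\mu}}{2}$.
\end{itemize}
Together these give $f(x(t))-f(x^*)=\mathcal{O}(1/\eta(t))$ without any Lipschitz property of $f$, so the argument also covers the real-valued case. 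Your saddle-point estimate $-\|\lambda^*\|\|Ax(t)-b\|$ supplies the lower bound.
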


\begin{corollary}
	Let $(x,\lambda): [{t}_{0},+\infty) \rightarrow \mathbb{R}^{n}\times \mathbb{R}^{m}$ be a solution of System \eqref{system1}. Suppose that $0 \leq \beta \leq \frac{\sqrt{10}-1}{3\sqrt{\mu}}$ and $\eta(t) = \eta(t_0)e^{\frac{\sqrt{\mu}}{2}(t-t_0)}$. Then, for any $({x}^{*}, {\lambda }^{*})\in \Omega $, the trajectory $\{(x(t), \lambda(t))\}$ is bounded, and the following results are satisfied:
	\begin{align*}
		\left\{
		\begin{array}{llll}
			|f(x(t)) - f(x^{*})| = \mathcal{O}\left(e^{-\frac{\sqrt{\mu}}{4}t}\right),~\text{as}~t \rightarrow +\infty,\\ [3mm]
			\|Ax(t)-b\| = \mathcal{O}\left(e^{-\frac{\sqrt{\mu}}{4}t}\right),~\text{as}~t \rightarrow +\infty,\\ [3mm]
			\|x(t) - x^{*}\| = \mathcal{O}\left(e^{-\frac{\sqrt{\mu}}{4}t}\right),~\text{as}~t \rightarrow +\infty,\\ [3mm]
			\|\dot{x}(t)\| = \mathcal{O}\left(e^{-\frac{\sqrt{\mu}}{4}t}\right),~\text{as}~t \rightarrow +\infty.
		\end{array}
		\right.
	\end{align*}
\end{corollary}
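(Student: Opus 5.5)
This is the non-smooth analogue of Corollary~\ref{coro}. The plan is to derive it from Theorem~\ref{lianxu2} in the same way Corollary~\ref{coro} was derived from Theorem~\ref{lianxu}.

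\emph{Step 1: check the hypotheses of Theorem~\ref{lianxu2}.} The choice $\eta(t)=\eta(t_0)e^{\frac{\sqrt{\mu}}{2}(t-t_0)}$ is positive, non-decreasing and differentiable, and $\lim_{t\to+\infty}\eta(t)=+\infty$. It satisfies
\[
\dot{\eta}(t)=\frac{\sqrt{\mu}}{2}\eta(t),
\]
so the condition $\dot{\eta}(t)\leq\frac{\sqrt{\mu}}{2}\eta(t)$ holds with equality. The constraint $0\le\beta\le\frac{\sqrt{10}-1}{3\sqrt{\mu}}$ is assumed verbatim. Theorem~\ref{lianxu2} therefore applies to any solution of System~\eqref{system1}. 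It gives boundedness of $\{(x(t),\lambda(t))\}$ and the four quantities
\[
|f(x(t))-f(x^*)|,\qquad \|Ax(t)-b\|,\qquad \|x(t)-x^*\|,\qquad \|\dot{x}(t)\|,
\]
each of order $\mathcal{O}\big(1/\sqrt{\eta(t)}\big)$.

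\emph{Step 2: substitute $\eta$.} We have
\[
\frac{1}{\sqrt{\eta(t)}}=\frac{1}{\sqrt{\eta(t_0)}}\,e^{\frac{\sqrt{\mu}}{4}t_0}\,e^{-\frac{\sqrt{\mu}}{4}t}.
\]
The prefactor is a constant independent of $t$, so every $\mathcal{O}\big(1/\sqrt{\eta(t)}\big)$ bound becomes $\mathcal{O}\big(e^{-\frac{\sqrt{\mu}}{4}t}\big)$, which is exactly the claim.

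\emph{Main obstacle.} The real work lies in Theorem~\ref{lianxu2}, which passes through the Moreau envelope $f_\gamma$ and the limit $\gamma\to0$. That step rests on two facts. First, the strong convexity modulus $\frac{\mu}{1+\gamma\mu}$ from Lemma~\ref{muml} must be compatible with the constants used in Proposition~\ref{prop}. Second, the $\mathcal{O}$-constants must stay uniform in $\gamma$. Since Theorem~\ref{lianxu2} is stated earlier, I take it as given, and the corollary itself reduces to Steps~1 and~2.
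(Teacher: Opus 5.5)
Your proposal is correct and follows the paper's route. The paper gives no separate argument for this corollary: just as Corollary~\ref{coro} is obtained from Theorem~\ref{lianxu}, it substitutes the exponential $\eta$, which satisfies $\dot{\eta}(t)=\frac{\sqrt{\mu}}{2}\eta(t)$, into Theorem~\ref{lianxu2} and rewrites $1/\sqrt{\eta(t)}$ as a constant multiple of $e^{-\frac{\sqrt{\mu}}{4}t}$; the issues you raise under the main obstacle concern Theorem~\ref{lianxu2} itself rather than this corollary, so taking that theorem as given, as the paper does, makes your Steps~1 and~2 a complete proof.
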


\section{The Inertial Accelerated Primal-Dual Algorithm}

In this section, via a natural implicit time discretization of System \eqref{system}, we propose an inertial accelerated primal-dual algorithm for solving Problem \eqref{prob}. Under the appropriate assumptions of parameters, we investigate the asymptotic properties of the proposed algorithm.

Firstly, we perform time discretization for System \eqref{system} to obtain the corresponding discrete scheme. Consider the constant stepsize $\sqrt{s}$ and set $t_k=k\sqrt{s}$, $x_k=x(t_k)$, $\lambda_k=\lambda(t_k)$ and $\eta_k=\eta(t_k)$. By implicit time discretization to System \eqref{system}, we obtain
\begin{eqnarray}\label{lis}
	\left\{ \begin{array}{ll}
		& \dfrac{x_{k+1}-2x_{k} + x_{k-1}}{s} + 2\sqrt{\mu}\dfrac{x_{k+1}-x_{k}}{\sqrt{s}} + \nabla_x \mathcal{L}(z_{k+1}, \lambda_{k+1}) = 0, \\ [4mm]
		& \dfrac{\lambda_{k+1}-\lambda_k}{\sqrt{s}} - \eta_k \nabla_\lambda \mathcal{L}\left(z_{k+1}+\dfrac{1}{\sqrt{\mu}}\dfrac{z_{k+1}-z_k}{\sqrt{s}},\lambda_{k+1} \right)= 0,
	\end{array}
	\right.
\end{eqnarray}
where $z_k = x_k + \frac{\beta}{\sqrt{s}}(x_k - x_{k-1})$. Denote $\frac{\beta}{\sqrt{s}}$ and ${\sqrt{s}}\eta_k$ as $\beta$ and $\eta_k$, respectively. Then,  $z_k = x_k + \beta(x_k - x_{k-1})$ and \eqref{lis} becomes
\begin{eqnarray} \label{scheme}
	\left\{ \begin{array}{ll}
		& x_{k+1} = x_{k} + \frac{1}{1+2\sqrt{\mu s}}(x_k - x_{k-1}) - \frac{s}{1+2\sqrt{\mu s}}\nabla_x \mathcal{L}(z_{k+1}, \lambda_{k+1}), \\ [3mm]
		& \lambda_{k+1} = \lambda_{k} + \eta_k \nabla_\lambda \mathcal{L}\Big(z_{k+1}+\frac{1}{\sqrt{\mu s}}(z_{k+1}-z_k),\lambda_{k+1}\Big).
	\end{array}
	\right.
\end{eqnarray}

Based on \eqref{scheme}, we present an inertial accelerated primal-dual algorithm for solving Problem \eqref{prob}.
\begin{algorithm}[H]
	\floatname{algorithm}{Algorithm}
	\caption{Inertial Accelerated Primal-Dual Algorithm}
	\label{kuangjia}
	\textbf{Initialization}: Choose $x_0=x_1=z_1 \in \mathbb{R}^n$, $\lambda_0=\lambda_1 \in \mathbb{R}^m$, $\beta \geq 0$ and $\eta_0=1$.
	
	\ \ \textbf{for} $k = 1,2,\dots$ \textbf{do}
	
	\ \ \ \ $\textbf{Step 1}$: Compute
	\begin{align*}
		\bar{x}_{k} = x_k + \frac{1}{1+2\sqrt{\mu s}}(x_k - x_{k-1})~\text{and}~z_k = x_k + \beta(x_k - x_{k-1}).
	\end{align*}
	
	\ \ \ \ $\textbf{Step 2}$: Choose $\eta_k > 0$ and update the primal variable
	\begin{align*}
		x_{k+1} =\underset{x\in\mathbb{R}^n}{\arg\min} &\left\{
		f\big(x+\beta(x-x_k)\big) + \frac{(1+\beta)(1+2\sqrt{\mu s})}{2s}\left\|x - \bar{x}_k + \frac{s}{1+2\sqrt{\mu s}}A^\top (\lambda_k -\eta_k b) \right\|^2 \right.\\
		&~~~~~~~~~~~~~~~~~~~~~~~~~\left. +\frac{(1+\beta)^2 (1+\sqrt{\mu s})}{2\sqrt{\mu s}}\eta_k\left\|A\left(x-\frac{\beta}{1+\beta}x_k\right) - \frac{1}{(1+\beta)(1+\sqrt{\mu s})}Az_k\right\|^2 \right\}.
	\end{align*}
	
	\ \ \ \ $\textbf{Step 3}$: Compute $z_{k+1} = x_{k+1} + \beta(x_{k+1} - x_k)$ and update the dual variable
	\begin{align*}
		\lambda_{k+1} = \lambda_{k} + \eta_k \left(Az_{k+1} - b + \frac{1}{\sqrt{\mu s}}A(z_{k+1} - z_k) \right).
	\end{align*}
	
	\ \ \ \ \textbf{If} a stopping condition is satisfied \textbf{then}
	
	\ \ \ \ \ \ \textbf{return} $\left( x_{k+1}, \lambda_{k+1} \right)$.
	
	\ \ \ \ \textbf{end}
	
	\ \ \textbf{end for}
\end{algorithm}

The following proposition shows that Algorithm \ref{kuangjia} is equivalent to the time discretization scheme \eqref{scheme}.
\begin{proposition} \label{equ}
	Algorithm \ref{kuangjia} is equivalent to the scheme \eqref{scheme}.	
\end{proposition}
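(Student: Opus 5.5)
The plan is to compare the first-order optimality condition of the subproblem in Step 2 with the first equation of \eqref{scheme}, using Step 3 to eliminate the dual variable. Step 3 of Algorithm \ref{kuangjia} is literally the second equation of \eqref{scheme}, since $\nabla_\lambda \mathcal{L}(w,\lambda)=Aw-b$ does not depend on $\lambda$. Everything therefore reduces to showing that the primal update in Step 2 is equivalent to the first line of \eqref{scheme}, once $\lambda_{k+1}$ is defined by Step 3.

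\textbf{Existence and uniqueness of the minimizer.} Let $\Phi_k(x)$ denote the objective in Step 2. Because $\beta\ge 0$, the map $x\mapsto x+\beta(x-x_k)$ is affine and injective. Hence $f(x+\beta(x-x_k))$ is convex, and it is even strongly convex since $f$ is $\mu$-strongly convex and $1+\beta>0$. The other two terms of $\Phi_k$ are convex quadratics, the first of them strongly convex. So $\Phi_k$ is strongly convex and differentiable, it has a unique minimizer, and $x_{k+1}$ is characterized by $\nabla\Phi_k(x_{k+1})=0$.

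\textbf{Expanding the optimality condition.} We compute $\nabla\Phi_k(x_{k+1})$ term by term, with $z_{k+1}=(1+\beta)x_{k+1}-\beta x_k$.
\begin{itemize}
\item The first term gives $(1+\beta)\nabla f(z_{k+1})$.
\item The second gives $(1+\beta)\frac{1+2\sqrt{\mu s}}{s}(x_{k+1}-\bar x_k)+(1+\beta)A^\top(\lambda_k-\eta_k b)$.
\item For the third, note that $(1+\beta)x_{k+1}-\beta x_k=z_{k+1}$. Then
\begin{align*}
(1+\beta)\Bigl(A\bigl(x_{k+1}-\tfrac{\beta}{1+\beta}x_k\bigr)-\tfrac{1}{(1+\beta)(1+\sqrt{\mu s})}Az_k\Bigr)=Az_{k+1}-\tfrac{1}{1+\sqrt{\mu s}}Az_k .
\end{align*}
Multiplying by $\frac{(1+\beta)(1+\sqrt{\mu s})}{\sqrt{\mu s}}\eta_kA^\top$, the third term becomes
\begin{align*}
(1+\beta)\eta_kA^\top\Bigl(Az_{k+1}+\tfrac{1}{\sqrt{\mu s}}A(z_{k+1}-z_k)\Bigr).
\end{align*}
\end{itemize}
Collecting the $A^\top$-terms and dividing by $1+\beta$, Step 3 gives
\begin{align*}
\lambda_k+\eta_k\Bigl(Az_{k+1}-b+\tfrac{1}{\sqrt{\mu s}}A(z_{k+1}-z_k)\Bigr)=\lambda_{k+1}.
\end{align*}
So the optimality condition reads
\begin{align*}
\nabla f(z_{k+1})+A^\top\lambda_{k+1}+\frac{1+2\sqrt{\mu s}}{s}\,(x_{k+1}-\bar x_k)=0 .
\end{align*}
Since $\nabla_x\mathcal{L}(z_{k+1},\lambda_{k+1})=\nabla f(z_{k+1})+A^\top\lambda_{k+1}$ and $\bar x_k=x_k+\frac{1}{1+2\sqrt{\mu s}}(x_k-x_{k-1})$, this is exactly the first line of \eqref{scheme}.

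\textbf{Reverse direction.} Conversely, suppose $(x_{k+1},\lambda_{k+1})$ satisfies \eqref{scheme}. Reversing the same algebra shows $\nabla\Phi_k(x_{k+1})=0$, and uniqueness of the minimizer then identifies $x_{k+1}$ with the output of Step 2. This gives the equivalence.

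\textbf{Main obstacle.} The only delicate step is the bookkeeping in the third quadratic term: the coefficients $\frac{\beta}{1+\beta}$ and $\frac{1}{(1+\beta)(1+\sqrt{\mu s})}$ must combine into $z_{k+1}$ and $\frac{1}{\sqrt{\mu s}}(z_{k+1}-z_k)$. I would check this identity first, directly from $(1+\sqrt{\mu s})Az_{k+1}-Az_k=\sqrt{\mu s}\,Az_{k+1}+A(z_{k+1}-z_k)$.
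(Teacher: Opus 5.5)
Your proposal is correct and follows essentially the same route as the paper. Both write the first-order optimality condition of the Step 2 subproblem, absorb the $A^\top$-terms into $A^\top\lambda_{k+1}$ via Step 3, and use Step 1 to recover the first line of \eqref{scheme}, with Step 3 giving the second line verbatim; your coefficient bookkeeping (the third term collapsing to $(1+\beta)\eta_kA^\top\bigl(Az_{k+1}+\tfrac{1}{\sqrt{\mu s}}A(z_{k+1}-z_k)\bigr)$) checks out, and the strong-convexity/uniqueness argument and reverse direction add rigor that the paper leaves implicit.
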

\begin{proof}
	First, it follows from Step 2 of Algorithm \ref{kuangjia} that
	\begin{align*}
		0 = &~ s\nabla f(x_{k+1}+\beta(x_{k+1}-x_k)) + \left((1+2\sqrt{\mu s})I+ \frac{\sqrt{s}(1+\beta)(1+\sqrt{\mu s})}{\sqrt{\mu}}\eta_k A^\top A\right)x_{k+1} \\
		& - (1+2\sqrt{\mu s})\bar{x}_k + sA^\top \lambda_k - s\eta_k A^\top b - \frac{\beta\sqrt{s}(1+\sqrt{\mu s})}{\sqrt{\mu}}\eta_k A^\top Ax_k - \frac{\sqrt{s}}{\sqrt{\mu}}\eta_k A^\top Az_k.
	\end{align*}
	This together with Step 1 of Algorithm \ref{kuangjia} yields
	\begin{align*}
		& (1+2\sqrt{\mu s})(x_{k+1}-x_k) \\
		= &~ x_k - x_{k-1} - s\nabla f(z_{k+1}) \\
		& - sA^\top \left[\lambda_k + \eta_k\left( \frac{(1+\beta)(1+\sqrt{\mu s})}{\sqrt{\mu s}}Ax_{k+1} -\frac{\beta(1+\sqrt{\mu s})}{\sqrt{\mu s}}Ax_k - \frac{1}{\sqrt{\mu s}} Az_k - b\right)\right] \\
		= &~ x_k - x_{k-1} - s\nabla f(z_{k+1}) -sA^\top \left[ \lambda_{k} + \eta_k \left(Az_{k+1} - b + \frac{1}{\sqrt{\mu s}}A(z_{k+1} - z_k) \right) \right] .
	\end{align*}
	Using Step 3 of Algorithm \ref{kuangjia}, it follows that
	\begin{align}\label{@@}
		\begin{split}
			(1+2\sqrt{\mu s})(x_{k+1}-x_k) = &~ x_k - x_{k-1} - s\nabla f(z_{k+1}) -sA^\top \lambda_{k+1} \\
			= & ~ x_k - x_{k-1} - s\nabla_x \mathcal{L} (z_{k+1}, \lambda_{k+1}).
		\end{split}
	\end{align}
	This corresponds to the first equation in \eqref{scheme}.
	
	Clearly, Step 3 of Algorithm \ref{kuangjia} corresponds to the second equation in \eqref{scheme}.
	
	In summary, Algorithm \ref{kuangjia} is equivalent to the scheme described in \eqref{scheme}. This completes the proof. \qed
\end{proof}

In the following, we investigate the fast convergence properties of Algorithm \ref{kuangjia}.
\begin{proposition} \label{prop2}
	Let $\{(x_k,\lambda_k)\}_{k \geq 1}$ be the sequence generated by Algorithm \ref{kuangjia} and let $(x^*, \lambda^*)\in \Omega $. Define the positive sequence $W_k$ as follows:
	\begin{equation} \label{hfunc2}
		\begin{split}
			W_k := \mathcal{L}(z_k, \lambda^*)-\mathcal{L}(x^*, \lambda^*) + \frac{1}{2}\|v_k\|^2 + \frac{\mu}{4}\|x_k-x^*\|^2,
		\end{split}
	\end{equation}
	where $z_k=x_k+\beta(x_k - x_{k-1})$ and $v_k = \sqrt{\mu}(x_k-x^*) + \frac{1}{\sqrt{s}}(x_k - x_{k-1})$. Then, for any $k \geq 1$, it holds that
	\begin{equation} \label{hinequ2}
		\begin{split}
			& W_{k+1} - W_k + \frac{\sqrt{\mu s}}{2} W_{k+1} \\
			\leq &~ -\frac{s}{6}(1+4\beta)\|\nabla_x \mathcal{L}(z_{k+1}, \lambda_{k+1})\|^2 - \frac{3\sqrt{\mu}}{4}\Big(\frac{1}{\sqrt{s}} - \beta\sqrt{\mu} - \beta^2 \mu \sqrt{s}\Big)\|x_{k+1}-x_k\|^2 \\
			& - \frac{\mu}{2}\|z_{k+1}-z_k\|^2 -\frac{\sqrt{\mu s}}{\eta_k} \langle \lambda_{k+1} - \lambda^*, \lambda_{k+1} - \lambda_k \rangle.
		\end{split}
	\end{equation}
\end{proposition}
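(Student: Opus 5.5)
We will mirror the continuous-time argument of Proposition \ref{prop}, replacing derivatives by differences and the chain rule by the two-sided inequality \eqref{L} for $f$ (together with the exact quadratic identity $\frac12\|a\|^2-\frac12\|b\|^2=\langle a,a-b\rangle-\frac12\|a-b\|^2$). Throughout we write $g_{k+1}:=\nabla_x\mathcal{L}(z_{k+1},\lambda_{k+1})$. The first equation of \eqref{scheme}, in the form \eqref{@@}, gives
\[
v_{k+1}-v_k=\sqrt{\mu}(x_{k+1}-x_k)+\tfrac{1}{\sqrt s}\big((x_{k+1}-x_k)-(x_k-x_{k-1})\big)=-\sqrt{\mu}(x_{k+1}-x_k)-\sqrt{s}\,g_{k+1}.
\]
This is the discrete analogue of $\sqrt{\mu}\dot x+\ddot x=-\sqrt{\mu}\dot x-\nabla_x\mathcal{L}$.

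\textbf{Step 1 (function-value part).} By strong convexity of $f$ applied at $z_{k+1}$,
\[
\mathcal{L}(z_{k+1},\lambda^*)-\mathcal{L}(z_k,\lambda^*)\le\langle\nabla_x\mathcal{L}(z_{k+1},\lambda^*),z_{k+1}-z_k\rangle-\tfrac{\mu}{2}\|z_{k+1}-z_k\|^2 .
\]
This is the source of the term $-\frac{\mu}{2}\|z_{k+1}-z_k\|^2$ in \eqref{hinequ2}. We then split $\nabla_x\mathcal{L}(z_{k+1},\lambda^*)=g_{k+1}-A^\top(\lambda_{k+1}-\lambda^*)$ and write $z_{k+1}-z_k=(x_{k+1}-x_k)+\beta\big((x_{k+1}-x_k)-(x_k-x_{k-1})\big)$. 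The second difference is expressed through \eqref{@@}, which yields $g_{k+1}$-terms as in \eqref{h1}.

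\textbf{Step 2 (kinetic and distance parts).} Expand $\frac12\|v_{k+1}\|^2-\frac12\|v_k\|^2=\langle v_{k+1},v_{k+1}-v_k\rangle-\frac12\|v_{k+1}-v_k\|^2$ and $\frac{\mu}{4}(\|x_{k+1}-x^*\|^2-\|x_k-x^*\|^2)$ in the same way, and substitute the formula for $v_{k+1}-v_k$. The term $-\sqrt{\mu s}\langle g_{k+1},x_{k+1}-x^*\rangle$ is handled exactly as in the continuous proof. Shift it to $z_{k+1}-x^*$, use strong convexity to produce $-\sqrt{\mu s}\big(\mathcal{L}(z_{k+1},\lambda^*)-\mathcal{L}^*\big)-\frac{\mu\sqrt{\mu s}}{2}\|z_{k+1}-x^*\|^2$, and keep the $A^\top(\lambda_{k+1}-\lambda^*)$ remainder. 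Next add $\frac{\sqrt{\mu s}}{2}W_{k+1}$. We then use \eqref{fstr} at $z_{k+1}$, namely $\mathcal{L}(z_{k+1},\lambda^*)-\mathcal{L}^*\ge\frac{\mu}{2}\|z_{k+1}-x^*\|^2$, together with $\|z_{k+1}-x^*\|^2\ge\frac12\|x_{k+1}-x^*\|^2-\beta^2\|x_{k+1}-x_k\|^2$. These cancel the positive $\|x_{k+1}-x^*\|^2$ contributions and generate the $\beta^2\mu\sqrt{s}$ piece of the coefficient of $\|x_{k+1}-x_k\|^2$.

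\textbf{Step 3 (dual terms).} Collect all terms involving $\lambda_{k+1}-\lambda^*$. They should combine to
\[
-\sqrt{\mu s}\,\big\langle\lambda_{k+1}-\lambda^*,\,A z_{k+1}-b+\tfrac{1}{\sqrt{\mu s}}A(z_{k+1}-z_k)\big\rangle ,
\]
using $Ax^*=b$; this mirrors the identity closing the proof of Proposition \ref{prop}. Step 3 of Algorithm \ref{kuangjia} turns this into $-\frac{\sqrt{\mu s}}{\eta_k}\langle\lambda_{k+1}-\lambda^*,\lambda_{k+1}-\lambda_k\rangle$.

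\textbf{Step 4 (remaining quadratic form; main obstacle).} What remains is a quadratic form in $g_{k+1}$ and $x_{k+1}-x_k$, built from the negative terms $-\frac12\|v_{k+1}-v_k\|^2$, $-\beta s\|g_{k+1}\|^2$ and $-\sqrt{\mu s}\|x_{k+1}-x_k\|^2$ and the cross terms $\langle g_{k+1},x_{k+1}-x_k\rangle$. We must show it is bounded by $-\frac{s}{6}(1+4\beta)\|g_{k+1}\|^2-\frac{3\sqrt{\mu}}{4}\big(\frac{1}{\sqrt s}-\beta\sqrt{\mu}-\beta^2\mu\sqrt s\big)\|x_{k+1}-x_k\|^2$. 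Since the Step 1 expansion introduces $x_k-x_{k-1}$, we first rewrite it via \eqref{@@} as $(1+2\sqrt{\mu s})(x_{k+1}-x_k)+s\,g_{k+1}$, so that only $g_{k+1}$ and $x_{k+1}-x_k$ appear. The cross terms are then absorbed by Young's inequality with weights tuned to leave exactly the fractions $\frac16$ and $\frac34$. Getting these constants right, in particular extracting $\frac{s}{6}(1+4\beta)$ from $-\frac{s}{2}\|g_{k+1}\|^2$ and $-\beta s\|g_{k+1}\|^2$ after the Young splits, is the delicate bookkeeping step. If needed, I would parametrize the Young weights and solve for them so that the coefficients match.
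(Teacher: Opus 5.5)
Your proposal follows the paper's proof essentially step for step: the same identity $v_{k+1}-v_k=-\sqrt{\mu}(x_{k+1}-x_k)-\sqrt{s}\,g_{k+1}$, strong convexity at $z_{k+1}$ used three times (against $z_k$, against $x^*$ after shifting from $x_{k+1}$ to $z_{k+1}$, and via \eqref{listr}), the bound $\|z_{k+1}-x^*\|^2\ge\frac12\|x_{k+1}-x^*\|^2-\beta^2\|x_{k+1}-x_k\|^2$, and the regrouping of the dual terms through Step 3 of Algorithm \ref{kuangjia}. The Young weights you leave open are fixed by the single split $\sqrt{\mu s}(1+\beta)|\langle g_{k+1},x_{k+1}-x_k\rangle|\le\frac{s}{3}(1+\beta)\|g_{k+1}\|^2+\frac{3\mu}{4}(1+\beta)\|x_{k+1}-x_k\|^2$, which gives exactly $\frac{s}{6}(1+4\beta)$ and the stated coefficient of $\|x_{k+1}-x_k\|^2$; one small slip is that the kinetic term contributes $-\frac{\sqrt{\mu}}{\sqrt{s}}\|x_{k+1}-x_k\|^2$, not $-\sqrt{\mu s}\|x_{k+1}-x_k\|^2$.
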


\begin{proof}
	Clearly, from the first equation of \eqref{scheme}, we obtain
	\begin{align} \label{step2}
		x_{k+1} - 2x_k + x_{k-1} = - 2\sqrt{\mu s}(x_{k+1}-x_k) - s \nabla_x \mathcal{L}(z_{k+1}, \lambda_{k+1}).
	\end{align}
	Together with the definition of $v_k$ and \eqref{step2}, it yields
	\begin{align*}
		\begin{split}
			v_{k+1} - v_k = &~ \sqrt{\mu}(x_{k+1}-x_k) + \frac{1}{\sqrt{s}}(x_{k+1} - 2x_k + x_{k-1}) \\
			= &~ - \sqrt{\mu}(x_{k+1}-x_k) -  \sqrt{s} \nabla_x \mathcal{L}(z_{k+1}, \lambda_{k+1}).
		\end{split}
	\end{align*}
	Thus,
	\begin{align} \label{vpingf}
		\begin{split}
			& \frac{1}{2}\|v_{k+1}\|^2 - \frac{1}{2}\|v_k\|^2 \\
			= &~ \langle v_{k+1} - v_k, v_{k+1} \rangle - \frac{1}{2}\|v_{k+1} - v_k\|^2 \\
			= &~ \left\langle - \sqrt{\mu}(x_{k+1}-x_k) - \sqrt{s} \nabla_x \mathcal{L}(z_{k+1}, \lambda_{k+1}), \sqrt{\mu}(x_{k+1}-x^*) +\frac{1}{\sqrt{s}} (x_{k+1} - x_k) \right\rangle  \\
			& - \frac{1}{2}\|- \sqrt{\mu}(x_{k+1}-x_k) - \sqrt{s}\nabla_x \mathcal{L}(z_{k+1}, \lambda_{k+1})\|^2 \\
			= &~ -\mu \langle x_{k+1}-x_k, x_{k+1}-x^* \rangle - \frac{\sqrt{\mu}}{\sqrt{s}}\|x_{k+1}-x_k\|^2 \\
			& - \sqrt{\mu s} \langle \nabla_x \mathcal{L}(z_{k+1}, \lambda_{k+1}), x_{k+1}-x^* \rangle \\
			& - \langle \nabla_x \mathcal{L}(z_{k+1}, \lambda_{k+1}), x_{k+1}-x_k \rangle - \frac{\mu}{2}\|x_{k+1}-x_k\|^2 \\
			& - \frac{s}{2}\|\nabla_x \mathcal{L}(z_{k+1}, \lambda_{k+1})\|^2  - \sqrt{\mu s} \langle \nabla_x \mathcal{L}(z_{k+1}, \lambda_{k+1}), x_{k+1}-x_k \rangle \\
			= &~ -\mu \langle x_{k+1}-x_k, x_{k+1}-x^* \rangle - \Big( \frac{\sqrt{\mu}}{\sqrt{s}} + \frac{\mu}{2} \Big)\|x_{k+1}-x_k\|^2 \\
			& - \sqrt{\mu s} \langle \nabla_x \mathcal{L}(z_{k+1}, \lambda_{k+1}), x_{k+1}-x^* \rangle  - \frac{s}{2}\|\nabla_x \mathcal{L}(z_{k+1}, \lambda_{k+1})\|^2 \\
			& - (1+\sqrt{\mu s}) \langle \nabla_x \mathcal{L}(z_{k+1}, \lambda_{k+1}), x_{k+1}-x_k \rangle.
		\end{split}
	\end{align}
	By the $\mu$-strong convexity of $f$, it follows from \eqref{L} that for any $x, y \in \mathbb{R}^n$,
	\begin{equation} \label{strl}
		\begin{split}
			\mathcal{L}(y, \lambda^*) - \mathcal{L}(x, \lambda^*)
			= &~ f(y) + \langle \lambda^*, Ay-b \rangle - f(x) - \langle \lambda^*, Ax-b \rangle\\
			\geq &~ \langle \nabla f(x), y-x \rangle + \frac{\mu}{2}\|y-x\|^2 + \langle \lambda^*, A(y-x) \rangle \\
			=  &~ \langle \nabla_x \mathcal{L}(x, \lambda^*), y-x \rangle + \frac{\mu}{2}\|y-x\|^2.
		\end{split}
	\end{equation}
	For \eqref{strl}, set $y=z_k$ and $x=z_{k+1}$. Then,
	\begin{equation}
		\begin{split} \label{llz}
			\mathcal{L}(z_{k+1}, \lambda^*) - \mathcal{L}(z_k, \lambda^*)
			\leq \langle \nabla_x \mathcal{L}(z_{k+1}, \lambda^*), z_{k+1}-z_k \rangle - \frac{\mu}{2}\|z_{k+1}-z_k\|^2.
		\end{split}
	\end{equation}
	Combining \eqref{hfunc2}, \eqref{vpingf} and \eqref{llz}, we have
	\begin{equation} \label{h-h}
		\begin{split}
			W_{k+1} - W_k = &~  \mathcal{L}(z_{k+1}, \lambda^*) - \mathcal{L}(z_k, \lambda^*) + \frac{1}{2}\|v_{k+1}\|^2 - \frac{1}{2}\|v_k\|^2 \\
			& + \frac{\mu}{4}\|x_{k+1}-x^*\|^2 - \frac{\mu}{4}\|x_k-x^*\|^2 \\
			= &~  \mathcal{L}(z_{k+1}, \lambda^*) - \mathcal{L}(z_k, \lambda^*) + \frac{1}{2}\|v_{k+1}\|^2 - \frac{1}{2}\|v_k\|^2 \\
			& + \frac{\mu}{2}\langle x_{k+1}-x_k, x_{k+1}-x^* \rangle - \frac{\mu}{4}\|x_{k+1}-x_k\|^2 \\
			\leq &~ \langle \nabla_x \mathcal{L}(z_{k+1}, \lambda^*), z_{k+1}-z_k \rangle - \frac{\mu}{2}\|z_{k+1}-z_k\|^2 \\
			&~ -\frac{\mu}{2} \langle x_{k+1}-x_k, x_{k+1}-x^* \rangle - \left( \frac{\sqrt{\mu}}{\sqrt{s}} + \frac{3\mu}{4} \right)\|x_{k+1}-x_k\|^2 \\
			& - \sqrt{\mu s} \langle \nabla_x \mathcal{L}(z_{k+1}, \lambda_{k+1}), x_{k+1}-x^* \rangle  - \frac{s}{2}\|\nabla_x \mathcal{L}(z_{k+1}, \lambda_{k+1})\|^2 \\
			& - (1+\sqrt{\mu s}) \langle \nabla_x \mathcal{L}(z_{k+1}, \lambda_{k+1}), x_{k+1}-x_k \rangle.
		\end{split}
	\end{equation}
	From \eqref{step2}, it follows that
	\begin{equation} \label{**}
		\begin{split}
			z_{k+1} - z_k = &~ x_{k+1} - x_k + \beta(x_{k+1} - 2x_k + x_{k-1}) \\
			= &~ (1- 2\beta\sqrt{\mu s})(x_{k+1}-x_k) - \beta s\nabla_x \mathcal{L}(z_{k+1}, \lambda_{k+1}).
		\end{split}
	\end{equation}
	Note that $\nabla_x \mathcal{L}(z_{k+1}, \lambda^*)=\nabla_x \mathcal{L}(z_{k+1}, \lambda_{k+1})-A^\top(\lambda_{k+1}-\lambda^*)$. Then, from \eqref{h-h} and \eqref{**}, we obtain
	\begin{equation} \label{h-h2}
		\begin{split}
			& W_{k+1} - W_k \\
			\leq &~ \langle \nabla_x \mathcal{L}(z_{k+1}, \lambda_{k+1}), (1- 2\beta\sqrt{\mu s})(x_{k+1}-x_k) - \beta s\nabla_x \mathcal{L}(z_{k+1}, \lambda_{k+1}) \rangle \\
			& - \langle A^\top (\lambda_{k+1} - \lambda^*), (1- 2\beta\sqrt{\mu s})(x_{k+1}-x_k) - \beta s\nabla_x \mathcal{L}(z_{k+1}, \lambda_{k+1}) \rangle \\
			& - \frac{\mu}{2}\|z_{k+1}-z_k\|^2 -\frac{\mu}{2} \langle x_{k+1}-x_k, x_{k+1}-x^* \rangle \\
			& - \left( \frac{\sqrt{\mu}}{\sqrt{s}} + \frac{3\mu}{4} \right)\|x_{k+1}-x_k\|^2 - \sqrt{\mu s} \langle \nabla_x \mathcal{L}(z_{k+1}, \lambda^*), x_{k+1}-x^* \rangle \\
			& - \sqrt{\mu s} \langle A^\top (\lambda_{k+1} - \lambda^*), x_{k+1}-x^* \rangle - \frac{s}{2}\|\nabla_x \mathcal{L}(z_{k+1}, \lambda_{k+1})\|^2 \\
			& - (1+\sqrt{\mu s}) \langle \nabla_x \mathcal{L}(z_{k+1}, \lambda_{k+1}), x_{k+1}-x_k \rangle \\
			= &~ -\sqrt{\mu s}(1 + 2\beta) \langle \nabla_x \mathcal{L}(z_{k+1}, \lambda_{k+1}), x_{k+1}-x_k \rangle \\
			& - \Big(\frac{s}{2} + \beta s \Big)\| \nabla_x \mathcal{L}(z_{k+1}, \lambda_{k+1})\|^2 \\
			& - (1- 2\beta\sqrt{\mu s}) \langle A^\top (\lambda_{k+1} - \lambda^*), x_{k+1}-x_k \rangle \\
			& + \beta s\langle A^\top (\lambda_{k+1} - \lambda^*), \nabla_x \mathcal{L}(z_{k+1}, \lambda_{k+1}) \rangle - \frac{\mu}{2}\|z_{k+1}-z_k\|^2 \\
			& -\frac{\mu}{2} \langle x_{k+1}-x_k, x_{k+1}-x^* \rangle - \left( \frac{\sqrt{\mu}}{\sqrt{s}} + \frac{3\mu}{4} \right)\|x_{k+1}-x_k\|^2 \\
			& - \sqrt{\mu s} \langle \nabla_x \mathcal{L}(z_{k+1}, \lambda^*), x_{k+1}-x^* \rangle - \sqrt{\mu s} \langle A^\top (\lambda_{k+1} - \lambda^*), x_{k+1}-x^* \rangle.
		\end{split}
	\end{equation}
	Together with \eqref{strl} and $\nabla_x \mathcal{L}(z_{k+1}, \lambda^*)=\nabla_x \mathcal{L}(z_{k+1}, \lambda_{k+1})-A^\top(\lambda_{k+1}-\lambda^*)$, it gives
	\begin{equation*}
		\begin{split}
			& - \langle \nabla_x \mathcal{L}(z_{k+1}, \lambda^*), x_{k+1}-x^* \rangle \\
			= &~ - \langle \nabla_x \mathcal{L}(z_{k+1}, \lambda^*), z_{k+1}-x^* \rangle + \beta \langle \nabla_x \mathcal{L}(z_{k+1}, \lambda^*), x_{k+1}-x_k \rangle \\
			\leq &~ \mathcal{L}(x^*, \lambda^*) - \mathcal{L}(z_{k+1}, \lambda^*) - \frac{\mu}{2}\|z_{k+1}-x^*\|^2 \\
			& + \beta \langle \nabla_x \mathcal{L}(z_{k+1}, \lambda_{k+1}), x_{k+1}-x_k \rangle - \beta \langle A^\top(\lambda_{k+1} - \lambda^*), x_{k+1}-x_k \rangle.
		\end{split}
	\end{equation*}
	Substituting the above inequality into \eqref{h-h2} yields
	\begin{equation*}
		\begin{split}
			W_{k+1} - W_k
			\leq &~ -\sqrt{\mu s}(1 + \beta) \langle \nabla_x \mathcal{L}(z_{k+1}, \lambda_{k+1}), x_{k+1}-x_k \rangle \\
			& - \Big(\frac{s}{2} + \beta s \Big)\|\nabla_x \mathcal{L}(z_{k+1}, \lambda_{k+1})\|^2 \\
			& - (1- \beta\sqrt{\mu s}) \langle A^\top (\lambda_{k+1} - \lambda^*), x_{k+1}-x_k \rangle \\
			& + \beta s\langle A^\top (\lambda_{k+1} - \lambda^*), \nabla_x \mathcal{L}(z_{k+1}, \lambda_{k+1}) \rangle - \frac{\mu}{2}\|z_{k+1}-z_k\|^2 \\
			& -\frac{\mu}{2} \langle x_{k+1}-x_k, x_{k+1}-x^* \rangle - \left( \frac{\sqrt{\mu}}{\sqrt{s}} + \frac{3\mu}{4} \right)\|x_{k+1}-x_k\|^2 \\
			& - \sqrt{\mu s}\big(\mathcal{L}(z_{k+1}, \lambda^*) - \mathcal{L}(x^*, \lambda^*) \big) - \frac{\mu\sqrt{\mu s}}{2}\|z_{k+1}-x^*\|^2 \\
			& - \sqrt{\mu s} \langle A^\top (\lambda_{k+1} - \lambda^*), x_{k+1}-x^* \rangle.
		\end{split}
	\end{equation*}
	Note that
	\begin{equation*}
		\begin{split}
			\frac{\sqrt{\mu s}}{2} W_{k+1} = &~ \frac{\sqrt{\mu s}}{2} \big(\mathcal{L}(z_{k+1}, \lambda^*)-\mathcal{L}(x^*, \lambda^*)\big) \\
			& + \frac{\sqrt{\mu s}}{4}\left\|\sqrt{\mu}(x_{k+1}-x^*) + \frac{1}{\sqrt{s}} (x_{k+1} - x_k)\right\|^2 +\frac{\mu\sqrt{\mu s}}{8}\|x_{k+1}-x^*\|^2 \\
			= &~ \frac{\sqrt{\mu s}}{2} \big(\mathcal{L}(z_{k+1}, \lambda^*)-\mathcal{L}(x^*, \lambda^*)\big) + \frac{3\mu\sqrt{\mu s}}{8}\|x_{k+1}-x^*\|^2 \\
			& + \frac{\sqrt{\mu}}{4\sqrt{s}}\|x_{k+1} - x_k\|^2 + \frac{\mu}{2}\langle x_{k+1} - x_k, x_{k+1}-x^* \rangle.
		\end{split}
	\end{equation*}
	Thus,
	\begin{equation*}
		\begin{split}
			& W_{k+1} - W_k + \frac{\sqrt{\mu s}}{2} W_{k+1} \\
			\leq &~ -\sqrt{\mu s}(1 + \beta) \langle \nabla_x \mathcal{L}(z_{k+1}, \lambda_{k+1}), x_{k+1}-x_k \rangle - \Big(\frac{s}{2} + \beta s \Big)\| \nabla_x \mathcal{L}(z_{k+1}, \lambda_{k+1})\|^2 \\
			& - (1- \beta\sqrt{\mu s}) \langle A^\top (\lambda_{k+1} - \lambda^*), x_{k+1}-x_k \rangle \\
			& + \beta s \langle A^\top (\lambda_{k+1} - \lambda^*), \nabla_x \mathcal{L}(z_{k+1}, \lambda_{k+1}) \rangle - \frac{\mu}{2}\|z_{k+1}-z_k\|^2 \\
			& - \left( \frac{3\sqrt{\mu}}{4\sqrt{s}} + \frac{3\mu}{4} \right)\|x_{k+1}-x_k\|^2 - \frac{\sqrt{\mu s}}{2}\big(\mathcal{L}(z_{k+1}, \lambda^*) - \mathcal{L}(x^*, \lambda^*) \big) \\
			& - \frac{\mu\sqrt{\mu s}}{2}\|z_{k+1}-x^*\|^2 - \sqrt{\mu s} \langle A^\top (\lambda_{k+1} - \lambda^*), x_{k+1}-x^* \rangle \\
			& + \frac{3\mu\sqrt{\mu s}}{8}\|x_{k+1}-x^*\|^2.
		\end{split}
	\end{equation*}
	Combining \eqref{kkt} with \eqref{strl}, we deduce that
	\begin{align}\label{listr}
		\begin{split}
			\mathcal{L}(z_{k+1}, \lambda^*)-\mathcal{L}(x^*, \lambda^*)
			\geq \frac{\mu}{2}\|z_{k+1}-x^*\|^2.
		\end{split}
	\end{align}
	Therefore,
	\begin{equation} \label{h-h4}
		\begin{split}
			& W_{k+1} - W_k + \frac{\sqrt{\mu s}}{2} W_{k+1} \\
			\leq &~ -\sqrt{\mu s}(1 + \beta) \langle \nabla_x \mathcal{L}(z_{k+1}, \lambda_{k+1}), x_{k+1}-x_k \rangle - \Big(\frac{s}{2} + \beta s \Big)\|\nabla_x \mathcal{L}(z_{k+1}, \lambda_{k+1})\|^2 \\
			& - (1- \beta\sqrt{\mu s}) \langle A^\top (\lambda_{k+1} - \lambda^*), x_{k+1}-x_k \rangle \\
			& + \beta s \langle A^\top (\lambda_{k+1} - \lambda^*), \nabla_x \mathcal{L}(z_{k+1}, \lambda_{k+1}) \rangle - \frac{\mu}{2}\|z_{k+1}-z_k\|^2 \\
			& - \left( \frac{3\sqrt{\mu}}{4\sqrt{s}} + \frac{3\mu}{4} \right)\|x_{k+1}-x_k\|^2 - \frac{3\mu\sqrt{\mu s}}{4}\|z_{k+1}-x^*\|^2 \\
			& - \sqrt{\mu s} \langle A^\top (\lambda_{k+1} - \lambda^*), x_{k+1}-x^* \rangle + \frac{3\mu\sqrt{\mu s}}{8}\|x_{k+1}-x^*\|^2.
		\end{split}
	\end{equation}
	Note that
	\begin{align*}
		\begin{split}
			& -\sqrt{\mu s}(1 + \beta) \langle \nabla_x \mathcal{L}(z_{k+1}, \lambda_{k+1}), x_{k+1}-x_k \rangle \\
			\leq &~\frac{s}{3}(1 + \beta)\|\nabla_x \mathcal{L}(z_{k+1}, \lambda_{k+1})\|^2 + \frac{3\mu}{4}(1 + \beta)\|x_{k+1}-x_k\|^2
		\end{split}
	\end{align*}
	and
	\begin{align*}
		\begin{split}
			\|z_{k+1}-x^*\|^2 = &~ \|x_{k+1}-x^* + \beta(x_{k+1}-x_k)\|^2
			\geq \frac{1}{2}\|x_{k+1}-x^*\|^2 - \beta^2\|x_{k+1}-x_k\|^2.
		\end{split}
	\end{align*}
	Substituting the above inequalities into \eqref{h-h4}, we obtain
	\begin{equation*}
		\begin{split}
			& W_{k+1} - W_k + \frac{\sqrt{\mu s}}{2} W_{k+1} \\
			\leq &~ -\frac{s}{6}(1+4\beta)\|\nabla_x \mathcal{L}(z_{k+1}, \lambda_{k+1})\|^2 - \frac{3\sqrt{\mu}}{4}\left(\frac{1}{\sqrt{s}} - \beta\sqrt{\mu} - \beta^2 \mu \sqrt{s}\right)\|x_{k+1}-x_k\|^2 \\
			& - \frac{\mu}{2}\|z_{k+1}-z_k\|^2 - (1- \beta\sqrt{\mu s}) \langle A^\top (\lambda_{k+1} - \lambda^*), x_{k+1}-x_k \rangle \\
			& + \beta s \langle A^\top (\lambda_{k+1} - \lambda^*), \nabla_x \mathcal{L}(z_{k+1}, \lambda_{k+1}) \rangle - \sqrt{\mu s} \langle A^\top (\lambda_{k+1} - \lambda^*), x_{k+1}-x^* \rangle \\
			= &~ -\frac{s}{6}(1+4\beta)\|\nabla_x \mathcal{L}(z_{k+1}, \lambda_{k+1})\|^2 - \frac{3\sqrt{\mu}}{4}\Big(\frac{1}{\sqrt{s}} - \beta\sqrt{\mu} - \beta^2 \mu \sqrt{s}\Big)\|x_{k+1}-x_k\|^2 \\
			& - \frac{\mu}{2}\|z_{k+1}-z_k\|^2 - \sqrt{\mu s} \left\langle \lambda_{k+1} - \lambda^*, A z_{k+1} -b + \frac{1}{\sqrt{\mu s}} A(z_{k+1}-z_k) \right\rangle \\
			= &~ -\frac{s}{6}(1+4\beta)\|\nabla_x \mathcal{L}(z_{k+1}, \lambda_{k+1})\|^2 - \frac{3\sqrt{\mu}}{4}\Big(\frac{1}{\sqrt{s}} - \beta\sqrt{\mu} - \beta^2 \mu \sqrt{s}\Big)\|x_{k+1}-x_k\|^2 \\
			& - \frac{\mu}{2}\|z_{k+1}-z_k\|^2 -\frac{\sqrt{\mu s}}{\eta_k} \langle \lambda_{k+1} - \lambda^*, \lambda_{k+1} - \lambda_k \rangle,
		\end{split}
	\end{equation*}
	where the first equality holds because $-s\nabla_x\mathcal{L}(z_{k+1}, \lambda_{k+1})=(1+2\sqrt{\mu s})(x_{k+1}-x_k)-(x_k-x_{k-1})$ (see \eqref{@@}), and the last equality follows from Step 3 of Algorithm \ref{kuangjia}. The proof is complete. \qed
	
\end{proof}

\begin{theorem} \label{lisan}
	Let $\{(x_k,\lambda_k)\}_{k \geq 1}$ be the sequence generated by Algorithm \textup{\ref{kuangjia}}. Suppose that $0 \leq \beta \leq \frac{\sqrt{5}-1}{2\sqrt{\mu s}}$, $\eta_k \leq \frac{2+\sqrt{\mu s}}{2} \eta_{k-1}$ and $\lim \limits_{k \to +\infty} \eta_k = + \infty$. Then, for any $(x^*, \lambda^*)\in \Omega$, the sequence $\{(x_k, \lambda_k)\}_{k \geq 1}$ is bounded, and the following results are satisfied:
		\begin{align*}
			\left\{
			\begin{array}{ll}
				|f(x_k) - f(x^{*})| = \mathcal{O} \left(\frac{1}{\sqrt{\eta_{k-1}}}\right),~\text{as}~k \rightarrow +\infty,\\ [3mm]
				\|Ax_k-b\| = \mathcal{O} \left(\frac{1}{\sqrt{\eta_{k-1}}}\right),~\text{as}~k \rightarrow +\infty,\\ [3mm]
				\|x_k - x^{*}\| = \mathcal{O} \left(\frac{1}{\sqrt{\eta_{k-1}}}\right),~\text{as}~k \rightarrow +\infty,\\ [3mm]
				\|x_k - x_{k-1}\| = \mathcal{O} \left(\frac{1}{\sqrt{\eta_{k-1}}}\right),~\text{as}~k \rightarrow +\infty.
			\end{array}
			\right.
		\end{align*}
\end{theorem}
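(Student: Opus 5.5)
We mirror the proof of Theorem \ref{lianxu} at the discrete level, using Proposition \ref{prop2}. Define the discrete energy
\begin{align*}
E_k := \eta_{k-1} W_k + \frac{\sqrt{\mu s}}{2}\|\lambda_k-\lambda^*\|^2, \quad k\ge 1,
\end{align*}
with $W_k$ as in \eqref{hfunc2}. First we check the sign of the coefficients in \eqref{hinequ2}. The condition $0\le\beta\le\frac{\sqrt5-1}{2\sqrt{\mu s}}$ means that $u:=\beta\sqrt{\mu s}$ satisfies $u^2+u\le 1$. Hence $\frac{1}{\sqrt s}-\beta\sqrt\mu-\beta^2\mu\sqrt s=\frac{1}{\sqrt s}(1-u-u^2)\ge 0$, so every term on the right of \eqref{hinequ2} except the dual inner product is nonpositive.

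Next we multiply \eqref{hinequ2} by $\eta_k$. We use the identity $2\langle \lambda_{k+1}-\lambda^*,\lambda_{k+1}-\lambda_k\rangle = \|\lambda_{k+1}-\lambda^*\|^2-\|\lambda_k-\lambda^*\|^2+\|\lambda_{k+1}-\lambda_k\|^2$, which gives
\begin{align*}
\eta_k(1+\tfrac{\sqrt{\mu s}}{2})W_{k+1}-\eta_k W_k+\tfrac{\sqrt{\mu s}}{2}\big(\|\lambda_{k+1}-\lambda^*\|^2-\|\lambda_k-\lambda^*\|^2\big)\le 0 .
\end{align*}
The hypothesis $\eta_k\le \frac{2+\sqrt{\mu s}}{2}\eta_{k-1}$ gives $\eta_k W_k\le (1+\frac{\sqrt{\mu s}}{2})\eta_{k-1}W_k$. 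There is a subtlety about which index multiplies $W_{k+1}$. I expect it to be resolved by noting $\eta_k(1+\frac{\sqrt{\mu s}}{2})\ge\eta_k$, which yields $E_{k+1}\le \eta_k W_k+\frac{\sqrt{\mu s}}2\|\lambda_k-\lambda^*\|^2$. That route would need $\eta_k\le\eta_{k-1}$, so instead I would define $E_k:=(1+\frac{\sqrt{\mu s}}{2})\eta_{k-1}W_k+\frac{\sqrt{\mu s}}{2}\|\lambda_k-\lambda^*\|^2$. With this choice the displayed inequality together with the growth condition gives $E_{k+1}\le E_k$ directly. Since $W_k\ge 0$ by \eqref{listr}, we get $\eta_{k-1}W_k\le E_1$ and $\sup_k\|\lambda_k-\lambda^*\|<\infty$. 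From the three terms of $W_k$ we then read off:
\begin{itemize}
\item $\mathcal L(z_k,\lambda^*)-\mathcal L(x^*,\lambda^*)=\mathcal O(1/\eta_{k-1})$;
\item $\|x_k-x^*\|=\mathcal O(1/\sqrt{\eta_{k-1}})$;
\item $\|v_k\|=\mathcal O(1/\sqrt{\eta_{k-1}})$, hence $\|x_k-x_{k-1}\|\le\sqrt s(\|v_k\|+\sqrt\mu\|x_k-x^*\|)=\mathcal O(1/\sqrt{\eta_{k-1}})$.
\end{itemize}
Boundedness of $\{(x_k,\lambda_k)\}$ follows.

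The main obstacle is feasibility, for which we use Lemma \ref{yinli2} as the discrete analogue of Lemma \ref{yinli}. Set $c=\frac{1}{\sqrt{\mu s}}$. Summing Step 3 of Algorithm \ref{kuangjia} gives
\begin{align*}
\lambda_{k+1}-\lambda_1=\sum_{i=1}^k \eta_i\big((1+c)(Az_{i+1}-b)-c(Az_i-b)\big).
\end{align*}
Abel summation rewrites the right side as $(1+c)\eta_k(Az_{k+1}-b)+\sum_{i=2}^{k}\big((1+c)\eta_{i-1}-c\eta_i\big)(Az_i-b)-c\eta_1(Az_1-b)$. Put $r_i=\eta_{i-1}(Az_i-b)$ and $b_i=\frac{(1+c)\eta_{i-1}-c\eta_i}{(1+c)\eta_{i-1}}$. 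Dividing by $(1+c)$ then gives $\|r_{k+1}+\sum_{i=2}^k b_ir_i\|\le C$, where $C$ depends on $\sup_k\|\lambda_k-\lambda^*\|$ and the initial data; the index $i=1$ term is absorbed into $C$. The growth condition gives $c\eta_i\le c(1+\frac{\sqrt{\mu s}}2)\eta_{i-1}=(c+\frac12)\eta_{i-1}$, so $b_i\ge\frac{1}{2(1+c)}>0$, and $b_i<1$ because $\eta_i>0$. Lemma \ref{yinli2} then yields $\|Az_k-b\|=\mathcal O(1/\eta_{k-1})$. Consequently $\|Ax_k-b\|\le\|Az_k-b\|+\beta\|A\|\|x_k-x_{k-1}\|=\mathcal O(1/\sqrt{\eta_{k-1}})$.

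For the objective, write $|f(z_k)-f(x^*)|\le \mathcal L(z_k,\lambda^*)-\mathcal L(x^*,\lambda^*)+\|\lambda^*\|\|Az_k-b\|=\mathcal O(1/\eta_{k-1})$. Then use \eqref{L} in both directions between $x_k$ and $z_k$: the gradients are bounded because the iterates are bounded, and $\|z_k-x_k\|=\beta\|x_k-x_{k-1}\|$. This gives $|f(x_k)-f(z_k)|=\mathcal O(1/\sqrt{\eta_{k-1}})$, and combining the two bounds completes the argument.
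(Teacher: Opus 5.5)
Your proposal is correct and follows essentially the paper's proof: your final energy $(1+\frac{\sqrt{\mu s}}{2})\eta_{k-1}W_k+\frac{\sqrt{\mu s}}{2}\|\lambda_k-\lambda^*\|^2$ is exactly $\frac{2+\sqrt{\mu s}}{2}$ times the paper's $E_k=\eta_{k-1}W_k+\frac{\sqrt{\mu s}}{2+\sqrt{\mu s}}\|\lambda_k-\lambda^*\|^2$, and your Abel-summation argument with Lemma \ref{yinli2} uses the same weights $b_i=1-\frac{\eta_i}{(1+\sqrt{\mu s})\eta_{i-1}}$ as the paper. In a write-up, drop the abandoned first choice of $E_k$ and state the final energy directly.
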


\begin{proof}
	We first introduce the energy sequence, denoted as $E_k$, as follows:
	\begin{align*}
		E_k := \eta_{k-1} W_k + \frac{\sqrt{\mu s}}{2+\sqrt{\mu s}}\|\lambda_k-\lambda^{*}\|^2,
	\end{align*}
	where $W_k$ is defined as in \eqref{hfunc2}. From \eqref{hinequ2}, it is clear that
	\begin{align} \label{lisane}
		\begin{split}
			E_{k+1} - E_k = &~ \eta_k W_{k+1} - \eta_{k-1} W_k + \frac{\sqrt{\mu s}}{2+\sqrt{\mu s}}(\|\lambda_{k+1}-\lambda^{*}\|^2 - \|\lambda_k-\lambda^{*}\|^2) \\
			\leq &~ \left( \frac{2\eta_k}{2+\sqrt{\mu s}} - \eta_{k-1} \right) W_k - \frac{s(1+4\beta)\eta_k}{3(2+\sqrt{\mu s})}\|\nabla_x \mathcal{L}(z_{k+1}, \lambda_{k+1})\|^2 \\
			& - \frac{3\sqrt{\mu}\eta_k}{2(2+\sqrt{\mu s})}\left(\frac{1}{\sqrt{s}} - \beta\sqrt{\mu} - \beta^2 \mu \sqrt{s}\right)\|x_{k+1} - x_k\|^2 \\
			& - \frac{\mu\eta_k}{2+\sqrt{\mu s}} \|z_{k+1} - z_k\|^2 - \frac{2\sqrt{\mu s}}{2+\sqrt{\mu s}} \langle \lambda_{k+1} - \lambda_k, \lambda_{k+1} - \lambda^* \rangle \\
			& + \frac{\sqrt{\mu s}}{2+\sqrt{\mu s}}(\|\lambda_{k+1}-\lambda^{*}\|^2 - \|\lambda_k-\lambda^{*}\|^2)\\
			= &~ \left( \frac{2\eta_k}{2+\sqrt{\mu s}} - \eta_{k-1} \right) W_k - \frac{s(1+4\beta)\eta_k}{3(2+\sqrt{\mu s})}\|\nabla_x \mathcal{L}(z_{k+1}, \lambda_{k+1})\|^2 \\
			& - \frac{3\sqrt{\mu}\eta_k}{2(2+\sqrt{\mu s})}\left(\frac{1}{\sqrt{s}} - \beta\sqrt{\mu} - \beta^2 \mu \sqrt{s}\right)\|x_{k+1} - x_k\|^2 \\
			& - \frac{\mu\eta_k}{2+\sqrt{\mu s}} \|z_{k+1} - z_k\|^2 - \frac{\sqrt{\mu s}}{2+\sqrt{\mu s}}\|\lambda_{k+1}-\lambda_k\|^2.
		\end{split}
	\end{align}
	Since $0 \leq \beta \leq \frac{\sqrt{5}-1}{2\sqrt{\mu s}}$, we have $\frac{1}{\sqrt{s}} - \beta\sqrt{\mu} - \beta^2 \mu \sqrt{s} \geq 0$. This together with $\eta_k \leq \frac{2+\sqrt{\mu s}}{2} \eta_{k-1}$ and \eqref{lisane} yields $E_{k+1} - E_k \leq 0$. It means that $$E_k \leq E_1.$$ Clearly, from $\eta_{k-1}W_k \leq E_k \leq E_1$, it follows that the sequence $\{(x_k, \lambda_k)\}_{k\geq 1}$ is bounded and the following results are satisfied:
	\begin{align}
		& \mathcal{L}(z_k, \lambda^*)-\mathcal{L}(x^*, \lambda^*) = \mathcal{O} \left(\frac{1}{\eta_{k-1}}\right), ~\text{as}~ k \rightarrow +\infty, \label{l2} \\
		& \left\|\sqrt{\mu}(x_k-x^*) + \frac{1}{\sqrt{s}}(x_k - x_{k-1})\right\| =  \mathcal{O} \left(\frac{1}{\sqrt{\eta_{k-1}}}\right), ~\text{as}~ k \rightarrow +\infty, \label{xx2} \\
		& \|x_k -x^*\| = \mathcal{O} \left(\frac{1}{\sqrt{\eta_{k-1}}}\right), ~\text{as}~ k \rightarrow +\infty, \label{x2}
	\end{align}
	and
	\begin{align} \label{lam2}
		\|\lambda_k - \lambda^*\|^2 \leq \frac{2+\sqrt{\mu s}}{\sqrt{\mu s}}E_1.
	\end{align}
	Clearly, combining $\|x_k - x_{k-1}\| \leq \|\sqrt{\mu s}(x_k-x^*) + x_k - x_{k-1}\| + \sqrt{\mu s}\|x_k-x^*\| $ with \eqref{xx2} and \eqref{x2}, we deduce that
	\begin{align}\label{xd2}
		\begin{split}
			\|x_k - x_{k-1}\| = \mathcal{O} \left(\frac{1}{\sqrt{\eta_{k-1}}}\right),~\text{as}~ k \rightarrow +\infty.
		\end{split}
	\end{align}
	From \eqref{lam2}, we obtain $$\|\lambda_{k+1}-\lambda_1\|\leq\|\lambda_{k+1}-\lambda^*\|+\|\lambda_1-\lambda^*\|\leq 2\sqrt{\frac{2+\sqrt{\mu s}}{\sqrt{\mu s}}E_1}.$$ Furthermore, it follows from Step 3 of Algorithm \ref{kuangjia} that
	\begin{align}
		\begin{split} \label{lamda2}
			\sqrt{\mu s} (\lambda_{k+1} - \lambda_1) = &~ \sqrt{\mu s} \sum_{i=1}^k(\lambda_{i+1} - \lambda_i) \\
			= &~ \sqrt{\mu s} \sum_{i=1}^k \eta_i(Az_{i+1}-b) + \sum_{i=1}^k \eta_i A(z_{i+1} - z_i) \\
			= &~ \sum_{i=1}^k \eta_i \big[ (1+\sqrt{\mu s})(Az_{i+1}-b) - (Az_i-b) \big] \\
			= &~ (1+\sqrt{\mu s}) \big[\eta_k(Az_{k+1}-b) - \eta_0(Az_1-b) \big] \\
			&~ + \sum_{i=1}^k \big( \eta_{i-1}(1+\sqrt{\mu s}) - \eta_i \big)(Az_i-b).
		\end{split}
	\end{align}
	Let $r_k := \eta_{k-1}(Az_k-b)$, $b_k = \dfrac{\eta_{k-1} - \frac{\eta_k}{1+\sqrt{\mu s}}}{\eta_{k-1}} \in [0,1)$ and $C = 2\frac{\sqrt{\mu s}}{1+ \sqrt{\mu s}}\sqrt{\frac{2+\sqrt{\mu s}}{\sqrt{\mu s}}E_1} + \eta_0\|Az_1-b\|$. Together with \eqref{lamda2} and Lemma \ref{yinli2}, we have $\eta_{k-1} \|Az_k-b\| < \eta_0 \|Az_1 -b\| + 2C$, which means that
	\begin{align}\label{ad2}
		\|Az_k-b\| = \mathcal{O} \left(\frac{1}{\eta_{k-1}}\right),~\text{as}~ k \rightarrow +\infty.
	\end{align}
	This together with $\|Ax_k-b\| \leq \|Az_k - b\| + \beta \| A \|\|x_k - x_{k-1}\|$ and \eqref{xd2} yields
	\begin{align*}
		\|Ax_k - b\| = \mathcal{O} \left(\frac{1}{\sqrt{\eta_{k-1}}}\right),~\text{as}~ k \rightarrow +\infty.
	\end{align*}
	Note that
	\begin{equation*}
		\begin{split}
			|f(z_k) - f(x^{*})|
			\leq &~ \mathcal{L}(z_k, \lambda^*)-\mathcal{L}(x^*, \lambda^*) +  |\langle \lambda^{*}, Az_k-b \rangle|\\
			\leq &~ \mathcal{L}(z_k, \lambda^*)-\mathcal{L}(x^*, \lambda^*) + \|\lambda^{*}\| \|Az_k-b\|.
		\end{split}
	\end{equation*}
	Thus, it follows from \eqref{l2} and \eqref{ad2} that
	\begin{align}\label{ff2}
		|f(z_k) - f(x^{*})| = \mathcal{O} \left(\frac{1}{\eta_{k-1}}\right),~\text{as}~ k \rightarrow +\infty.
	\end{align}
	Taking \eqref{L} into account, it is easy to show that
	\begin{align*}
		f(x_k) - f(z_k) \leq &~ \langle \nabla f(z_k), -\beta(x_k - x_{k-1}) \rangle + \frac{L\beta^2}{2}\|x_k - x_{k-1}\|^2 \\
		\leq &~ \beta \|\nabla f(z_k)\| \|x_k - x_{k-1}\| + \frac{L\beta^2}{2}\|x_k - x_{k-1}\|^2
	\end{align*}
	and
	\begin{align*}
		f(z_k) - f(x_k) \leq &~ \langle \nabla f(x_k), \beta(x_k - x_{k-1}) \rangle + \frac{L\beta^2}{2}\|x_k - x_{k-1}\|^2 \\
		\leq &~ \beta \|\nabla f(x_k)\| \|x_k - x_{k-1}\| + \frac{L\beta^2}{2}\|x_k - x_{k-1}\|^2.
	\end{align*}
	Combining the Lipschitz continuity of $\nabla f$, \eqref{xd2} and the boundedness of $\{x_k\}_{k\geq 1}$, we obtain
	\begin{align*}
		|f(x_k) - f(z_k)|  = \mathcal{O} \left(\frac{1}{\sqrt{\eta_{k-1}}}\right),~\text{as}~ k \rightarrow +\infty.
	\end{align*}
	 This together with \eqref{ff2} and $|f(x_k)-f(x^*)| \leq |f(x_k) - f(z_k)| + |f(z_k) - f(x^*)|$ implies
	\begin{align*}
	 |f(x_k)-f(x^*)| = \mathcal{O} \left(\frac{1}{\sqrt{\eta_{k-1}}}\right),~\text{as}~ k \rightarrow +\infty.
	\end{align*}
	 The proof is complete. \qed
\end{proof}

\begin{remark}
	\begin{enumerate}
		\item[{\rm (i)}] \textup{From Theorems \ref{lianxu} and \ref{lisan}, it is clear that the convergence rate of Algorithm \ref{kuangjia} matches that of the continuous-time dynamical system \eqref{system}.}
		\item[{\rm (ii)}] \textup{In Theorem \ref{lisan}, the condition $\eta_k\leq  \frac{2+\sqrt{\mu s}}{2} \eta_{k-1}$ can be rewritten as
		\begin{align*}
			\frac{\eta_k-\eta_{k-1}}{\sqrt{s}}\leq \frac{\sqrt{\mu}}{2+\sqrt{\mu s}}\eta_k\leq \frac{\sqrt{\mu}}{2}\eta_k.
		\end{align*}
		This can be viewed as a discretized version of $\dot{ \eta}(t)\leq \frac{\sqrt{\mu}}{2}\eta(t)$,  which is consistent with the condition in Theorem \ref{lianxu}.}
	\end{enumerate}
\end{remark}

In the specific case where $\eta_k = \frac{2+\sqrt{\mu s}}{2}\eta_{k-1}$, we obtain $\eta_k = \left(\frac{2+\sqrt{\mu s}}{2}\right)^{k}\eta_0$. Then, by applying Theorem \ref{lisan}, we establish the following results.

\begin{corollary}\label{coro2}
	Let $\{(x_k,\lambda_k)\}_{k \geq 1}$ be the sequence generated by Algorithm \textup{\ref{kuangjia}}. Suppose that $0 \leq \beta \leq \frac{\sqrt{5}-1}{2\sqrt{\mu s}}$ and $\eta_k = \left(\frac{2+\sqrt{\mu s}}{2}\right)^{k} \eta_0$. Then, for any $(x^*, \lambda^*)\in \Omega$, the sequence $\{(x_k,\lambda_k)\}_{k \geq 1}$ is bounded, and the following results are satisfied:
	\begin{align*}
		\left\{
		\begin{array}{lll}
			|f(x_k) - f(x^{*})| = \mathcal{O} \Big(\frac{2} {2+\sqrt{\mu s} }\Big)^{\frac{k-1}{2}}, ~\text{as}~k \rightarrow +\infty, \\ [3mm]
			\|Ax_k-b\| = \mathcal{O} \Big(\frac{2} {2+\sqrt{\mu s} }\Big)^{\frac{k-1}{2}},~\text{as}~k \rightarrow +\infty,\\ [3mm]
			\|x_k- x^{*}\| = \mathcal{O} \Big(\frac{2} {2+\sqrt{\mu s} }\Big)^{\frac{k-1}{2}},~\text{as}~k \rightarrow +\infty,\\ [3mm]
			\|x_k- x_{k-1}\| = \mathcal{O} \Big(\frac{2} {2+\sqrt{\mu s} }\Big)^{\frac{k-1}{2}},~\text{as}~k \rightarrow +\infty.
		\end{array}
		\right.
	\end{align*}
\end{corollary}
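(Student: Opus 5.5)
This corollary follows directly from Theorem \ref{lisan}; the only work is to check its hypotheses for the specific choice $\eta_k = \left(\frac{2+\sqrt{\mu s}}{2}\right)^{k}\eta_0$ and then substitute this $\eta_k$ into the rates.

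\textbf{Checking the hypotheses.} The bound $0 \leq \beta \leq \frac{\sqrt{5}-1}{2\sqrt{\mu s}}$ is assumed verbatim in the corollary. The recursion $\eta_k = \frac{2+\sqrt{\mu s}}{2}\eta_{k-1}$ holds with equality, so the inequality $\eta_k \leq \frac{2+\sqrt{\mu s}}{2}\eta_{k-1}$ is satisfied. Since $\mu, s > 0$, the ratio $q := \frac{2+\sqrt{\mu s}}{2}$ is strictly larger than $1$. With $\eta_0 = 1 > 0$ (the initialization in Algorithm \ref{kuangjia}), this gives $\eta_k = q^k \eta_0 \to +\infty$. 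Hence every assumption of Theorem \ref{lisan} holds. For any $(x^*,\lambda^*)\in\Omega$, the theorem then yields boundedness of $\{(x_k,\lambda_k)\}_{k\geq1}$, together with the four estimates at rate $\mathcal{O}(1/\sqrt{\eta_{k-1}})$.

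\textbf{Substituting the rate.} We have $\eta_{k-1} = q^{k-1}\eta_0$, so
\begin{align*}
\frac{1}{\sqrt{\eta_{k-1}}} = \frac{1}{\sqrt{\eta_0}}\left(\frac{2}{2+\sqrt{\mu s}}\right)^{\frac{k-1}{2}}.
\end{align*}
The constant $\eta_0^{-1/2}$ is absorbed into the $\mathcal{O}$, which gives the stated geometric rates for $|f(x_k)-f(x^*)|$, $\|Ax_k-b\|$, $\|x_k-x^*\|$ and $\|x_k-x_{k-1}\|$.

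There is no genuine obstacle here. The only point to state explicitly is that $q>1$, which ensures $\eta_k\to+\infty$ as Theorem \ref{lisan} requires. The underlying energy $E_k$ is nonincreasing, and it is Theorem \ref{lisan} that converts this into the geometric decay.
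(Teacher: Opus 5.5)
Your proposal is correct and matches the paper, which obtains the corollary by applying Theorem \ref{lisan} to $\eta_k = \left(\frac{2+\sqrt{\mu s}}{2}\right)^{k}\eta_0$ and substituting $\eta_{k-1} = \left(\frac{2+\sqrt{\mu s}}{2}\right)^{k-1}\eta_0$ into the $\mathcal{O}(1/\sqrt{\eta_{k-1}})$ rates. Your explicit check that the ratio exceeds $1$, so that $\eta_k \to +\infty$, is a hypothesis the paper leaves implicit.
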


\section{Numerical Experiments}

	In this section, we design numerical experiments to verify the theoretical results underlying Algorithm \ref{kuangjia}. All numerical experiments are conducted using MATLAB R2020b on a Windows 10 PC equipped with an Intel Core i5-1135G7 processor (2.40 GHz, 1.38 GHz).
	
	\begin{example}\cite[Example 4.2]{he2026} Consider the Distributed Logistic regression with $\ell_2$ regularization:
	\begin{align} \label{exp1}
		\left\{ \begin{array}{cc}
			\min_{x \in \mathbb{R}^{pm}} & f(x) = \sum_{i=1}^{p} \Big[ \ln\big(1 + e^{-c_i^\top x_i}\big) + \frac{\epsilon_i}{2} \|x_i\|^2 \Big] \\
			s.t.& x_i = x_j, ~~\forall (i, j) \in \mathcal{E}.
		\end{array}
		\right.
	\end{align}
	Here, $\mathcal{G}(\mathcal{V}, \mathcal{E}, H)$ denotes an undirected network with $p$ nodes, in which $\mathcal{V}$, $\mathcal{E}$ and $H$ stand for the vertex set, edge set and doubly stochastic weight matrix, respectively; $x = [x_1^\top, x_2^\top, \ldots, x_p^\top]^\top$, $x_i \in \mathbb{R}^m$, $c_i \in \mathbb{R}^m$ and $\epsilon_i > 0$, $i=1,\ldots,p$. Note that Problem \eqref{exp1} has broad applications in many fields, such as machine learning and statistical learning \cite{exp, ring}. For the ring-topology undirected connected network with $p$ agents, the consensus constraints $x_1 = \cdots = x_p$ are equivalent to the linear equality constraint $L_{p\circ m} x = \mathbf{0}_{pm}$, with $L_{p\circ m} = (I_p - H) \otimes I_m$. Consequently, the original problem \eqref{exp1} can be reformulated as:
	\begin{align}
		\left\{ \begin{array}{cc}
			\min_{x \in \mathbb{R}^{pm}}& f(x) = \sum_{i=1}^{p} f_i (x_i)  \\
			s.t.& L_{p\circ m} x = \mathbf{0}_{pm},
		\end{array}
		\right.
	\end{align}
	where $f_i (x_i) = \log\Big(1 + e^{-c_i^\top x_i}\Big) + \frac{\epsilon_i}{2} \|x_i\|^2$. Let each component of $c_i$ be independently and identically drawn from a uniform distribution on $[0,1]$, and let the scalars $\epsilon_i$ be independently sampled from a uniform distribution on $[4,6]$. Under these settings, $f$ is $\mu$-strongly convex with an $L$-Lipschitz continuous gradient, where $\mu = \min \{ \epsilon_i|i=1,\ldots,p\}$ and $L = \max \Big\{ \frac{\|c_i\|^2}{4} + \epsilon_i | i=1,\ldots,p \Big\}$.
	
    It should be noted that, in practice, it is difficult to exactly solve the subproblem (Step 2 of Algorithm \ref{kuangjia}). Although the theoretical convergence of Algorithm \ref{kuangjia} relies on the assumption of exact subproblem solving, a sufficiently accurate approximate solution can be treated as a small perturbation of the exact one. As similarly demonstrated in \cite{ZH, he2026, hex2025}, if the error is properly controlled, such a mild approximation error does not affect the convergence performance. In this case, the subproblem occurring in Algorithm \ref{kuangjia} is solved approximately with a sufficiently small stopping tolerance, and the stopping condition is
    \begin{equation*}
        \frac{\|x_{k} - x_{k-1}\|}{\max\{\|x_{k-1}\|,1\})} \le 10^{-6}
    \end{equation*}
    or the maximum inner iterations reach $150$.

    In the first numerical experiment, we choose three different values of the parameter of $\eta_k$ for Algorithm \ref{kuangjia} (denoted by AL1). Then, we compare the performance of Algorithm \ref{kuangjia}, the accelerated linearized augmented Lagrangian method (ALALM-Xu) (\cite{xu}, Algorithm 1) and the accelerated linearized primal-dual method (ALPDM) (\cite{he2022}, Algorithm 2). The maximum number of iterations is set to 800, and the parameter settings for these algorithms are as follows:
	
	\begin{itemize}
		\item ALALM-Xu: $\alpha_k=\frac{2}{k+2}$, $\eta_0 = 1$, $\eta_k = \gamma_k = 50k$, and $P_k = \frac{2L}{k}I_{mp}$.
		\item ALPDM: $s=2m$, $\alpha= 10$, and $M = sL I_{mp}$.
		\item AL1: $s=\frac{1}{L}$, $\eta_0 = 1$, $\beta = \frac{1}{3\sqrt{\mu s}}$. For $\eta_k$, we consider the following three cases:
	\begin{align*}
		\text{Case 1}:&~ \eta_k = \min \left\{k^2,~ \frac{2+\sqrt{\mu s}}{2} \eta_{k-1} \right\};\\
		\text{Case 2}:&~ \eta_k = \min \left\{k^3,~ \frac{2+\sqrt{\mu s}}{2} \eta_{k-1} \right\};\\
		\text{Case 3}:&~ \eta_k = \Big(\frac{2+\sqrt{\mu s}}{2}\Big)^{k} \eta_0.
	\end{align*}
	\end{itemize}
	The experimental results are shown in Figures \ref{fig1} and \ref{fig2}.
	\begin{figure}[H]
		\centering
		\begin{minipage}{0.45\textwidth}
			\centering
			\includegraphics[width=\linewidth]{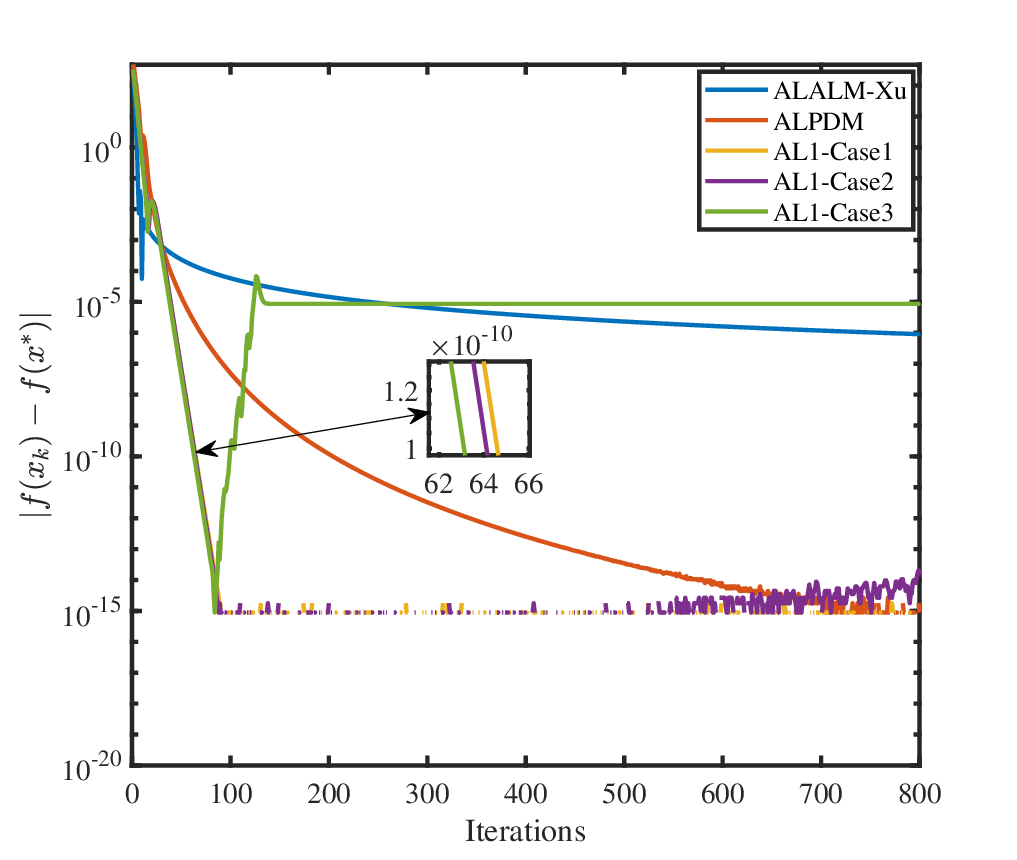}
		\end{minipage}
		\hfill
		\begin{minipage}{0.45\textwidth}
			\centering
			\includegraphics[width=\linewidth]{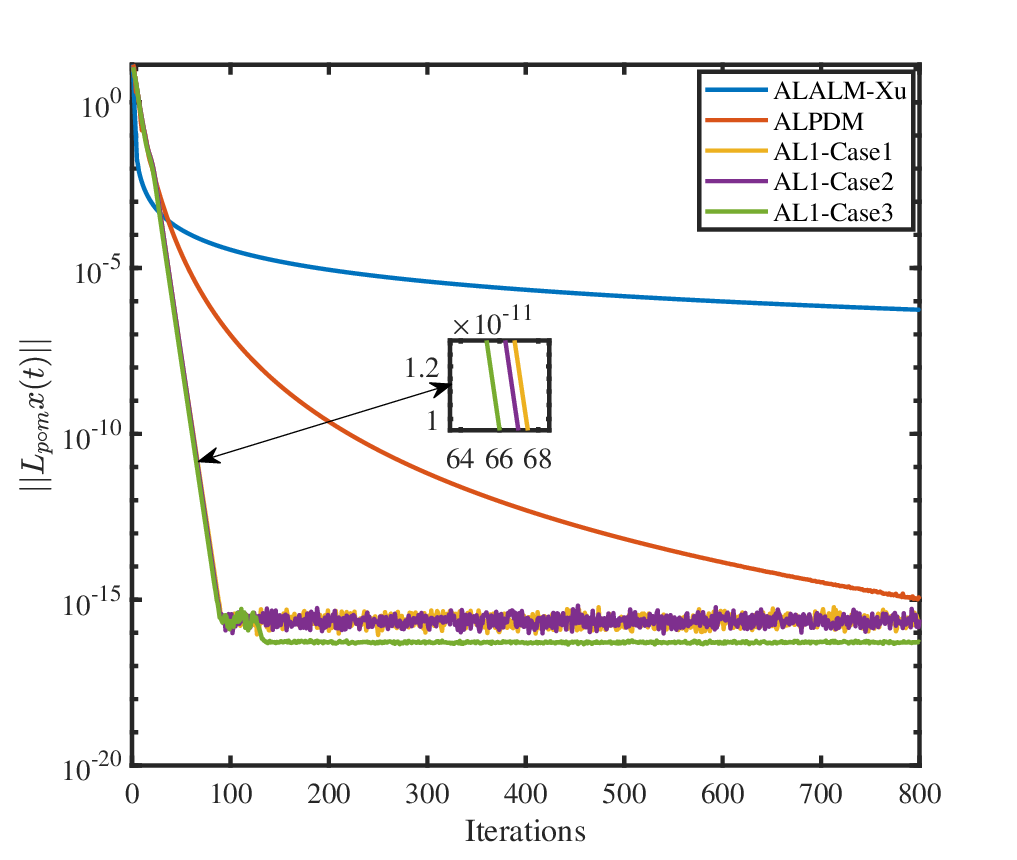}
		\end{minipage}
		\caption{Objective residual and feasibility violation of different algorithms for the case with parameters $(p,m)=(10, 30)$.}
		\label{fig1}
	\end{figure}
	
	\begin{figure}[H]
		\centering
		\begin{minipage}{0.45\textwidth}
			\centering
			\includegraphics[width=\linewidth]{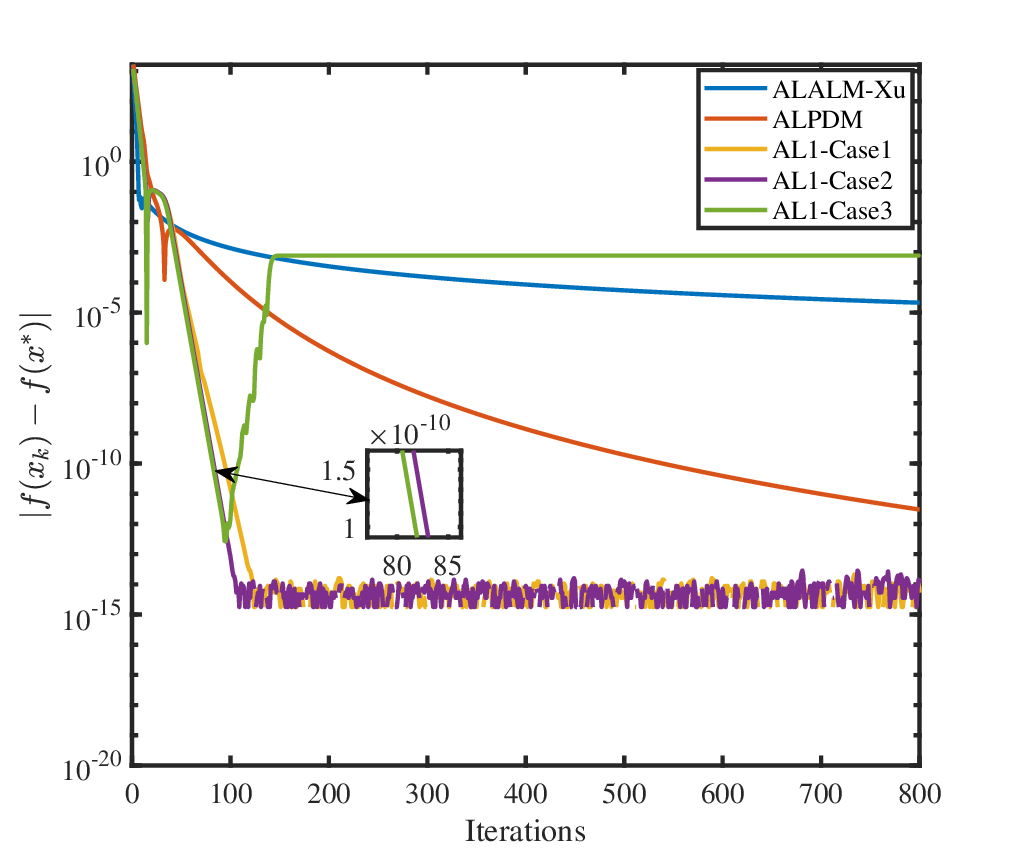}
		\end{minipage}
		\hfill
		\begin{minipage}{0.45\textwidth}
			\centering
			\includegraphics[width=\linewidth]{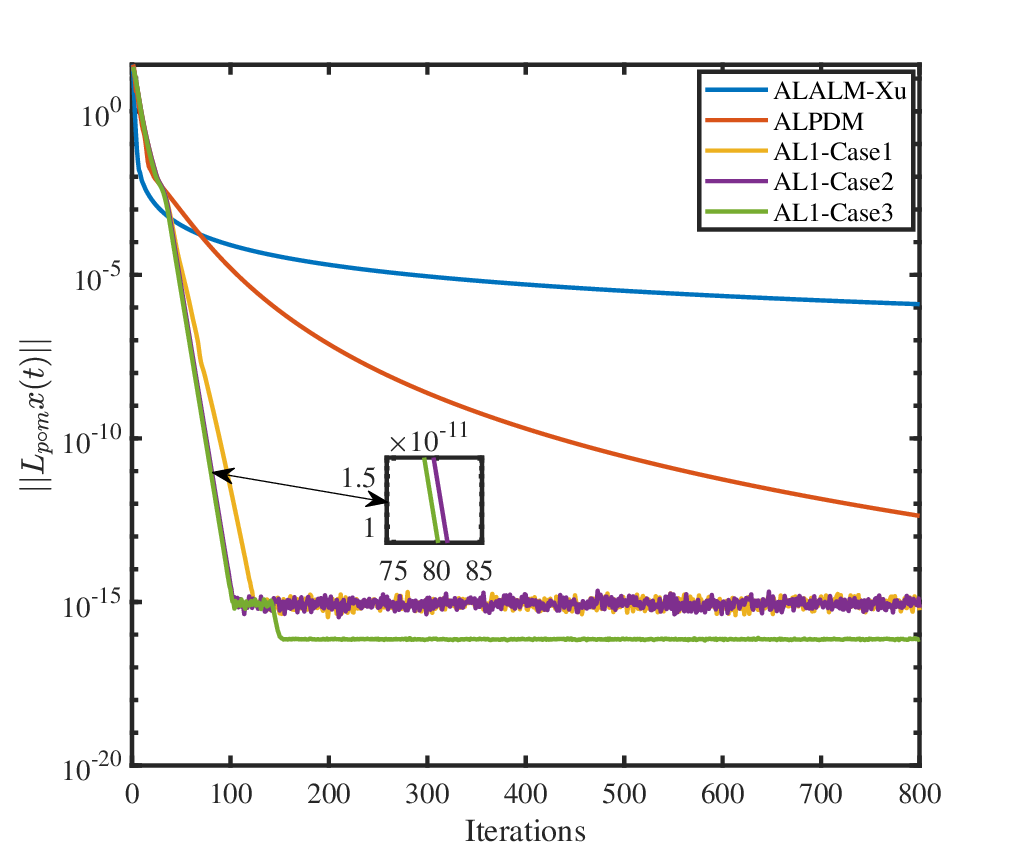}
		\end{minipage}
		\caption{Objective residual and feasibility violation of different algorithms for the case with parameters $(p,m)=(20, 50)$.}
		\label{fig2}
	\end{figure}
	As shown in Figures \ref{fig1} and \ref{fig2}, our algorithm, AL1, exhibits superior convergence performance. Furthermore, comparing the algorithms under different values of $\eta_k$, it is found that AL1 performs best overall under Case 2. Theoretically, the fastest convergence rate should be achieved when $\eta_k$ follows Case 3. However, a fast growing $\eta_k$ makes subproblems difficult to solve. Therefore, the resulting inexact subproblem solutions degrade the algorithm's actual performance. This observation motivates us to select a suitable growth rate for $\eta_k$ in practical applications.
	
	In the following numerical experiments, we test different values of the parameter of $\beta$ for AL1 and compare its performance with ALALM-Xu and ALPDM. For AL1, we choose $\eta_k = \min \Big\{k^3, \frac{2+\sqrt{\mu s}}{2}\eta_{k-1} \Big\}$, and set $\beta = \kappa \beta_{\max}$, where $\kappa \in \{0.3, 0.5, 1\}$ and $\beta_{\max}= \frac{1}{2\sqrt{\mu s}}$. The results are shown in Figures \ref{fig3} and \ref{fig4}.
	\begin{figure}[H]
		\centering
		\begin{minipage}{0.45\textwidth}
			\centering
			\includegraphics[width=\linewidth]{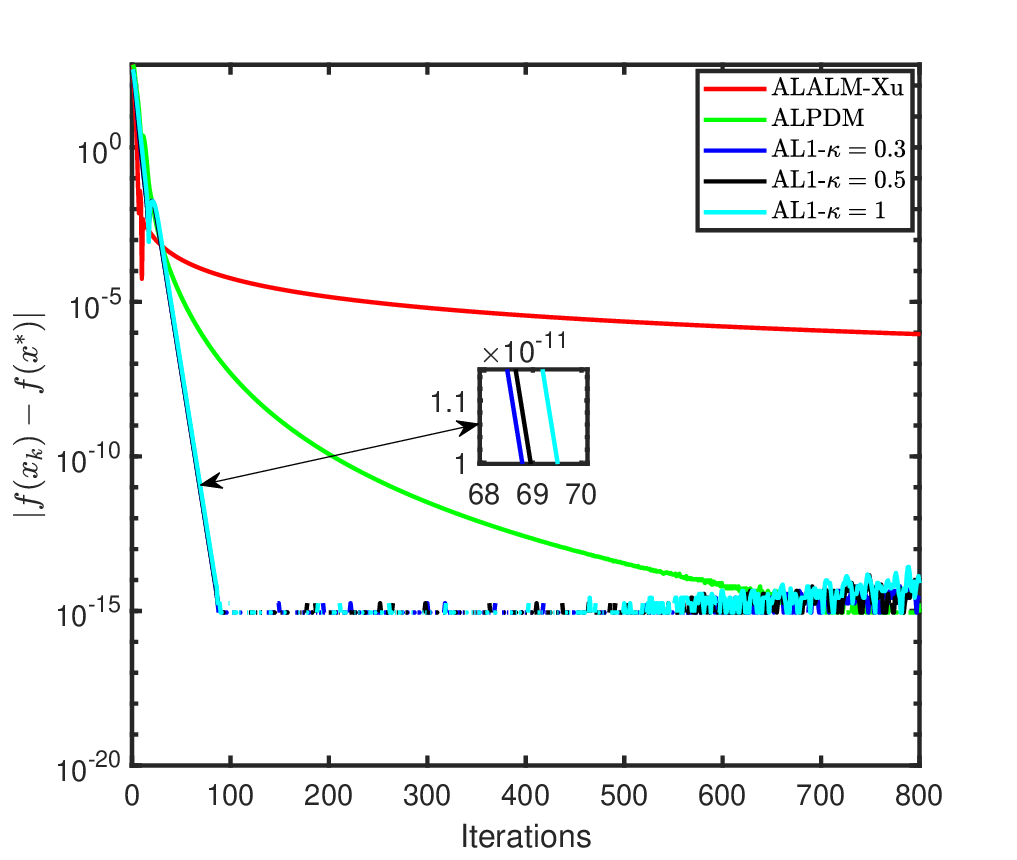}
		\end{minipage}
		\hfill
		\begin{minipage}{0.45\textwidth}
			\centering
			\includegraphics[width=\linewidth]{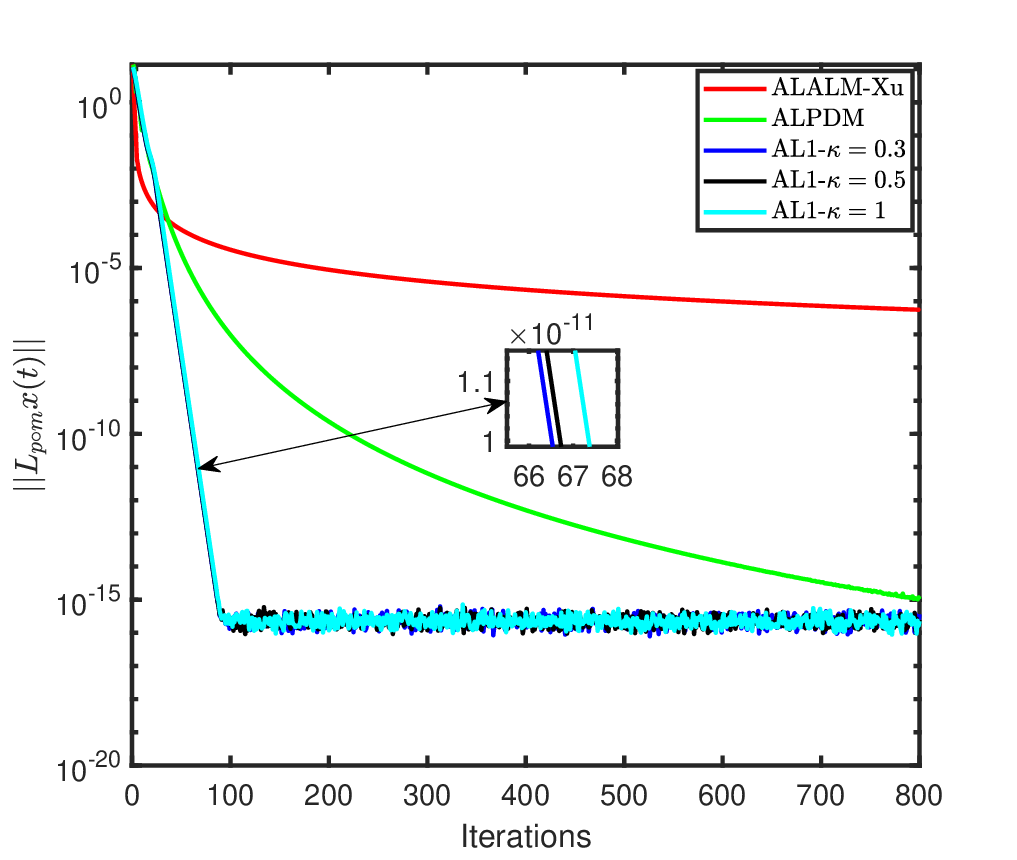}
		\end{minipage}
		\caption{Objective residual and feasibility violation under different $\beta$ for the case with parameters $(p,m)=(10, 30)$.}
		\label{fig3}
	\end{figure}
	
	\begin{figure}[H]
		\centering
		\begin{minipage}{0.45\textwidth}
			\centering
			\includegraphics[width=\linewidth]{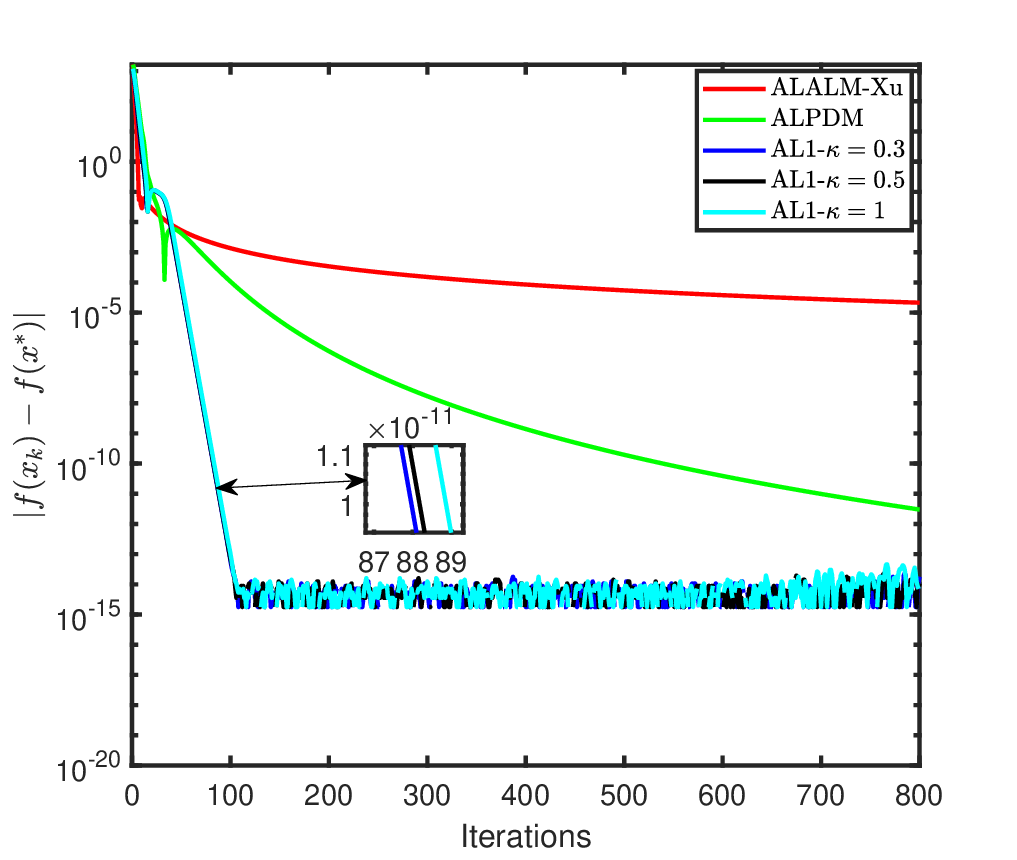}
		\end{minipage}
		\hfill
		\begin{minipage}{0.45\textwidth}
			\centering
			\includegraphics[width=\linewidth]{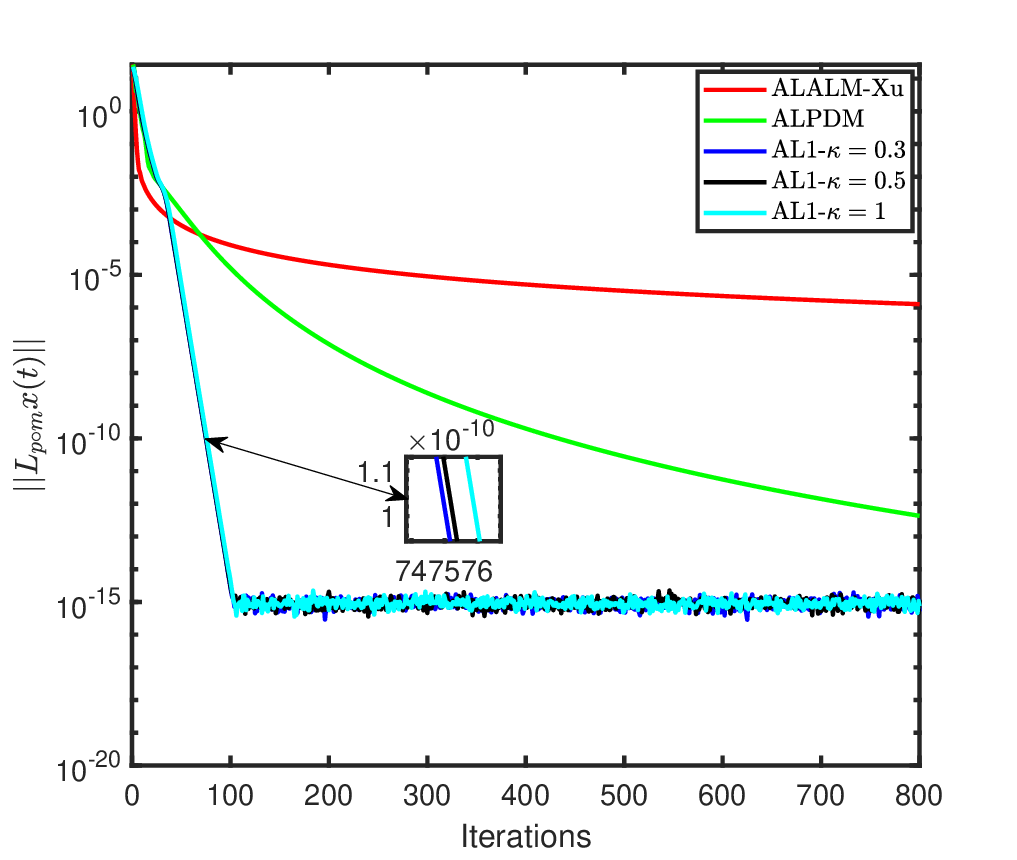}
		\end{minipage}
		\caption{Objective residual and feasibility violation under different $\beta$ for the case with parameters $(p,m)=(20, 50)$.}
		\label{fig4}
	\end{figure}
	As can be observed from Figures \ref{fig3} and \ref{fig4}, the convergence performance of AL1 varies only slightly across different values of $\beta$. In general, the smaller the value of $\beta$, the faster the convergence rate, while the iteration accuracy remains approximately the same.
	
	\end{example}
	
	\begin{example}
		Consider the following regularized least squares problem:
		\begin{equation}
			\min_{x\in\mathbb{R}^n} f(x)=\frac12\|Ax-b\|^2+\frac{\mu}{2}\|x\|^2,
		\end{equation}
		where $A \in \mathbb{R}^{m \times n}$ and $b \in \mathbb{R}^m$. The matrix $A\in \mathbb{R}^{m \times n}$ is randomly generated with sparsity density $\sigma \in (0, 1]$. Its nonzero entries are independently sampled from the uniform distribution over $[-0.1,0.1]$. The vector $b\in \mathbb{R}^m$ is drawn from the standard Gaussian distribution. Under these settings, $f$ is $\mu$-strongly convex with an $L$-Lipschitz continuous gradient, where $\mu>0$ denotes the prescribed Tikhonov regularization parameter and $L = \|A\|^2 + \mu$.
		
		In the following experiments, we compare AL1 with IAPDA proposed in \cite{ZH}, FISTA proposed in \cite{FIS}, the accelerated forward-backward method (AFBM) proposed in \cite{AF} and IPAHD-SC proposed in \cite{firstorder}. Set $\mu=0.01$. The parameter settings for these algorithms are listed as follows:
		\begin{itemize}
			\item IAPDA: $\beta_0 = 1$.
			\item FISTA: $s=\frac{1}{L}$.
			\item AFBM: $s=\frac{1}{L}$ and $\alpha =5$.
			\item AL1: $\sqrt{s} = \frac{1}{3\sqrt{\mu}}$ and $\beta =\frac{1}{6\sqrt{\mu s}}$.
			\item IPAHD-SC: $\sqrt{s} = \beta = \frac{1}{3\sqrt{\mu}}$.
		\end{itemize}
		For the sparsity density parameter $\sigma$, we choose $\sigma = 0.1$ and $\sigma = 0.5$. Furthermore, two different dimension configurations are considered: $(m,n)=(500,1000)$ and $(m,n)=(800,1500)$. The maximum number of iterations is set to $1000$ and $1500$ for the two dimension configurations, respectively. The results are shown in Figures \ref{fig5} and \ref{fig6}.
		\begin{figure}[H]
			     \centering
			     \begin{minipage}{0.45\textwidth}
				    \centering
				    \includegraphics[width=\linewidth]{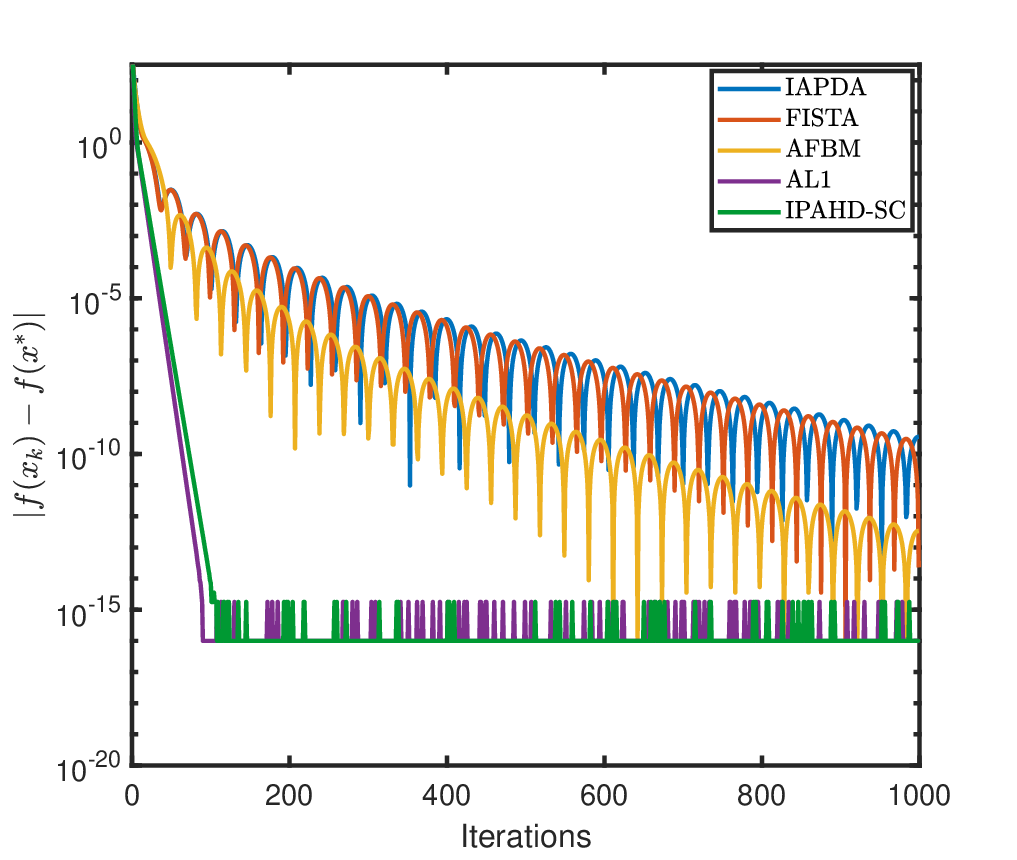}
				    $(a) ~\sigma=0.1$
			     \end{minipage}
			     \hfill
			     \begin{minipage}{0.45\textwidth}
				    \centering
				    \includegraphics[width=\linewidth]{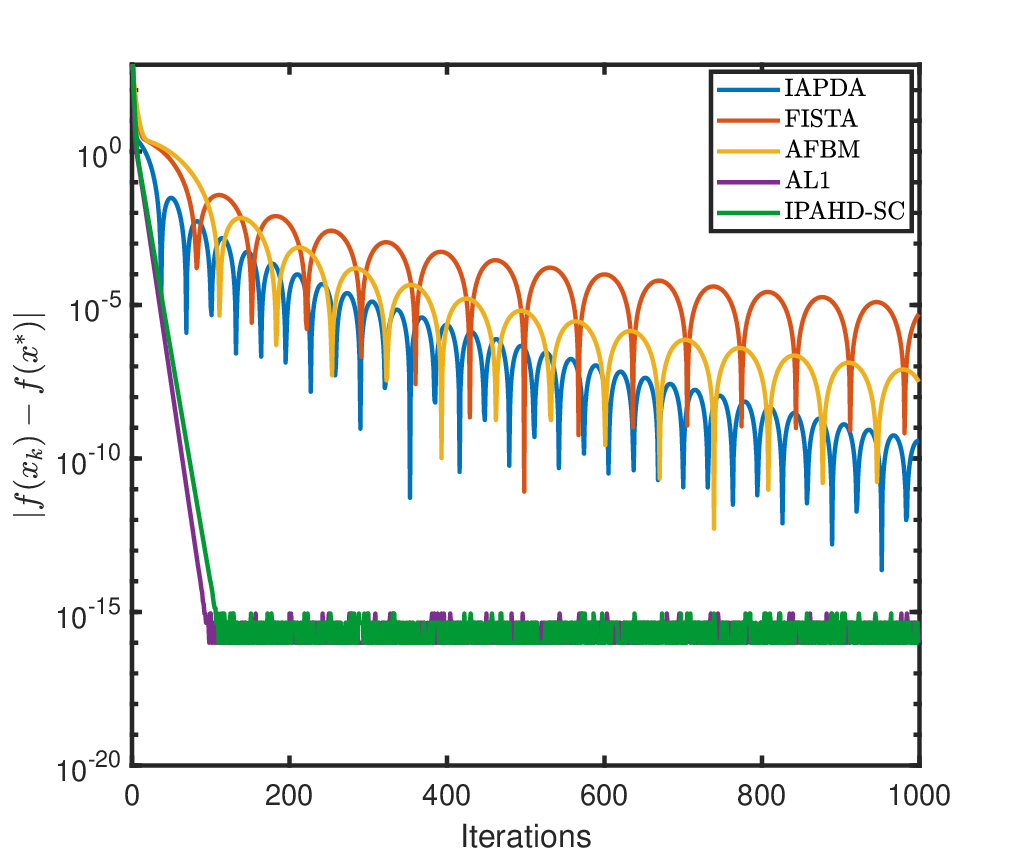}
				    $(b) ~\sigma=0.5$
			     \end{minipage}
			     \caption{Objective residual of different algorithms for the case with parameters $(m,n)=(500, 1000)$.}
			     \label{fig5}
		  \end{figure}
		
		\begin{figure}[H]
			\centering
			\begin{minipage}{0.45\textwidth}
				\centering
				\includegraphics[width=\linewidth]{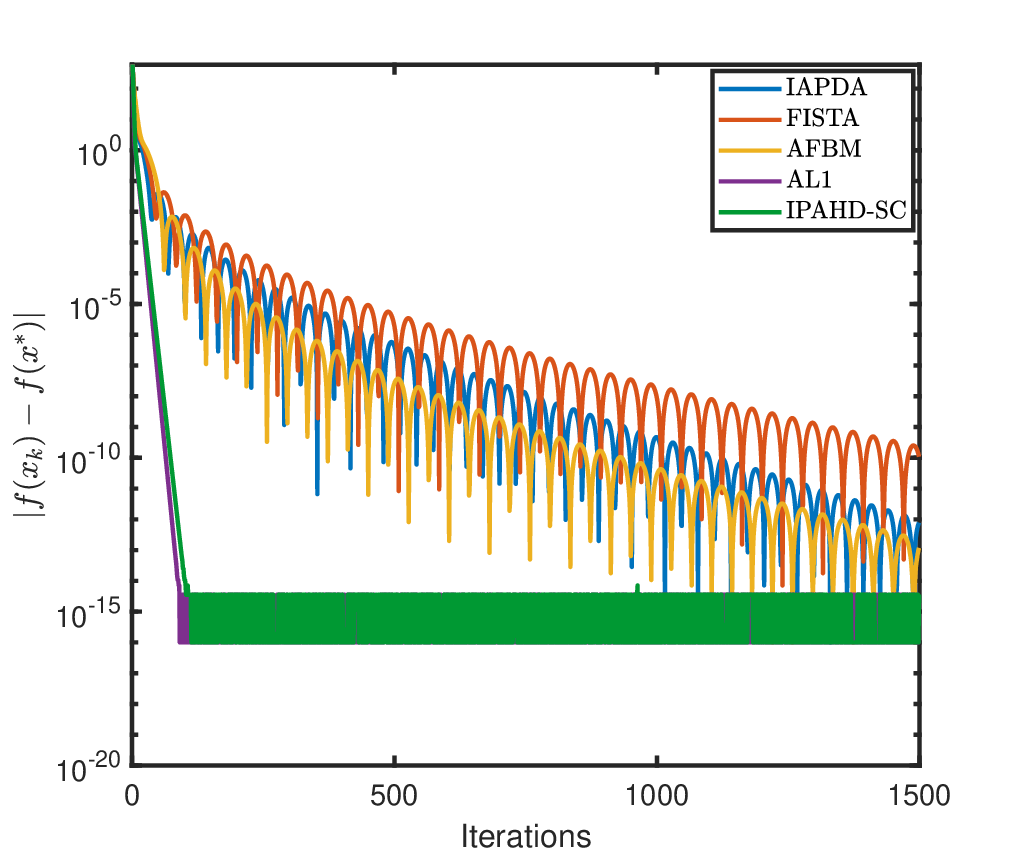}
				$(a) ~\sigma=0.1$
			\end{minipage}
			\hfill
			\begin{minipage}{0.45\textwidth}
				\centering
				\includegraphics[width=\linewidth]{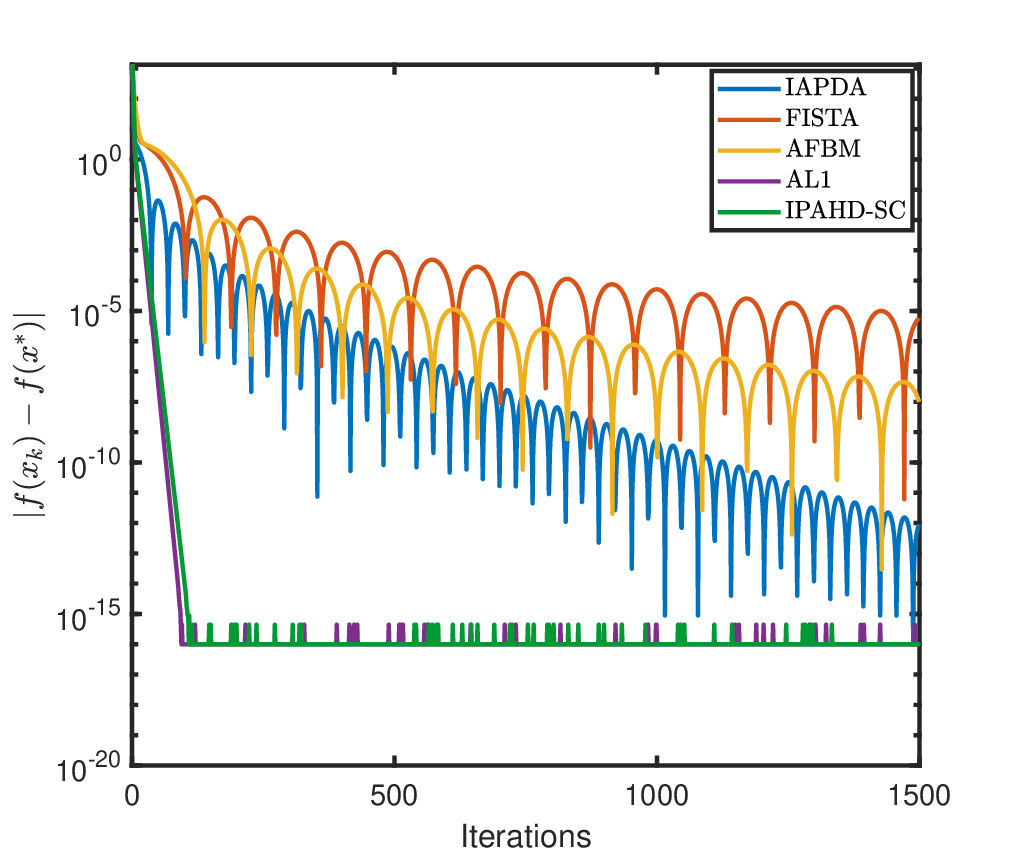}
				$(b) ~\sigma=0.5$
			\end{minipage}
			\caption{Objective residual of different algorithms for the case with parameters $(m,n)=(800, 1500)$.}
			\label{fig6}
		\end{figure}
		From Figures \ref{fig5} and \ref{fig6}, we observe that AL1 and IPAHD-SC achieve higher accuracy, faster convergence rates and less oscillations. Meanwhile, the convergence performances of FISTA and AFBM are affected by $\sigma$.
	\end{example}

\section{Conclusions}
	In this paper, we consider a linear equality constrained convex optimization problem with a strongly convex objective function. We first propose an inertial primal-dual dynamical system \eqref{system} that incorporates an implicit Hessian-driven damping term, and then establish its asymptotic properties. We show that the convergence rates of System \eqref{system} can achieve exponential rates in the best case. By applying a natural implicit time discretization technique, we obtain an inertial accelerated primal-dual algorithm for Problem \eqref{prob}. Using Lyapunov-based techniques, we prove that the proposed algorithm attains fast convergence rates that match those of its continuous-time counterpart.
	
	For future work, it would be of interest to develop an inertial primal-dual dynamical system with asymptotic vanishing damping and implicit Hessian-driven damping for solving ``smooth + nonsmooth" composite convex optimization problems with linear equality constraints. Moreover, whether such a dynamical system can be extended to solve non-convex optimization problems with linear equality constraints is also a highly meaningful research direction.
	
\section*{Appendix}

	\begin{theorem*}[Restatement of Theorem \ref{unique}]
		Suppose that $\nabla f$ is $L$-Lipschitz continuous on $\mathbb{R}^n$ with $L>0$. Let $\eta (t)\in {L}_{loc}^{1}([ t_0,+\infty))$. Then, for any initial conditions $(x(t_0),\dot{x}(t_0), \lambda(t_0)) \in \mathbb{R}^n \times \mathbb{R}^n \times \mathbb{R}^m$, System \eqref{system} admits a unique global strong solution.
	\end{theorem*}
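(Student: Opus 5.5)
The plan is to rewrite System \eqref{system} as an equivalent first-order system in the phase variable $U(t)=(x(t),y(t),\lambda(t))\in\mathbb{R}^n\times\mathbb{R}^n\times\mathbb{R}^m$, with $y=\dot{x}$. I will then apply the Cauchy--Lipschitz--Picard theorem for non-autonomous systems whose right-hand side is Lipschitz in $U$ with a locally integrable (in $t$) Lipschitz constant. This is the standard result giving global strong solutions, e.g.\ Haraux's Proposition 6.2.1.

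\textbf{Removing $\ddot x$ from the dual equation.} The first difficulty is that the dual equation involves $\dot z(t)=\dot x(t)+\beta\ddot x(t)$, so it contains $\ddot x$. I will eliminate $\ddot x$ using the primal equation,
\begin{equation*}
\ddot x(t)=-2\sqrt{\mu}\,\dot x(t)-\nabla f\big(x(t)+\beta\dot x(t)\big)-A^\top\lambda(t).
\end{equation*}
Substituting this into $\dot\lambda=\eta(t)\big(Az+\frac{1}{\sqrt\mu}A\dot z-b\big)$ gives
\begin{equation*}
\dot\lambda(t)=\eta(t)\Big[A\big(x+\beta y\big)-b+\tfrac{1}{\sqrt\mu}A\big((1-2\beta\sqrt\mu)\,y-\beta\nabla f(x+\beta y)-\beta A^\top\lambda\big)\Big].
\end{equation*}
The system therefore becomes $\dot U(t)=F(t,U(t))$, where
\begin{equation*}
F(t,x,y,\lambda)=\Big(y,\;-2\sqrt\mu\, y-\nabla f(x+\beta y)-A^\top\lambda,\;\eta(t)\,G(x,y,\lambda)\Big),
\end{equation*}
and $G$ denotes the bracket above.

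\textbf{Verifying the hypotheses.} Since $\nabla f$ is $L$-Lipschitz, the map $(x,y)\mapsto\nabla f(x+\beta y)$ is Lipschitz with constant $L\sqrt{1+\beta^2}$. The remaining terms are linear in $U$ (plus the constant $-b$). Hence there is a constant $K>0$, depending only on $L,\beta,\mu,\|A\|$, such that
\begin{equation*}
\|F(t,U)-F(t,V)\|\le K\big(1+\eta(t)\big)\|U-V\|\quad\text{for all } U,V \text{ and a.e. } t.
\end{equation*}
Because $\eta\in L^1_{loc}([t_0,+\infty))$, the Lipschitz constant $K(1+\eta(\cdot))$ is locally integrable. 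Moreover, for each fixed $U$, the map $t\mapsto F(t,U)$ is measurable and bounded by $C_U(1+\eta(t))$, so it is locally integrable. The theorem then yields a unique global solution $U\in W^{1,1}_{loc}$ (absolutely continuous) with $U(t_0)=(x(t_0),\dot x(t_0),\lambda(t_0))$, and this solution exists on all of $[t_0,+\infty)$ because the linear growth bound excludes blow-up.

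\textbf{Equivalence with System \eqref{system}.} Finally I will check that strong solutions of the two formulations correspond. From $\dot x=y$ with $y$ absolutely continuous, $x$ is $C^1$ and $\ddot x$ exists a.e. The second component of $\dot U=F(t,U)$ is then exactly the primal equation a.e. Consequently $\dot z=\dot x+\beta\ddot x$ a.e., and reversing the substitution recovers the dual equation a.e. Conversely, any strong solution of \eqref{system} solves $\dot U=F(t,U)$, so uniqueness transfers.

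The main (though mild) obstacle is this elimination step: the implicit coupling through $\dot z$ must be resolved so that the right-hand side depends only on $(x,y,\lambda)$, and one must confirm that the resulting Lipschitz modulus $K(1+\eta(t))$ is only locally integrable rather than bounded. This is precisely why the $L^1_{loc}$ version of Cauchy--Lipschitz is used instead of the classical one.
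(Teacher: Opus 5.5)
Your proposal is correct and follows essentially the same route as the paper: set $a=\dot x$, eliminate $\ddot x$ from the dual equation using the primal equation to get the right-hand side $\eta(t)\big[A(x+(\frac{1}{\sqrt{\mu}}-\beta)a)-b-\frac{\beta}{\sqrt{\mu}}A(\nabla f(x+\beta a)+A^\top\lambda)\big]$, verify a Lipschitz bound and a linear-growth bound whose constants are locally integrable through $\eta$, and then invoke Haraux's Proposition 6.2.1. The one thing you add is the explicit check that strong solutions of the first-order and original formulations correspond, which the paper simply asserts when it calls the two systems equivalent.
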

	\begin{proof}
		Let $a(t):=\dot{x}(t)$ and $Z(t):=(x(t), a(t), \lambda(t))$. Then, System \eqref{system} is equivalent to
		$$
		\left\{
		\begin{array}{ll}
			\dot{Z}(t)=G(t,Z(t)),\\
			Z(t_0)=(x(t_0), a(t_0), \lambda(t_0)),
		\end{array}
		\right.
		$$
		where
		\begin{eqnarray*}
			G(t,Z(t))=\begin{pmatrix}
				a(t) \\
				-2\sqrt{\mu}a(t)-\nabla f(x(t)+\beta a(t))-A^\top\lambda(t)\\
				\eta(t)\left[A\Big(x(t) + \big(\frac{1}{\sqrt{\mu}} - \beta\big)a(t)\Big) - b - \frac{\beta A}{\sqrt{\mu}}\big(\nabla f(x(t)+\beta a(t)) + A^\top \lambda(t)\big)\right]
			\end{pmatrix}.
		\end{eqnarray*}
		
		For any $t\geq t_0$, we first show that $G(t,\cdot)$ is $K(t)$-Lipschitz continuous with $K(t)\in L_{\rm loc}^{1}([t_0,+\infty))$. Indeed, for any $Z(t)$ and $\bar{Z}(t)\in\mathbb{R}^n\times\mathbb{R}^n\times\mathbb{R}^m$, we have
		\begin{eqnarray*}
			\begin{split}
				& \|G(t,Z(t))-G(t,\bar{Z}(t))\|\\
				\leq &~ \left(1 + 2\sqrt{\mu} + \Big(\frac{1}{\sqrt{\mu}} - \beta\Big)\eta(t)\|A\| \right) \|a(t)-\bar{a}(t)\| \\
				& + \left(\|A^\top\| + \frac{\beta}{\sqrt{\mu}}\eta(t)\|A A^\top\|\right)\|\lambda(t)-\bar{\lambda}(t)\| +\eta(t)\|A\|\|x(t)-\bar{x}(t)\| \\
				&  +\left(1+\frac{\beta\|A\|}{\sqrt{\mu}}\eta(t)\right)\|\nabla f(x(t)+\beta a(t))-\nabla f(\bar{x}(t)+\beta \bar{a}(t))\|.
			\end{split}
		\end{eqnarray*}
		Since $\nabla f$ is $L$-Lipschitz continuous, the above inequality becomes
		\begin{eqnarray*}
			\begin{split}
				& \|G(t,Z(t))-G(t,\bar{Z}(t))\|\\
				\leq &~ \left(1 + 2\sqrt{\mu} + \Big(\frac{1}{\sqrt{\mu}} - \beta\Big)\eta(t) \|A\| + L\beta\Big(1+\frac{\beta\|A\|}{\sqrt{\mu}}\eta(t)\Big) \right)\|a(t)-\bar{a}(t)\| \\
				& + \left(\|A^\top\| + \frac{\beta}{\sqrt{\mu}}\eta(t)\|AA^\top\|\right)\|\lambda(t)-\bar{\lambda}(t)\|\\
				& +\left( \eta(t)\|A\| + L\Big(1+\frac{\beta\|A\|}{\sqrt{\mu}}\eta(t)\Big) \right)\|x(t)-\bar{x}(t)\| \\
				\leq &~ K(t) \|Z(t)-\bar{Z}(t)\|.
			\end{split}
		\end{eqnarray*}
		where $$ K(t):= \left(1+\frac{1}{\sqrt{\mu}}-\beta + \frac{L\beta(1+\beta)}{\sqrt{\mu}}\right)\eta(t)\|A\| + \|A^\top\| +\frac{\beta}{\sqrt{\mu}}\eta(t)\|AA^\top\| + 1+2\sqrt{\mu}+L(1+\beta).$$ Clearly, $G(t,\cdot)$ is $K(t)$-Lipschitz continuous with $K(t)\in L_{\rm loc}^{1}([t_0,+\infty))$.
		
		Next, for any $t\geq t_0$ and $Z(t)\in\mathbb{R}^n\times\mathbb{R}^n\times\mathbb{R}^m$, we show that there exists $P(t)\in L_{\rm loc}^{1}([t_0,+\infty))$ such that $\|G(t,Z(t))\|\leq P(t)(1+\|Z(t)\|)$. Indeed,
		\begin{eqnarray} \label{Gt}
			\begin{split}
				& \|G(t,Z(t))\|\\
				\leq &~ \left(1 + 2\sqrt{\mu} + \Big(\frac{1}{\sqrt{\mu}} - \beta\Big)\eta(t)\|A\| \right)\|a(t)\| \\
				& + \left(\|A^\top\| + \frac{\beta}{\sqrt{\mu}}\eta(t)\|AA^\top\|\right)\|\lambda(t)\| +\eta(t)\|A\|\|x(t)\| \\
				&  +\Big(1+\frac{\beta\|A\|}{\sqrt{\mu}}\eta(t)\Big)\|\nabla f(x(t)+\beta a(t))\| + \eta(t)\|b\|.
			\end{split}
		\end{eqnarray}
		By the Lipschitz continuity of $\nabla f$, we have $$\|\nabla f(x(t)+\beta a(t))-\nabla f(0)\|\leq L\|x(t)+\beta a(t)\|\leq L\|x(t)\|+L\beta\|a(t)\|.$$ This together with \eqref{Gt} yields
		\begin{eqnarray*}
			\begin{split}
				& \|G(t,Z(t))\|\\
				\leq &~ \left(1 + 2\sqrt{\mu} + \left(\frac{1}{\sqrt{\mu}} - \beta\right)\eta(t)\|A\|+ L\beta \Big(1+\frac{\beta\|A\|}{\sqrt{\mu}}\eta(t)\Big)\right)\|a(t)\| \\
				& + \left(\|A^\top\| + \frac{\beta}{\sqrt{\mu}}\eta(t)\|AA^\top\|\right)\|\lambda(t)\| + \left( \eta(t)\|A\| + L\Big(1+\frac{\beta\|A\|}{\sqrt{\mu}}\eta(t)\Big)\right)\|x(t)\| \\
				& + \eta(t)\|b\| +\Big(1+\frac{\beta\|A\|}{\sqrt{\mu}}\eta(t)\Big)\|\nabla f(0)\| \\
				\leq &~ P(t)(1+\|Z(t)\|),
			\end{split}
		\end{eqnarray*}
		where $P(t):= \left(1+\frac{1}{\sqrt{\mu}}-\beta +\frac{\beta}{\sqrt{\mu}}(L+L\beta + \|\nabla f(0)\|)\right)\eta(t)\|A\| + \|A^\top\| + \frac{\beta}{\sqrt{\mu}}\eta(t)\|AA^\top\| + 1+2\sqrt{\mu}+L(1+\beta)+\|\nabla f(0)\|+ \eta(t)\|b\|$. Clearly, $P(t)\in L_{\rm loc}^{1}([t_0,+\infty))$ and
		$$
		\|G(t,Z(t))\|\leq P(t)(1+\|Z(t)\|),\qquad \forall Z(t)\in\mathbb{R}^n\times\mathbb{R}^n\times\mathbb{R}^m.
		$$
		
		Finally, according to \cite[Proposition 6.2.1]{exist}, for any initial condition $(x(t_0),\dot{x}(t_0),\lambda(t_0))\in\mathbb{R}^n\times\mathbb{R}^n\times\mathbb{R}^m$, System \eqref{system} admits a unique global strong solution. The proof is complete. \qed
		
	\end{proof}

\section*{Funding}
{\small This research  is supported by the Natural Science Foundation of Chongqing (CSTC2025NSCQ-GPX0810 and CSTB2024NSCQ-MSX0651),  the Team Building Project for Graduate Tutors in Chongqing (yds223010). }
\section*{Data availability}
{\small The authors confirm that all data generated or analysed during this study are included in this article.}
\section*{Declaration}
{\small $\mathbf{Conflict ~of~ interest}$ No potential conflict of interest was reported by the authors.}

\bibliographystyle{plain}

\end{document}